\numberwithin{equation}{section}
\definecolor{miazul}{RGB}{0,197,253}
\definecolor{ultramarine}{RGB}{40,72,140}
\definecolor{purple}{RGB}{156,75,119}
\definecolor{miyellow}{RGB}{190,190,30}
\definecolor{salmon}{RGB}{235,126,78}
\definecolor{naranja}{RGB}{250,83,0}
\definecolor{migreen}{RGB}{49,156,85}
\definecolor{darkred}{RGB}{145,6,0}
\newlength{\hatchspread}
\newlength{\hatchthickness}
\newlength{\hatchshift}
\newcommand{\hatchcolor}{}
\tikzset{hatchspread/.code={\setlength{\hatchspread}{#1}},
	hatchthickness/.code={\setlength{\hatchthickness}{#1}},
	hatchshift/.code={\setlength{\hatchshift}{#1}},
	hatchcolor/.code={\renewcommand{\hatchcolor}{#1}}}
\tikzset{hatchspread=3pt,
	hatchthickness=0.4pt,
	hatchshift=0pt,
	hatchcolor=black}
\tikzset{fontscale/.style = {font=\relsize{#1}}
}
\newcommand{\eps}{\varepsilon}
\newcommand{\R}{\mathbb R}
\newcommand{\E}{\mathbb E}
\renewcommand{\P}{\mathbb P}
\newcommand{\poisson}{\mathcal{P}}
\newcommand{\simone}{\overset {1-\text{edge}}{\longleftrightarrow} }
\newcommand{\simtwo}{\overset {2-\text{edge}}{\longleftrightarrow} }
\newcommand{\simk}{\overset {k-\text{edge}}{\longleftrightarrow} }
\newcommand{\usl}{\underset{^{_{\log}}}\asymp}
\newcommand{\usll}{\underset{^{_{\log}}}\lesssim}
\renewcommand{\complement}{{\sf c}}
\renewcommand{\phi}{\varphi}
\newcommand{\N}{\mathbb N}
\renewcommand{\P}{\mathbb P}
\newcommand{\vertiii}[1]{{\left\vert\kern-0.25ex\left\vert\kern-0.25ex\left\vert #1 
    \right\vert\kern-0.25ex\right\vert\kern-0.25ex\right\vert}}
\newcommand\cD{\mathcal D}
\newcommand\cE{\mathcal E}
\newcommand\cP{\mathcal P}
\newcommand\cS{{\mathcal S}}
\newcommand{\de}{\mathrm{d}} 
\newcommand{\x}{\mathbf{x}}
\newcommand{\y}{\mathbf{y}}
\newcommand{\z}{\mathbf{z}}
\newcommand{\network}{\mathscr{G}}
\numberwithin{equation}{section}
\newtheorem{theorem}{Theorem}
\newtheorem{shared}{Dummy}[section]  
\newtheorem{definition}[shared]{Definition}
\newtheorem{lemma}[shared]{Lemma}
\newtheorem{corollary}[shared]{Corollary}
\newtheorem{proposition}[shared]{Proposition}
\newtheorem{remark}[shared]{Remark}
\newtheorem{conjecture}[shared]{Conjecture}
\title[Crossing probabilities in geometric  inhomogeneous random graphs]
{Crossing probabilities in  geometric inhomogeneous random graphs}
\author[Emmanuel Jacob, Céline Kerriou, Amitai Linker and Peter M\"orters]{Emmanuel Jacob, Céline Kerriou, Amitai Linker and Peter M\"orters}
\pgfplotsset{compat=1.18}
\begin{document}
\maketitle

\vspace{-.8cm}
\medskip

\begin{quote}
{\small {\bf Abstract:} {In  a geometric inhomogeneous random graph vertices are given by the points of a Poisson process and are equipped with independent weights following a heavy tailed distribution.  Any pair of distinct vertices is independently forming an edge  with a probability decaying as a function of the product of the weights divided by the distance of the vertices. For this continuum percolation model we study the crossing probabilities of annuli, i.e.\ the probabilities that there exist paths starting inside a ball and ending outside a larger concentric ball with increasing inner and outer radii.
Depending on the radii, 
the power-law exponent of the degree distribution and the decay of the probability of long edges, we identify regimes where the crossing probabilities by a path are equivalent to the crossing probabilities by one or by two edges.
We also identify the escape probabilities from balls with strong centre, i.e.\ the asymptotics of the probability that there exists a path starting from a vertex with a given weight leaving a centred ball as radius and weight are going to infinity. 
As a corollary we get the subcritical one-arm exponents characterising the decay of the probability that a typical point is in a component not contained in a centred ball whose radius goes to infinity.}}
\end{quote}

\tableofcontents
\vspace{-0.5cm}
\section{Introduction and main results}

\subsection{Motivation}
In general continuum percolation models, vertices are given by the points of a homogeneous Poisson point process on $\mathbb R^d$
and edges are formed randomly according to some translation invariant probability law leading to finite expected vertex degrees.
Phase transitions in these models express themselves in a sudden qualitatively change of the
graph topology when the intensity of the underlying point process crosses a positive and finite critical value. This change of behaviour arises not only in the dichotomy of existence of an infinite, dense connected component for supercritical densities versus only finite components for subcritical densities, but also in various other forms - such as the existence or absence of a path comprising many different edges, or the existence or absence of a path
crossing an annulus of large width. The latter is of particular interest because of its local and quantitative nature. If the crossing probability of an annulus with fixed ratio of inner and outer radius goes to zero as the width goes to infinity, then this fact can be used to initiate  renormalization arguments, directly showing many features characterising subcritical behaviour - such as finite expected component size and exponential decay of the probability that there is a path of length~$n$ starting at a typical point, see Gou\'er\'e and Th\'eret~\cite{G08, GT19}. We hence speak 
of the \emph{quantitatively subcritical phase} of the percolation model.
\smallskip

In the classical continuum percolation model of Gilbert~\cite{Gil} vertices are connected by an edge if their distance lies below a fixed threshold value. In this model all phase transitions occur at the same critical intensity and the transitions are sharp. In particular, 
for subcritical densities the probability that there is a path from a point inside to a point outside of an  annulus decays exponentially in the width of the annulus, while
for supercritical  densities the number of vertex-disjoint paths crossing the annulus grows at least like a power of the width~\cite{tanemura, faggionato}, see \cite{armstrong2023} for related analytic results. At criticality, crossing probabilities in two-dimensional percolation have rich relations to conformal field theory and stochastic Loewner evolution and have been central to progress in percolation theory in the last twenty years, see for example~\cite{Cardy, Schramm} for lattice models and \cite{Mertens} for nonrigorous results on continuum percolation. 
\smallskip%

For continuum percolation models with sufficiently heavy-tailed edge length distribution or with a power-law degree distribution one cannot expect exponential decay of the annulus crossing probabilities for subcritical densities. In the former case 
with only polynomially decaying probability a single edge can cross the annulus and in the latter case a single strong vertex inside the annulus can be adjacent to an edge connecting it to the 
outside, or vice versa. Therefore crossings using only one edge give polynomial lower bounds for the subcritical crossing probabilities. In~\cite{AHL_2020__3__677_0} Duminil-Copin et al study the Poisson-Boolean model, a percolation model which has 
power-law degree distributions.
Under a moment assumption they show that for subcritical densities, the crossing probability of an annulus with fixed ratio of inner and outer radius is asymptotically equivalent to the probability that a single edge crosses the annulus. See also the recent work of Dembin and Tassion~\cite{Dembin2022}, where the moment condition is relaxed.
\smallskip

In this paper we consider a different long-range percolation model often called the \emph{geometric inhomogeneous random graph}. In this model,  vertices are placed according to a homogeneous Poisson point process in space and each vertex carries an independent weight. Given vertices and weights, edges are created independently between vertices with a probability given as a function 
depending on the product of the weights of the adjacent vertices and their distance in space. 
Choosing heavy-tailed weight distributions and a
slowly decreasing profile leads to a power-law degree distribution and heavy-tailed edge length distribution.
We study this model in the quantitatively subcritical phase.
\smallskip

\pagebreak[3]

Recent work of Jacob et al \cite{jacob2025} gives criteria for existence and absence of a quantitatively subcritical phase. It turns out that when the probability that an annulus with proportional inner and outer radius is crossed by a single edge does not go to zero, this is not enough to guarantee the presence of an infinite component. In this case there is a subcritical, but no quantitatively subcritical phase. Conversely, if there is a quantitatively subcritical phase, the probability that such annuli are crossed by a single edge goes to zero with a rate that does not depend on the intensity of the underlying Poisson process. In this case the natural conjecture that the subcritical and quantitatively subcritical phase agree is still an open problem, which we do not address here. 
\smallskip

Here we  focus on the quantitatively subcritical phase and give
up-to-constants matching bounds for the crossing probabilities
of annuli with outer radius growing faster than the inner radius. These up-to-constants bounds are of polynomial order in the radii and do not depend on the intensity of the underlying Poisson process
as long as it is quantitatively subcritical.
In particular we show that, if the tail of the long edge probability is heavy relative to the tail of the degree distribution, the order of the crossing probability is that of the crossing by a single edge, but if it is light, then 
it depends on the 
rate of divergence of the inner radius compared to the outer radius whether we see the one-edge or two-edge asymptotics. Moreover, depending on the power-law exponent of the degree distributions the order of the crossing probabilities may or may not depend on the  tail of the long edge probabilities. We also obtain up to constants bounds for the escape probabilities from balls with a strong centre, 
i.e.\ the probability that there exists a path from a vertex to the outside of a ball around that vertex as the weight of the centre and the radius of the ball go to infinity. As before we see different parameter regimes in which paths with one or two edges determine the escape probabilities. Finally, as a corollary of our main results, we get the subcritical one-arm exponents, which describe the polynomial rate at which the probability that a typical point is in a component not contained in a centred ball goes to zero as the radius goes to infinity. Again, these exponents do not depend on the underlying intensity as long as it is quantitatively subcritical.

\subsection{Background and description of the model }
\label{sec:background}
We consider the \emph{geometric inhomogeneous random graph}
\cite{BRINGMANN201935}, also known as weight-dependent random connection model~\cite{GLM21, Grac} or kernel-based spatial random graph~\cite{JKM24, JKM}  with factor kernel, or as continuum scale-free percolation model \cite{DHH13, Dep} in the literature.
In this model, vertices are given by a homogeneous Poisson point process $\poisson$ on $\R^d\times(0,1]$ with intensity $\lambda>0$,  which represents the density of vertices in the model. We interpret a point $\x =(x,u)\in \poisson$  of this Poisson point process as having position $x\in \R^d$ and mark $u$ uniform in $(0,1]$. Whenever we want to emphasize the role of $\lambda$ we denote the underlying probability measure by \smash{$\mathbb P^{(\lambda)}$}.
\smallskip

The random graph $\network = (\poisson, E)$ has  vertex set $\poisson$ and, given the vertex set, the edge set $E$ is defined such that for every pair of vertices  $(x,u),(y,v)$ we sample the edge $((x,u), (y,v))$, denoted by $(x,u)\sim (y,v)$, independently with probability 
\begin{align*}
\rho(|y-x|^d(uv)^{\gamma}),
\end{align*}
where $0\leq\gamma<1$ 
is a parameter regulating the inhomogeneity of the model and $\rho\colon [0,\infty)\to [0,1]$ is a non-increasing \emph{profile function} satisfying
\begin{align*}
0<	\int_{\R^d} \rho(|x|^d)\, dx <\infty.
\end{align*}
In this paper, we consider both hard and soft profile functions. More precisely, in the hard model we choose $\rho$ of the form	$\rho= 1_{[0,r_0]}$, for some positive constant $r_0$, in which case an edge $(x,u) \sim (y,v)$ exists if and only if 
\[
|y-x|^d\le r_0 u^{-\gamma}v^{-\gamma}.
\]
In the soft model,  we choose $\rho$  such that there are constants $0<{\mathtt c}, {\mathtt C}<\infty$ such that 
\begin{align}\label{eq:conditions_rho}
    {\mathtt c}x^{-\delta} \leq  \rho(x) \leq {\mathtt C} x^{-\delta} \quad\text{  for all  }x\ge1,
\end{align}
for a \emph{decay exponent} $\delta>1$.
We characterise the hard model by formally setting the  decay exponent to~$\delta=\infty$. In the case $\gamma=0$ we formally set $1/\gamma=\infty$ and only consider the soft model, which then becomes the continuum long-range percolation or random connection model~\cite{Pen, burton}. Although our results are also new in this case, our main focus in this paper is on the case $\gamma>0$, where the model is inhomogeneous with a degree distribution of power-law type with exponent \smash{$\tau=1+\frac1\gamma$}.
An alternative interpretation of the very same model is that every vertex $(x,u)$ gets an independent Pareto-distributed weight $W_x=u^{-\gamma}$ and vertices in location $x,y$ are connected with probability
\smash{$\rho(\!\!$ {\scriptsize $\frac{|x-y|^d}{W_xW_y}$} $\!\!)$}.
\medskip

We write $B(x,r) \subset \R^d$ to denote the open ball of radius $r>0$ centred at $x$ and $B(x,r)^{\complement} = \R^d \setminus B(x,r)$ to denote its complement. 
Given two disjoint sets $A,B \subset \R^d$ we write 
$A \longleftrightarrow B$ if there exists a path 
with endpoints in $A$, resp.~$B$. 
Here and throughout the paper paths are assumed to be self-avoiding.
Given $k\in \N$ we write
\[A^{_{\, \simk}} B,\]
if there is such a path with at most $k$ edges. In particular, $B(x,r)^{_{\simone}} B(x,r')^\complement$ means that exists an edge with endpoints in $B(x,r)$ and $B(x,r')^{\complement}$. 
Using this notation we define the probabilities of crossing an annulus,
\begin{align*}
\theta_r&:=\theta_r(\lambda)=\P^{(\lambda)}\big(B(0,r)\longleftrightarrow  B(0,2r)^\complement\big), 
\\ \pi_r&:=\pi_r(\lambda)=\P^{(\lambda)}\big(B(0,r)^{_{\simone}} B(0,2r)^\complement\big),
\end{align*}
for $r>0$. For notational simplicity, we will write the dependence in $\lambda$ only when necessary. Next, we define the critical parameter
\begin{align*}
	\widehat{\lambda}_c:= \sup\{\lambda\ge 0, \lim_{r\to \infty}\theta_r(\lambda)= 0\}.
\end{align*}
The regime $0<\lambda<\widehat{\lambda}_c$ is the \emph{strongly} or \emph{quantitatively subcritical phase} when, loosely speaking, looking at large scale (i.e.\ a ball with radius of order $r$) it is unlikely that there exists a component of macroscopic size (i.e.\ a component of diameter larger than $r$) in this~ball.
Of course, \smash{$\widehat{\lambda}_c$} is 
{no larger} 
than the classical critical parameter 
\begin{align*}
	\lambda_c:=\sup\big\{\lambda\ge 0: \, \lim_{r\to \infty} \P^{(\lambda)}\big(B(0,1)\longleftrightarrow B(0,r)^\complement\big)= 0\big\}.
\end{align*}
In \cite[Section 2]{jacob2025} the authors consider a general model that contains our model when considering the same parameters $\delta$ and $\gamma$ as we do, and $\alpha=\gamma$ as their third parameter. We now summarize their results in our context. They show in Theorem 2.1 that the behaviour of $\theta$ is governed by an exponent $\zeta$ that we get from their Proposition 2.3: 
\begin{enumerate}
    \item If $\delta>2$ and $\gamma<1/2$, then
    $\displaystyle
        \zeta=\max\left\{2-\delta,2-\frac 1 \gamma\right\}<0$.\\[1mm]
    \item If  $\delta<2$ or $\gamma>1/2$, then
    $\displaystyle
    \zeta=\max\left\{2-\delta,\frac {2\gamma-1}{2\gamma-1/\delta}\right\}>0.
    $\\[2pt]
    
    \item Otherwise, namely if $\delta=2$ and $\gamma\le 1/2$, or $\gamma=1/2$ and $\delta\ge 2$, then $\zeta=0$.
\end{enumerate}
\smallskip

\noindent
In the case $\zeta>0$, we have $\widehat \lambda_c=0$ and there is no quantitatively subcritical phase. We will show in Lemma~\ref{lem:quant} that in our model this also applies if 
$\zeta=0$, i.e.\  when
$\gamma=\frac12$ or $\delta=2$. In the case $\zeta<0$, we have \smash{$\widehat\lambda_c>0$}, still from their Theorem 2.1,  as well as 
\[
\limsup_{r\to\infty} \frac {\log \theta_r}{\log r}=d \zeta, \quad  
\text{ for all } 0<\lambda<\widehat\lambda_c.  
\]
By carefully looking at their proof, we can see that the limsup is a true limit, and thus 
\[
\theta_r=r^{d \zeta (1+o(1))} \text{ as } r\to \infty, \quad  
\text{ for all } 0<\lambda<\widehat\lambda_c.
\]
In this paper, we are interested in the quantitatively subcritical phase, so we always assume that $\delta>2$ and $\gamma<1/2$ in order to have \smash{$\widehat \lambda_c>0$}. Moreover, we will mainly study the regime \smash{$0<\lambda<\widehat \lambda_c$}. In this context, our aim is to provide more precise bounds on the crossing probabilities, not only of an annulus with outer and inner radius of a fixed ratio, as in the definition of~$\theta_r$, but also of annuli where the ratio between outer and inner radius goes to infinity. For such annuli we ask whether crossings by one, two or perhaps a larger number of edges are still asymptotically equally  probable as crossings with  arbitrarily long paths.
\medskip

\paragraph{\textbf{Notation}}
We reserve the word \emph{domain} for Borel subsets
\begin{align} \label{def:domain_D}
    D \subseteq \mathcal D:=\{ (u,r, R) \in (0,1]\times [2,\infty) \times [4,\infty) \colon  R\ge 2r\}.
\end{align}
Domains are characterised by stating the additional restrictions on $(u,r,R)\in\mathcal D$.
If there are no additional restrictions on a variable or a variable is not used in the given context we may omit it from the notation, writing for example $(r,R)\in\{ r> R^\epsilon\}$ if
$ (r, R) \in [2,\infty) \times [4,\infty)$ satisfy $  R/2 \geq r>R^\epsilon$ and the value of $u\in (0,1]$ plays no role. {We define $D^{\complement}$ to be $\mathcal{D}\setminus D$.}
Given two functions $f,g\colon D \to \R$ defined on a domain $D$, we write 
\begin{align*}
f(u,r,R) & \lesssim g(u,r,R) \text{ on }D, 
\end{align*}
if there exist $C, R_0 >1$ such that $f(u,r,R) \leq C g(u,r,R)$ for all $(u,r,R)\in D$ with $R\ge R_0$. We write 
\begin{align*}
f(u,r,R) & \asymp g(u,r,R) \text{ on } D,
\end{align*}
if both $f(u,r,R) \lesssim g(u,r,R)$ and $g(u,r,R) \lesssim f(u,r,R)$ on $D$. If no domain $D$ is mentioned the equivalence is supposed to hold on all of $\mathcal D$.
\pagebreak[3]

\subsection{Main results}



Recall that we assume that $\delta>2$ and $\gamma<1/2$, so that there exists a quantitatively subcritical phase, namely \smash{$\widehat\lambda_c>0$}. In this context, as mentioned above, for any \smash{$\lambda<\widehat\lambda_c$} we have the crossing probability decay 
\[
\theta_r=r^{d \zeta (1+o(1))} \text{ as } r\to \infty,
\]
with $\zeta$ defined as in Section~\ref{sec:background}. 
In \cite{jacob2025}, this result is obtained as a consequence of a similar result for the  
one-edge crossing probability $\pi_r$. By a  slight refinement of their argument, we obtain the following result, which we prove in Section~\ref{sec:Proof_of_Lemma_1.1}.

\begin{proposition}\label{prop:annulus_r_2r}
    In the quantitatively subcritical phase $0<\lambda<\widehat \lambda_c$, we have
    \[
    \theta_r \asymp \pi_r.
    \]
\end{proposition}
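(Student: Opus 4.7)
The inequality $\pi_r \le \theta_r$ is immediate from the definitions, since a one-edge crossing of the annulus $B(0,2r)\setminus B(0,r)$ is a particular path crossing. The content of the proposition therefore lies in the reverse inequality $\theta_r \lesssim \pi_r$, which strengthens the logarithmic asymptotic $\theta_r = r^{d\zeta(1+o(1))}$ obtained in \cite{jacob2025} to an up-to-constants bound.

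My plan is to first compute $\pi_r$ up to constants. Using Mecke's formula for the Poisson point process $\poisson$ and the fact that for small probabilities $1-e^{-p}\asymp p$, I would establish
\[
\pi_r \asymp \lambda^2 \int_{B(0,r)} \de x\int_{B(0,2r)^{\complement}} \de y\int_0^1 \int_0^1 \rho\big(|x-y|^d(uv)^{\gamma}\big)\, \de u\, \de v,
\]
and then evaluate this integral directly, splitting the domain of integration according to whether $|x-y|^d(uv)^{\gamma}$ is bigger or smaller than $1$, i.e.\ whether the edge probability is in the polynomial decay regime or saturates. Using the bounds $\rho(t)\asymp\min(1,t^{-\delta})$ from \eqref{eq:conditions_rho}, the resulting integrals are elementary and, in each of the three regimes for $\zeta$ recalled in Section~\ref{sec:background}, they yield $\pi_r\asymp r^{d\zeta}$. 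The identification of the dominant region in each regime is exactly the case analysis already present implicitly in \cite{jacob2025}, except that here I would keep multiplicative constants rather than only powers of $r$.

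For the upper bound on $\theta_r$, I would refine the renormalization/multi-scale scheme in \cite{jacob2025}. In that scheme one decomposes a crossing of $B(0,r)$ to $B(0,2r)^{\complement}$ into contributions coming from crossings that use either a single long edge, or a short path meeting a long edge, or progressively finer structures. Using the explicit computation of $\pi_r$ from the first step as a base case, together with the quantitative subcriticality $\theta_r\to 0$, I would bound each such structured contribution by a constant multiple of $\pi_r$. The refinement is to choose the scales so that, when one sums over all scales, the contributions form a geometric series with total at most $C\pi_r$.

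The main obstacle is this last step: the original argument produces an $r^{o(1)}$ slack because a constant factor is lost at each of the $\Theta(\log r)$ scales, and summing these naively gives only a subpolynomial factor. Closing this gap requires choosing the decomposition so that at each successive scale the conditional probability that a crossing uses a finer structure rather than one already accounted for is summable, with the summability depending on the explicit scaling of $\pi_r\asymp r^{d\zeta}$ verified in the first step. This is the only place where the quantitative form $\pi_r\asymp r^{d\zeta}$ (as opposed to the mere $r^{d\zeta(1+o(1))}$) is actually used.
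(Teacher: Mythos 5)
The lower bound $\pi_r\le\theta_r$ and the idea of leaning on the renormalization scheme of \cite{jacob2025} are fine, but your proposal does not actually contain a proof of the upper bound $\theta_r\lesssim\pi_r$: you describe a multi-scale decomposition, identify the obstacle (a constant lost at each of $\Theta(\log r)$ scales), and then assert that it can be overcome by ``choosing the decomposition so that the contributions are summable'', without giving the mechanism that would make this work. That missing mechanism is precisely the content of the proposition, so as written this is a gap rather than a proof. Moreover, the actual argument requires no summation over scales at all. The inequality (15) of \cite{jacob2025} is a \emph{single-scale} bound $\theta_r\le \P(\mathcal{L}(2r,1/20))+C_1\theta_{r/10}^2$, where $\mathcal{L}(r,c)$ is the event that some edge of length at least $cr$ has an endpoint in $B(0,r)$. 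Since \cite{jacob2025} already gives $\theta_r=r^{d\zeta(1+o(1))}$ and $\P(\mathcal{L}(2r,1/20))=r^{d\zeta(1+o(1))}$ with $\zeta<0$ throughout the quantitatively subcritical phase, the quadratic error term decays like $r^{2d\zeta(1+o(1))}$ and is therefore of strictly smaller polynomial order than the one-edge term; hence $\theta_r\lesssim\P(\mathcal{L}(2r,1/20))$, and a comparison of $\mathcal{L}$-events at proportional scales together with the scaling relation $\pi_{r/30}\asymp\pi_r$ finishes the proof. In other words, the quadratic structure of the renormalization inequality, combined with only the \emph{logarithmic} asymptotics of $\theta_r$, already kills the error term — no tracking of constants across scales, and no up-to-constants evaluation of $\pi_r$, is needed.

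Two smaller inaccuracies in your first step: the up-to-constants identity $\pi_r\asymp r^{d\zeta}$ is false in general, since for $\delta\ge 1/\gamma$ the one-edge crossing probability carries logarithmic corrections (e.g.\ $\pi_r\asymp r^{d(2-1/\gamma)}\log r$ for $\delta>1/\gamma$, cf.\ Proposition~\ref{prop:asymp_escape_crossing_probabilities}); and, relatedly, your closing claim that the quantitative form of $\pi_r$ ``is actually used'' to close the gap is misplaced — in the correct argument the precise value of $\pi_r$ plays no role, only the comparison $\P(\mathcal{L}(2r,1/20))\lesssim\pi_{r/30}\asymp\pi_r$ and the sign of $\zeta$.
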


\begin{remark}
    Instead of $0<\lambda<\widehat \lambda_c$ the proof only uses the (potentially weaker) assumption that $\theta_r(\lambda)\to0$ as $r\nearrow\infty$. Under this assumption the precise bounds for $\pi_r$ given in Section~\ref{sec:one_two_crossing_bounds}
    give the up-to-constants asymptotics of $\theta_r$. The sharpness of the phase transition arises from the fact that for $\lambda<\widehat \lambda_c$ the crossing probability $\theta_r$ decays polynomially with a rate not depending on $\lambda$, whereas for $\lambda>\widehat \lambda_c$ 
    it remains of order one.  
\end{remark}

Our main interest is to consider thicker annuli, so we generalize the definition of $\theta_r$ to
\begin{align*}
	\theta_{r,R}:= \P(B(0,r)\longleftrightarrow \null B(0,R)^\complement),
\end{align*}
and that of $\pi_r$ to
\begin{align*}
	\pi_{r,R}:=\P(B(0,r)^{_{\simone}}  B(0,R)^\complement),
\end{align*}
for any $R\geq r>0$. Note that $\pi_{{r}}=\pi_{r,2r}$. 
We also define the two-edge  crossing probability, that is the probability of having a crossing path with at most two edges, by 
\begin{align*}
	\pi^{(2)}_{r,R}:= \P(B(0,r)^{_{\simtwo}} \null B(0,R)^\complement).
\end{align*}
While, for all $\lambda>0$, we have
\begin{equation}\label{eq:annulus_r_2r_crossing_with_one_or_two_edges}
    \pi_{r,2r}^{(2)}\asymp \pi_{r,2r},
\end{equation}
the analogous statement for thicker annuli does not necessarily hold if the inner radius $r$ is of smaller order than the  outer radius $R$. Bounding $\theta_{r,R}$ therefore involves the study of the two-edge crossing events. Further, we also state bounds for the probability of having a path from one fixed point to the outside of a ball of radius $R$.
Given a point $\x = (x,u) \in \R^d \times (0,1]$, we define the rooted graph $(\network, \x)$ by first constructing $\network$, and then, if $\x \notin \network$, adding~$\x$ to the vertex set and sampling its adjacent edges as usual. Given a set $A \subseteq \R^d$, we write $\x \longleftrightarrow A$ if there exists a finite-length path in $(\network, \x)$ from the root~$\x$ to a vertex in $A \times (0,1]$.  
We write
\[ \x^{_{\, \simk}} A,\]
if there is such a path with at most $k$ edges. 

For $u\in (0,1]$ we define the \emph{escape probabilities} from a ball with radius $R>0$ as
\begin{align*}
	\theta_{(u), R} & 
    :=\P((0,u)\longleftrightarrow  B(0,R)^\complement),\\
	\pi_{(u), R} & := \P((0,u)^{_{\simone }} B(0,R)^\complement),\\
	\pi^{(2)}_{(u), R} & := \P((0,u)^{_{\simtwo}}  B(0,R)^\complement).
\end{align*}
The idea behind these definitions is twofold: On the one hand, by averaging $\theta_{(u), R}$ over a uniform random mark~$u$
we get the escape probabilities from a ball centred in a typical point. On the other hand, by evaluating $\theta_{(u), R}$ for small deterministic values of~$u$ we check the effect of a strong vertex planted at the origin on the connectivity at scale $R$. This will be the essential tool to get lower bounds on the subcritical one-arm exponents in Theorem~\ref{thm:subcritical_one_arm_exponent}.%
\smallskip%

The following result, which is our main theorem, 
shows that in the quantitatively subcritical phase \emph{two} edges 
cross wide annuli or escape from balls with a strong centre with the same asymptotic probability as arbitrary paths. Recall that $\delta>2, \gamma<\frac12$ are necessary and sufficient conditions for the existence of a quantitatively subcritical phase.

\begin{theorem}\label{thm:two-edges}
    For the geometric inhomogeneous random graph  with $0\leq\gamma<\frac{1}{2}$ and either a hard profile and $\gamma\not=0$ or a soft profile with $\delta>2$ and with intensity \smash{$0< \lambda < \widehat{\lambda}_c$}, we have for all $\epsilon>0$, 
  \begin{align}
      \theta_{r, R} & \asymp \pi_{r, R}^{(2)}
      & \text{ on } \{  r \geq  R^\epsilon\},
      \label{eq:asymp_theta_r_R} \\[2mm]
      \theta_{(u), R} & \asymp 
                 \pi_{(u), R}^{(2)} 
                 & \text{ on } \{u \leq R^{-\epsilon}\}.
     \label{eq:asymp_theta_u_R}
  \end{align}  
\end{theorem}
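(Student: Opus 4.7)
For the lower bounds, the inclusions $\{B(0,r) \simtwo B(0,R)^\complement\} \subseteq \{B(0,r) \longleftrightarrow B(0,R)^\complement\}$ and the analogous inclusion for escape events give $\theta_{r,R} \geq \pi^{(2)}_{r,R}$ and $\theta_{(u),R} \geq \pi^{(2)}_{(u),R}$ at once, so the heart of the theorem is the matching upper bounds.

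My plan is a \emph{hub-vertex decomposition} combined with inductive bootstrapping. Given any crossing path $\gamma=(\x_0,\ldots,\x_n)$ from $B(0,r)$ to $B(0,R)^\complement$, I would identify its \emph{hub} $\x^{*}=(x^{*},u^{*})$ as the vertex of $\gamma$ with the smallest mark (equivalently the largest weight). Partitioning the crossing event according to $\x^{*}$ and applying the multivariate Mecke formula for the Poisson process yields a bound of the form
\[
\theta_{r,R} \;\lesssim\; \int_{\R^d\times(0,1]} \lambda\,dx^{*}\,du^{*}\; \Phi_{r}(\x^{*})\,\Phi^{R}(\x^{*}),
\]
where $\Phi_r(\x^{*})$ is the probability that $\x^{*}$ connects to $B(0,r)$ via a path all of whose vertices have mark at least $u^{*}$, and $\Phi^R(\x^{*})$ is the analogous quantity for $B(0,R)^\complement$.

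The crux is to dominate each of $\Phi_r$ and $\Phi^R$ by the one-edge probabilities $\P(\x^{*}\sim B(0,r))$ and $\P(\x^{*}\sim B(0,R)^\complement)$; after integrating over $\x^{*}$ this recovers $\pi^{(2)}_{r,R}$ up to constants. This required domination is itself an instance of the escape-probability assertion \eqref{eq:asymp_theta_u_R} applied to the subgraph of marks $\geq u^{*}$, so the argument is self-referential and I would close it by induction on the outer radius $R$. The base case $R=2r$ is supplied by Proposition~\ref{prop:annulus_r_2r} together with \eqref{eq:annulus_r_2r_crossing_with_one_or_two_edges}. For the inductive step the restricted subgraph is still quantitatively subcritical (since removing vertices cannot increase connectivity), and the sub-paths inside $\Phi_r,\Phi^R$ reach across outer scales strictly smaller than $R$, so the inductive hypothesis applies. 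The escape-probability statement \eqref{eq:asymp_theta_u_R} itself is handled by the same hub decomposition, starting from the deterministic strong vertex $(0,u)$ in place of the ball $B(0,r)$.

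The main obstacle will be closing the inductive loop rigorously. Bounding the restricted-subgraph probabilities in a form compatible with the induction requires either a BK-type disjoint-occurrence argument (to separate the two sub-paths flanking the hub while avoiding double-counting $\x^{*}$) or a direct first-moment computation for the expected number of paths, and either way the bound must be uniform across the different parameter regimes for $(\gamma,\delta)$. The domain restrictions $\{r\geq R^\epsilon\}$ and $\{u\leq R^{-\epsilon}\}$ presumably enter precisely at this step to guarantee that the two-edge contribution dominates the contribution from paths with three or more edges; outside these regimes the dominant mechanism would change and $\pi^{(2)}_{r,R}$ would no longer be sharp.
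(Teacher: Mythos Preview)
Your inductive loop does not close, for two linked reasons. First, the statement you are trying to invoke as the induction hypothesis, \eqref{eq:asymp_theta_u_R}, asserts $\theta_{(u),R}\asymp\pi^{(2)}_{(u),R}$, \emph{not} $\theta_{(u),R}\asymp\pi_{(u),R}$. So applying it to $\Phi_r$ and $\Phi^R$ does not give one-edge bounds; it gives two-edge bounds, and integrating $\pi^{(2)}\cdot\pi^{(2)}$ over the hub position produces a four-edge quantity, not $\pi^{(2)}_{r,R}$. You would need a separate lemma that this four-edge integral is $\lesssim\pi^{(2)}_{r,R}$, which is not automatic (the paper proves only the three-edge analogue, Lemma~\ref{lem:integral_edge_crossing}, and it requires work). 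Second, and more basic, the scales do not shrink: if the hub $\x^*$ lies in $B(0,r)$---which happens whenever the first vertex of the path is the strongest one---then $\Phi_r$ is trivial and $\Phi^R$ is a full crossing of scale $R$, so no smaller-$R$ hypothesis is available. Even for $\x^*$ in the annulus, one of $\Phi_r,\Phi^R$ has scale at least $R/2$. An induction on $R$ therefore cannot be grounded this way. Restricting to the subgraph of marks $\ge u^*$ does not rescue this: that subgraph is not a GIRG with the same parameters (the mark law is no longer uniform on $(0,1]$), so \eqref{eq:asymp_theta_u_R} does not apply to it directly, and the constants would depend on $u^*$ in an uncontrolled way.

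The paper's route is quite different. It decomposes not by the strongest vertex but \emph{radially}, by the first vertex $x_P$ of the path to exit $B(0,2r)$ and its successor $y_P$ (Lemma~\ref{lem:partition_upper_bound}). One of the five resulting cases gives, via BK, a product $\theta_{r,2r}\,\theta_{8r,R}$; the others give one-edge terms and first-moment integrals of the form $\int \pi_{r,(x,v)}\,\theta_{(v),R/2}$. The payoff (Proposition~\ref{prop:bootstrapping}) is an implication
\[
\big(\theta_{r,R}\lesssim\pi^{(2)}_{r,R}+\cE_R\text{ on }\mathcal D\big)\ \Longrightarrow\ \big(\theta_{r,R}\lesssim\pi^{(2)}_{r,R}+\pi_r\,\cE_R\text{ on }\mathcal D\big),
\]
so each pass multiplies the error by $\pi_r\to0$. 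Starting from $\cE_R=\pi_R$ (the naive bound of Section~\ref{subsec:naive_upper_bound}), a finite number of iterations of this bootstrap enlarges the domain on which the error is absorbed into $\pi^{(2)}_{r,R}$ until it covers $\{r\ge R^\epsilon\}$ (Lemma~\ref{lem:ksuffices}). The iteration is on \emph{domains}, not on $R$, which is precisely what avoids the scale-reduction problem your scheme runs into.
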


\begin{remark}In the quantitatively subcritical phase, two-edge paths asymptotically yield the same crossing probabilities as paths with an arbitrary number of edges, irrespective of the relative growth rates of the inner and outer radii or the weight of the central vertex. In this phase the crossing probabilities $\theta_{r,R}$ decay polynomially with an order not depending on \smash{$0<\lambda<\widehat \lambda_c$}, whereas in the supercritical phase $\lambda>\lambda_c$ the $\theta_{r,R}$ are all of order one.  
\end{remark}
\begin{remark}Recall that Proposition~\ref{prop:annulus_r_2r} states that in the quantitatively subcritical phase we have $ \theta_{r,2r}\asymp \pi_{r,2r}$, which already gives our result for that particular choice of $R$, since a trivial inclusion of events implies $\pi_{r,R}\le \pi_{_{r,R}}^{_{(2)}}\le \theta_{r,R}$,
thus giving $ \theta_{r,2r}\asymp \pi^{_{(2)}}_{^{r,2r}}$. Conversely, we can recover Proposition~\ref{prop:annulus_r_2r} from our result by using~\eqref{eq:annulus_r_2r_crossing_with_one_or_two_edges}.\end{remark}

We find explicit up-to-constant bounds for the one and two-edge crossing probabilities, which combined with Theorem~\ref{thm:two-edges} gives a full picture showing when either 
one or two edges suffice to cross wide annuli with the same asymptotic probability as arbitrary paths. 

\begin{theorem}[Annulus crossing probabilities]\label{thm:main_R_alpha_R}
    For 
   $0\leq\gamma<\frac{1}{2}$
    and
    $0< \lambda < \widehat{\lambda}_c,$ we have, for 
    all $0< \epsilon < 1$, that:
    \smallskip   
    \begin{enumerate}
        \item If $2< \delta< 1/\gamma -1$, then 
        \begin{align*}
            \theta_{r, R} &  \asymp \pi_{r,R} \asymp r^dR^{d(1-\delta)} \qquad\text{ on } \{  
    r \geq  R^\epsilon\}.
        \end{align*}
        \item If $1/\gamma - 1< \delta < 1/\gamma$, then 
        \begin{align*}
            \theta_{r, R}& \asymp  \begin{cases}
            \pi_{r, R} 
            \asymp r^dR^{d(1-\delta)} & \text{ on } \{
            r \geq  R^{1+\delta-\frac 1 \gamma} \},\\                 \pi_{r, R}^{(2)} 
            \asymp r^{\frac{d}\delta}R^{d(1-\frac1\gamma)(1-\frac1\delta)} & \text{ on } 
                    \{ R^\epsilon \leq r \leq R^{1+\delta-\frac 1 \gamma}\}.\\
                \end{cases}
        \end{align*}
        \item If $\delta >1/\gamma$ or if $\delta = \infty$, then 
        \begin{align*}
            \theta_{r, R} & \asymp  \begin{cases}
            \pi_{r, R} 
            \asymp
                    r^d R^{d(1-\frac1\gamma)} \log R & \text{ on } \{ 
                    r \geq  R(\log R)^{\frac{-1}{d(1-\gamma)}} \},\\
                     \pi_{r, R}^{(2)} 
            \asymp  r^{d\gamma} R^{d(1-\frac1\gamma)(1-\gamma)} & \text{ on } \{ R^\epsilon\leq r \leq R (\log R)^{\frac{-1}{d(1-\gamma)}}\}.
                \end{cases}
        \end{align*}
    \end{enumerate}
\end{theorem}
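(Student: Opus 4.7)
The plan is to deduce the theorem by combining Theorem~\ref{thm:two-edges}, which in the quantitatively subcritical phase gives $\theta_{r,R}\asymp\pi^{(2)}_{r,R}$ on $\{r\ge R^\epsilon\}$, with explicit up-to-constants asymptotics for the one-edge probability $\pi_{r,R}$ and the two-edge probability $\pi^{(2)}_{r,R}$, both computed separately. Since we always have $\pi_{r,R}\leq \pi^{(2)}_{r,R}$, the three cases in the statement are identified by comparing the two expressions and reporting the larger order term.

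For the one-edge probability, the crossing edges between the disjoint regions $B(0,r)$ and $B(0,R)^\complement$ are, conditionally on the independent Poisson processes restricted to these regions, independent Bernoulli trials, and the total count is Poisson with mean
\[
\mu_{r,R}=\lambda^2\int_{B(0,r)}\!\!dx\int_{B(0,R)^{\complement}}\!\!dy\,\Phi(|x-y|),\quad \Phi(s):=\int_0^1\!\!\int_0^1\rho(s^d(uv)^\gamma)\,du\,dv,
\]
so $\pi_{r,R}\asymp\min(1,\mu_{r,R})$. A direct splitting of $\Phi$ around the threshold $uv=s^{-d/\gamma}$ (or $uv=(r_0/s^d)^{1/\gamma}$ in the hard model) shows $\Phi(s)\asymp s^{-d\delta}$ when $\gamma\delta<1$ and $\Phi(s)\asymp s^{-d/\gamma}\log s$ when $\gamma\delta>1$ or $\delta=\infty$. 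Using $|x-y|\asymp|y|$ for $r\le R/2$ and integrating in spherical coordinates produces the claimed one-edge expressions $r^dR^{d(1-\delta)}$ in Cases~(1) and~(2) and $r^dR^{d(1-1/\gamma)}\log R$ in Case~(3).

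For the two-edge probability we apply the multivariate Mecke formula to the intermediate vertex $(z,w)$. Writing $N_B(z,w):=\lambda\int_B\!\int_0^1\rho(|x-z|^d(uw)^\gamma)\,du\,dx$ for the expected number of neighbours of $(z,w)$ in a Borel set $B\subset\R^d$, the disjointness of $B(0,r)$ and $B(0,R)^{\complement}$ makes the two connection events conditionally independent given that $(z,w)$ is a vertex, which yields the matching upper and lower bounds
\[
\pi^{(2)}_{r,R}-\pi_{r,R}\asymp \lambda\int_{\R^d}\!dz\!\int_0^1\!\bigl(1-e^{-N_{B(0,r)}(z,w)}\bigr)\bigl(1-e^{-N_{B(0,R)^{\complement}}(z,w)}\bigr)\,dw.
\]
The single-edge probabilities $\rho(s^d(uw)^\gamma)$ integrated over the endpoint mark satisfy $\asymp\min(1,s^{-d\delta}w^{-\gamma\delta})$ when $\gamma\delta<1$ and $\asymp\min(1,s^{-d/\gamma}w^{-1})$ when $\gamma\delta>1$ or $\delta=\infty$, producing explicit saturation thresholds for $N_B(z,w)$ in $|z|$ and $w$. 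Splitting the $(z,w)$-integration according to whether $z$ lies in $B(0,2r)$, in the annulus $B(0,R)\setminus B(0,2r)$, or outside $B(0,R)$, an optimisation shows that the dominant contribution comes from intermediate vertices whose mark is tuned to the saturation boundary of the more distant target ball; carrying out the computation yields $r^{d/\delta}R^{d(1-1/\gamma)(1-1/\delta)}$ in Case~(2) and $r^{d\gamma}R^{d(1-1/\gamma)(1-\gamma)}$ in Case~(3). Comparing one-edge and two-edge expressions in each regime shows the one-edge term dominates on all of $\{r\ge R^\epsilon\}$ in Case~(1), and equals the two-edge term exactly at the thresholds $r=R^{1+\delta-1/\gamma}$ (Case~(2)) and $r=R(\log R)^{-1/(d(1-\gamma))}$ (Case~(3)), producing the subregime splits stated in the theorem. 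The main technical obstacle is the two-edge upper bound: the naive first-moment estimate over intermediate vertices overcounts configurations in which $(z,w)$ is strongly connected to one of the target sets, handled by the $\min(1,\cdot)$ truncations, and additionally requires the separation into the three $z$-regions above, since the optimal intermediate vertex lives in different regions depending on the parameters.
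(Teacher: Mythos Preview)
Your approach is essentially the paper's: invoke Theorem~\ref{thm:two-edges} to reduce $\theta_{r,R}$ to $\pi^{(2)}_{r,R}$ on $\{r\ge R^\epsilon\}$, compute $\pi_{r,R}$ and $\pi^{(2)}_{r,R}$ explicitly (the paper packages these as Proposition~\ref{prop:asymp_escape_crossing_probabilities}), and compare to locate the crossover. One small point deserves a line of justification: the Mecke integral over intermediate vertices gives an \emph{upper} bound on $\pi^{(2)}_{r,R}$ by first moment, but your claim that it also gives a matching \emph{lower} bound does not follow from conditional independence alone---the number of valid intermediate vertices is not Poisson. The paper handles this by restricting to an explicit set $A_{r,R}\subset B(0,R)\times(0,1]$ on which both connection probabilities are of order one, so that the vertex count in $A_{r,R}$ \emph{is} Poisson and $\P(\text{at least one})\asymp\mathrm{Vol}(A_{r,R})\wedge 1$; alternatively a second-moment argument would work.
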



\begin{remark}
The probability of crossing the annulus by an arbitrary path is of the same order as crossing it with one edge as soon as the annulus is not too thick for this strategy. If $\delta<{1}/{\gamma}$ the crossover of strategies occurs
    at \smash{$r\asymp R^{1+\delta-{1}/{\gamma}}$}. Hence there is no crossover in regime $(1)$ when 
    the profile has the heaviest tail and one edge always crosses with the same order of probability as an arbitrary path. The intermediate regime $(2)$ however splits into two parts depending on the 
    thickness of the annulus. If it is too thick two edges cross with the same order as an arbitrary path, but one does not. In the light tail regime $(3)$, 
    which includes the hard profile, the crossover occurs for much thinner annuli, at  \smash{$r\asymp R (\log R)^{{-1}/{d(1-\gamma)}}$}. Then the order of the annulus crossing probability does not depend on the profile tail index~$\delta$. 
    We always assume that the inner radius grows at least 
polynomially with the outer radius.%
\end{remark}%

\begin{remark}\label{rem:boundary}
 For the asymptotic crossing probabilities in the boundary cases $\delta\gamma=1-\gamma$ and $\delta\gamma=1$ see Section~\ref{sec:one_two_crossing_bounds}, more precisely Corollary~\ref{cor:2e-probabilities-boundarycases}.
\end{remark}

\begin{theorem}[Escape probabilities from balls with strong centre]\label{thm:main_u_R}
    For 
    $0\leq\gamma<\frac{1}{2}$
    and
    $0< \lambda < \widehat{\lambda}_c,$ we have, for all $\epsilon>0$, that \smallskip
    \begin{enumerate}
    \item[(1)] If $2 < \delta < 1/\gamma -1$, then  \begin{align*}   \theta_{(u), R} \asymp \pi_{(u), R}\asymp \begin{cases}          u^{-\gamma\delta}R^{d(1-\delta)} & \text{ on } \{R^{d(\frac1{\gamma\delta}-\frac 1 \gamma)}\leq u \leq R^{-\epsilon}\},\vspace{0.5mm}\\                 1 & \text{ on }                    \{ u \leq R^{d(\frac1{\gamma\delta}-\frac 1 \gamma)} \}.\\                \end{cases}
        \end{align*}\smallskip
     \item[(2)] If $1/\gamma - 1< \delta < 1/ \gamma$, then
     \begin{align*}
     \theta_{(u), R} \asymp 
       \begin{cases}
       \pi^{(2)}_{(u), R}\asymp
       u^{-\gamma}R^{d(1-\frac1\gamma)(1-\frac1\delta)} \phantom{IIJ}
            \text{ on } \{ R^{-\frac{d(1+\delta-\frac 1 \gamma)}{\gamma\delta}}\leq u \leq R^{-\epsilon}\}, \vspace{1mm}\\ 
          \pi_{(u), R}\asymp
          \begin{cases}
       u^{-\gamma\delta}R^{d(1-\delta)} \phantom{XX} & \text{ on } \{R^{d(\frac 1{\gamma\delta}-\frac1\gamma)}\leq u \leq
         \smash{R^{-\frac{d(1+\delta-\frac 1 \gamma)}{\gamma\delta}}}\}, \vspace{0.5mm}\\
        1 & \text{ on } 
               \{  u \leq R^{d(\frac 1{\gamma\delta}-\frac1\gamma)} \}.\\
              \end{cases}
             \end{cases}
      \end{align*}\smallskip
       \item[(3)] If $\delta> 1/\gamma$ or if $\delta = \infty$ (hard case), then 
       \begin{align*}
            \theta_{(u), R} \asymp 
          \begin{cases}
            \pi^{(2)}_{(u), R}\asymp
            u^{-\gamma}R^{d(1-\frac1\gamma)(1-\gamma)}
             \phantom{XJ}\text{ on } \{R^{-d}\leq u \leq R^{-\epsilon}\},\vspace{1mm}\\
           \pi_{(u), R}\asymp
           \begin{cases}
          u^{-1}R^{d(1-\frac1\gamma)} \phantom{XX} & \text{ on } \{R^{d(1-\frac1\gamma)}\leq u \leq R^{-d}\},\vspace{0.5mm}\\
           1 & \text{ on } 
                  \{ u\leq  R^{d(1-\frac1\gamma)} \}.\\
              \end{cases}
              \end{cases}
       \end{align*}
 \end{enumerate}
   \end{theorem}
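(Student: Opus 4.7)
The plan is to combine Theorem~\ref{thm:two-edges}, which reduces the escape probability to the two-edge escape probability, with direct up-to-constants computations of both $\pi_{(u), R}$ and $\pi^{(2)}_{(u), R}$. Since $\pi_{(u), R} \le \pi^{(2)}_{(u), R} \le \theta_{(u), R}$ and Theorem~\ref{thm:two-edges} gives $\theta_{(u), R} \asymp \pi^{(2)}_{(u), R}$ on $\{u \le R^{-\epsilon}\}$, it is enough to prove explicit asymptotics for $\pi_{(u),R}$ and $\pi^{(2)}_{(u),R}$, verify that the expression listed in the statement is the larger of the two in each sub-regime of $(u,R)$, and check that the complementary sub-regimes are exactly those in which the one-edge mean already forces $\pi_{(u),R}\asymp 1$.

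For the one-edge probability, the void-probability formula for a Poisson process gives $\pi_{(u),R} \asymp 1 \wedge \mu_1(u,R)$, where
\[
\mu_1(u,R) := \lambda \int_{|y|>R}\!\int_0^1 \rho\bigl(|y|^d (uv)^\gamma\bigr)\, dv\, dy.
\]
I would switch to polar coordinates and split the $v$-integral at the threshold $v^\ast := u^{-1}|y|^{-d/\gamma}$ at which $\rho$ transitions from $\asymp 1$ to its polynomial tail $\asymp (|y|^d(uv)^\gamma)^{-\delta}$. The answer depends on whether $\gamma\delta<1$ or $\gamma\delta>1$: in the first case (regime (1) and part of (2)) the integral is dominated by $v \asymp 1$ and produces $\mu_1\asymp u^{-\gamma\delta}R^{d(1-\delta)}$, while in the second (regime (3), with the hard profile as the limit $\delta \to \infty$) it is dominated by $v \asymp v^\ast$ and produces $\mu_1 \asymp u^{-1}R^{d(1-1/\gamma)}$. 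Setting $\mu_1$ equal to $1$ yields exactly the thresholds \smash{$u = R^{d(1/(\gamma\delta)-1/\gamma)}$} and $u = R^{d(1-1/\gamma)}$ separating the saturation regime $\pi_{(u),R} \asymp 1$ from the polynomial regime, as in the statement.

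For the two-edge probability, by the Mecke formula the expected number of self-avoiding two-paths from $(0,u)$ to $B(0,R)^\complement$ through an intermediate vertex $(z,w)\in B(0,R)\times (0,1]$ equals
\[
\mu_2(u,R) := \lambda^2 \int_{|z|\le R}\!\int_0^1\!\int_{|y|>R}\!\int_0^1 \rho\bigl(|z|^d(uw)^\gamma\bigr)\rho\bigl(|y-z|^d(wv)^\gamma\bigr)\, dv\, dy\, dw\, dz.
\]
A union bound yields the upper bound $\pi^{(2)}_{(u),R}\le \pi_{(u),R}+\mu_2$. For the matching lower bound I plan to use a Paley-Zygmund inequality analogous to the one carried out in Section~\ref{sec:one_two_crossing_bounds} for $\pi^{(2)}_{r,R}$, which reduces everything to bounding the second moment of the two-path count by a constant times the square of the mean. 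Evaluating $\mu_2$ amounts to carrying out four nested integrals in which the mark $w$ of the intermediate vertex is tuned so that both factors $\rho(|z|^d(uw)^\gamma)$ and $\rho(|y-z|^d(wv)^\gamma)$ sit at the soft/hard threshold and are of order one simultaneously. A direct computation in the two cases $\delta<1/\gamma$ and $\delta>1/\gamma$ yields $\mu_2\asymp u^{-\gamma}R^{d(1-1/\gamma)(1-1/\delta)}$ and $\mu_2 \asymp u^{-\gamma}R^{d(1-1/\gamma)(1-\gamma)}$ respectively, matching the two-edge expressions in parts (2) and (3) of the statement; in regime (1) the two-path mean is of the same order as $\mu_1$, so no improvement over one edge arises.

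The main obstacle is the second-moment bound underlying the Paley-Zygmund step: the intermediate vertex can have a very small mark, making its degree heavy-tailed, and two two-paths may share the intermediate vertex or the endpoint, so the variance has to be decomposed by the overlap pattern. The analogous bound is handled in Section~\ref{sec:one_two_crossing_bounds} for the annulus case with the starting vertex averaged over $B(0,r)\times (0,1]$, so the main adaptation is simply to replace the root integration over $u$ by evaluation at a deterministic $u$, and to track how this changes the power of $u$ in the variance bound. Once the lower bound on $\pi^{(2)}_{(u),R}$ is established, the three cases of the statement follow by directly comparing the expressions for $\mu_1$, $\mu_2$ and the saturation threshold $1$, and identifying for each $(u,R)$ which is the largest; the crossover radii \smash{$R^{d(1/(\gamma\delta)-1/\gamma)}$}, \smash{$R^{-d(1+\delta-1/\gamma)/(\gamma\delta)}$}, $R^{d(1-1/\gamma)}$ and $R^{-d}$ that appear arise precisely from these three-way comparisons.
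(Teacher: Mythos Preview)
Your overall plan matches the paper's exactly: invoke Theorem~\ref{thm:two-edges} to reduce $\theta_{(u),R}$ to $\pi^{(2)}_{(u),R}$ on $\{u\le R^{-\epsilon}\}$, compute $\pi_{(u),R}$ and $\pi^{(2)}_{(u),R}$ explicitly, and compare them on each sub-domain to read off the crossover thresholds. Your computation of $\mu_1$ via the Poisson void formula and the split at $v^\ast$ is precisely what the paper does in Section~\ref{subsec:3.4}, and your upper bound $\pi^{(2)}_{(u),R}\le \pi_{(u),R/2}+\mu_2$ via Mecke also coincides with the paper's argument.

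The one real divergence is in the \emph{lower} bound for $\pi^{(2)}_{(u),R}$. Contrary to what you assume, Section~\ref{sec:one_two_crossing_bounds} does \emph{not} use Paley--Zygmund or any second-moment argument, neither for $\pi^{(2)}_{(u),R}$ nor for $\pi^{(2)}_{r,R}$. Instead, for each parameter regime the paper writes down an explicit region $A_{(u),R}\subset B(0,R)\times(0,1]$ of volume $\asymp u^{-\gamma}h(R)$ with the property that any Poisson point $(x,v)\in A_{(u),R}$ satisfies both $\P((0,u)\sim(x,v))\asymp 1$ and $\pi_{(v),2R}\asymp 1$; then $\P(N'_R>0)\gtrsim \P(\poisson\cap A_{(u),R}\ne\emptyset)\asymp \mathrm{Vol}(A_{(u),R})\wedge 1$. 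This is a first-moment argument in disguise and sidesteps the overlap analysis entirely. Your Paley--Zygmund route is plausible in principle (since $\gamma<\tfrac12$ gives the degree finite variance), but you would have to build the overlap decomposition from scratch rather than adapt something already in the paper, and the boundary case $\delta=\tfrac1\gamma-1$ carries a logarithmic factor that the paper handles by a separate, more delicate construction. The constructive-region approach is both shorter and avoids the heavy-tail concern you flag.
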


\begin{remark}
 As in Remark~\ref{rem:boundary} the boundary cases $\delta\gamma=1-\gamma$ and $\delta\gamma=1$ are detailed in Corollary~\ref{cor:2e-probabilities-boundarycases} 
 of Section~\ref{sec:one_two_crossing_bounds}.
\end{remark}

Although in Theorem~\ref{thm:main_u_R} we plant unusually strong vertices at the origin, the result enables us to give lower bounds on the probability that the connected component of a typical point has a radius larger than  $R$ about its centre. To make sense of the notion of a typical point we use the Palm theory of marked Poisson processes, which allows us to pick a typical vertex from the graph and shift it to the origin.
\smallskip

\begin{theorem}[Subcritical one-arm exponents]\label{thm:subcritical_one_arm_exponent}
    For $0\leq\gamma<\frac12$ and 
    $0< \lambda < \widehat{\lambda}_c,$ as $R\nearrow\infty$,
    $$\frac1{d\log R} \log \P^*((0, U) \longleftrightarrow B(0,R)^\complement)
\longrightarrow
\begin{cases}
{1-\delta} & \text{if $2 < \delta < 1/\gamma -1$,}\\
(1-\frac1\gamma)(1-\frac1\delta)
            & \text{if $1/\gamma - 1\leq \delta < 1/ \gamma$,}\\
(1-\frac1\gamma)(1-\gamma)
& \text{if $\delta \ge 1/ \gamma$ or $\delta = \infty$,}
\end{cases}
$$
where $\mathbb P^*$ is the Palm measure at the origin associated with the marked Poisson point process on which our graph is defined, and $(0,U)$ is the vertex located at the origin under $\mathbb P^*$. 
\end{theorem}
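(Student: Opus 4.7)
The plan is to express the one-arm probability as an integral of the escape probability $\theta_{(u),R}$ from Theorem~\ref{thm:main_u_R} against the uniform mark of the origin, and then to evaluate this integral using the explicit up-to-constants estimates provided there. By the Mecke/Slivnyak formula for marked Poisson point processes, under the Palm measure $\P^*$ the configuration is distributed as $\poisson\cup\{(0,U)\}$ where $U$ is uniform on $(0,1]$ and independent of $\poisson$. Conditioning on $U$ yields
\[
\P^*\big((0,U)\longleftrightarrow B(0,R)^\complement\big)=\int_0^1 \theta_{(u),R}\,du,
\]
so it suffices to show that this integral is $R^{d\chi+o(1)}$ with $\chi$ the exponent announced in each case.

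A second ingredient is the monotonicity $\theta_{(u),R}\ge \theta_{(u'),R}$ whenever $u\le u'$. This follows by coupling the edges adjacent to the root: since $\rho$ is non-increasing and $\gamma\ge 0$, an edge from $(0,u')$ to $(y,v)$ can be coupled to be present from $(0,u)$, so any path from $(0,u')$ is also a path from $(0,u)$. Because Theorem~\ref{thm:main_u_R} only controls $\theta_{(u),R}$ on $\{u\le R^{-\epsilon}\}$, this monotonicity handles the missing tail through
\[
\int_{R^{-\epsilon}}^1 \theta_{(u),R}\,du\;\le\;\theta_{(R^{-\epsilon}),R}.
\]

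The core of the proof is then a direct split of $\int_0^{R^{-\epsilon}}$ at the breakpoints $u_1<u_2<R^{-\epsilon}$ of Theorem~\ref{thm:main_u_R} followed by explicit integration: each sub-integrand is either a power of $u$ or the constant~$1$. In each case exactly one sub-interval produces the announced exponent; for instance in case~(2) it is $[u_2,R^{-\epsilon}]$ with $u_2=R^{-d(1+\delta-1/\gamma)/(\gamma\delta)}$, on which $\int u^{-\gamma}R^{d(1-1/\gamma)(1-1/\delta)}\,du\asymp R^{d(1-1/\gamma)(1-1/\delta)-\epsilon(1-\gamma)}$, and cases~(1) and~(3) are analogous on $[R^{d(1-\delta)/(\gamma\delta)},R^{-\epsilon}]$ and $[R^{-d},R^{-\epsilon}]$ respectively. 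Retaining this sub-interval gives the lower bound. For the upper bound I would exploit the matching identity that the integrands of Theorem~\ref{thm:main_u_R} from two adjacent regimes coincide up to constants at the common breakpoint, so the sub-integral over $[u_{i-1},u_i]$ is of order $u_i$ times the common integrand at $u_i$, and is therefore sub-polynomially smaller than the dominant term. Combined with the monotonicity bound for the tail $[R^{-\epsilon},1]$, this yields $\lesssim R^{d\chi+\epsilon c}$ for some $c=c(\gamma,\delta)>0$. Dividing by $d\log R$ and letting first $R\to\infty$ and then $\epsilon\to 0$ produces the three limits claimed in the statement.

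The main obstacle is purely bookkeeping: three regimes of $(\gamma,\delta)$, each with two or three sub-regimes of $u$. Once one case is written out, the verification in the others reduces to the same matching identity at the breakpoints, so the argument is uniform. The degenerate case $\gamma=0$ (where only the soft model is considered and $\theta_{(u),R}$ does not depend on $u$) is obtained formally by collapsing $u_1=u_2=0$, leaving the dominant regime as the only contribution.
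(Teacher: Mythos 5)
Your proposal is correct and follows the same skeleton as the paper's proof: represent the Palm probability as $\int_0^1\theta_{(u),R}\,du$, cut at $u=R^{-\epsilon}$ so that the results valid only on $\{u\le R^{-\epsilon}\}$ apply, bound the tail $\int_{R^{-\epsilon}}^1$ by monotonicity through $\theta_{(R^{-\epsilon}),R}$, and send $\epsilon\to0$ after $R\to\infty$. Where you differ is in how the integral over $\{u\le R^{-\epsilon}\}$ is handled: you integrate the piecewise power laws of Theorem~\ref{thm:main_u_R} regime by regime and verify that the sub-interval adjacent to $R^{-\epsilon}$ dominates, whereas the paper never needs this piecewise bookkeeping — it passes to $\pi^{(2)}_{(u),R}$ via Theorem~\ref{thm:two-edges} and bounds $\int_0^{R^{-\epsilon}}\pi^{(2)}_{(u),R}\,du\lesssim\pi^{(2)}_{(1),R}=h(R)$ in one stroke using Lemma~\ref{lem:integral_edge_crossing}, while the lower bound comes for free, up to constants and with no $\epsilon$-loss, from $\int_0^1\pi^{(2)}_{(u),R}\,du\asymp h(R)$ (no escape-probability theorem needed). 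Your route is more computational but elementary; the paper's is shorter and yields the sharper lower bound $\gtrsim h(R)$ recorded in its remark, which your $\epsilon$-losing lower bound does not recover (though it suffices for the stated logarithmic limit).

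Two points need patching. First, the theorem covers the boundary values $\delta=\tfrac1\gamma-1$ and $\delta=\tfrac1\gamma$, which Theorem~\ref{thm:main_u_R} excludes (its cases are strict); there you must instead integrate the log-corrected asymptotics of Corollary~\ref{cor:2e-probabilities-boundarycases} together with Theorem~\ref{thm:two-edges}, the logarithmic factors being harmless after dividing by $\log R$. Second, your blanket claim that each non-dominant sub-integral is of order $u_i$ times the common integrand at the breakpoint fails in regime (3), where the middle piece has integrand $u^{-1}R^{d(1-\frac1\gamma)}$ and contributes $R^{d(1-\frac1\gamma)}\log R$; this is still dominated, since the exponent gap to the leading term is $d(1-\gamma)-\epsilon(1-\gamma)>0$ for small $\epsilon$, so the conclusion stands, but the statement as written should be corrected.
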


\begin{remark}
Equivalent ways to define $\mathbb P^*$ are (i) sample a standard uniform $U$ and add the point $(0,U)$ to the vertex set, (ii) pick a point at random from all points located in $B(0,n)$, shift it to the origin and take a limit  of the probability as $n\to\infty$,  (iii) condition the Poisson process on having a point located at the origin. To do so, condition the Poisson process on having a point 
with location in $B(0,\eps)$, and let $\eps$ go to 0. For details see~\cite{LP}.
\end{remark}

\begin{remark}
    The event that the connected component of a typical point has a radius larger than  $R$ about its centre has polynomially decaying probability and the rate does not depend on $\lambda<\widehat{\lambda}_c$. 
  It is easy to see that the same result also holds for the diameter of a typical component. The result also holds under the (potentially weaker) assumption $\theta_R(\lambda)\to0$ and therefore  
    under this assumption the one-arm exponents extend to \smash{$\lambda=\widehat{\lambda}_c$}. 
\end{remark}

\begin{remark}
    In fact, we prove a sharper lower bound since
    \[
    \P^*((0, U) \longleftrightarrow B(0,R)^\complement) \ge \P^*((0, U) \simtwo B(0,R)^\complement)\asymp \pi_{(1),R}^{(2)},
    \]
    with asymptotics provided in Table~\ref{tab:gandh}. We also conjecture that Theorem~\ref{thm:two-edges} holds for $\epsilon=0$, in which case we would get
    $
    \P^*((0, U) \longleftrightarrow B(0,R)^\complement) \asymp \pi_{^{(1),R}}^{_{(2)}}.
    $
\end{remark}

\begin{remark}
Jahnel et al.~\cite{JLO} identify the one-arm exponents for the soft Boolean model for sufficiently small $\lambda>0$. In a comparable parametrisation they obtain
$$\frac1{d\log R} \log \P^*((0, U) \longleftrightarrow B(0,R)^\complement)
\longrightarrow
\begin{cases}
{1-\delta} & \text{if $1 < \delta \leq  1/\gamma -1$,}\\
(1-\frac1\gamma)(1-\frac1\delta)
            & \text{if $1/(\gamma \wedge (1-\gamma)) - 1< \delta$.}\\
\end{cases}
$$
Interestingly,  the exponents agree for $2<\delta\leq1/\gamma$ but for $\delta>1/\gamma$ they differ.
Their method of proof also differs considerably from ours.
\end{remark}

\pagebreak[3]

\textbf{Outline.} We start the proofs in Section~\ref{sec:prelim} with preliminary considerations on the dependence of the two-edge crossing probabilities on the radii~$r,R$ and intensity~$\lambda$ and the proof of Proposition~\ref{prop:annulus_r_2r}. In Section~\ref{sec:one_two_crossing_bounds} we prove tight up-to-constants bounds for the one-and two-edge crossing and escape probabilities, see Proposition~\ref{prop:asymp_escape_crossing_probabilities}. In particular, in Corollary~\ref{cor:2e-probabilities-boundarycases}, we state the bounds 
in the boundary cases thus complementing the statement of Theorems~\ref{thm:main_R_alpha_R} and~\ref{thm:main_u_R}.
We also provide bounds on the expected number of three-edge connections in Lemma~\ref{lem:integral_edge_crossing}.
In Section~\ref{sec:bootstrap} we present the novel method behind the proof of Theorem~\ref{thm:two-edges}. We begin with presenting the geometric argument which is at the heart of our proof in Section~\ref{subsec:geometric_argument}. The implications in
Proposition~\ref{prop:bootstrapping} allow us, in Section~\ref{subsec:bootstrap_argument}, to iteratively increase the domain on which the required bounds hold using a bootstrapping argument stated in Proposition~\ref{prop:induction_on_domain}.
We conclude the proofs of our four main theorems in Section~\ref{sec:thms} by combining the bootstrapping argument with the bounds of Section~\ref{sec:one_two_crossing_bounds}.

\section{Preliminary considerations}
\label{sec:prelim}

\subsection{Continuity and monotonicity of crossing probabilities}

\begin{lemma}\label{lem:monotone}
For every $R>0$ the map $(0,R) \to (0,\infty), r \mapsto \theta_{r,R}$ is strictly increasing and the map
$(0,1] \to (0,\infty), u \mapsto \theta_{(u),R}$ is strictly decreasing, the maps $\{0<r<R\} \to (0,\infty), (r,R)\mapsto \theta_{r,R}$ and $(u,R)\mapsto \theta_{(u),R}$ are continuous.
\end{lemma}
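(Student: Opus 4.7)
The plan is to separate monotonicity and continuity and, in each case, to treat the $r$ and $u$ variables by parallel arguments.

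\emph{Monotonicity.} The inclusion $B(0, r_1) \subset B(0, r_2)$ for $r_1 < r_2 < R$ yields $\theta_{r_1, R} \leq \theta_{r_2, R}$ directly. For $u_1 < u_2$ I would use the standard monotone coupling: for each potential neighbour $(y, v) \in \poisson$ of the origin draw an independent uniform variable $U_{(y, v)} \in [0, 1]$ and declare the edge $((0, u), (y, v))$ present iff $U_{(y, v)} \leq \rho(|y|^d (u v)^\gamma)$. Since $\rho$ is non-increasing, the edge set at $u_1$ contains that at $u_2$ almost surely, so $\theta_{(u_2), R} \leq \theta_{(u_1), R}$. To upgrade each to strict inequality I exhibit a positive-probability event in the relevant symmetric difference. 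For $r$, take $E_1$ to be the event $\{\poisson \cap B(0, r_1) = \emptyset\}$ and $E_2$ the event that there exist $(x, u) \in \poisson$ with $x \in B(0, r_2) \setminus B(0, r_1)$ and $(y, v) \in \poisson$ with $|y| > R$ that are connected by an edge; these events concern disjoint spatial regions of $\poisson$ and are therefore independent, and both have positive probability (for $E_2$, force small enough marks so that the connection probability is bounded below, using either the soft bound $\rho(t) \geq {\mathtt c} t^{-\delta}$ or the support of the hard profile). On $E_1 \cap E_2$ the annulus is crossed from $B(0, r_2)$ but not from $B(0, r_1)$. For $u$, apply the colouring theorem to the marked Poisson point process $\{(y, v, U_{(y, v)})\}_{(y,v)\in\poisson}$: its restrictions to the disjoint $U$-regions $\{U \leq \rho(|y|^d (u_2 v)^\gamma)\}$ and $\{\rho(|y|^d (u_2 v)^\gamma) < U \leq \rho(|y|^d (u_1 v)^\gamma)\}$ are independent Poisson processes; if the first is empty then $(0, u_2)$ is isolated, and if the second carries a point with $y \in B(0, R)^c$ then $(0, u_1)$ has a direct edge to $B(0, R)^c$. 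Both have positive probability, so does their intersection, yielding a strict lower bound for $\theta_{(u_1), R} - \theta_{(u_2), R}$.

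\emph{Continuity in $(r, R)$ and in $R$.} Fix $(r_0, R_0)$ with $r_0 < R_0$. The spheres $\partial B(0, r_0) \cup \partial B(0, R_0)$ have Lebesgue measure zero, so almost surely no point of $\poisson$ lies on them; combined with local finiteness of $\poisson$ in $B(0, R_0 + 1)$, this yields an almost surely positive random $\varepsilon$ such that $\poisson \cap B(0, r)$ and $\poisson \cap B(0, R)^c$ coincide with those at $(r_0, R_0)$ whenever $|r - r_0|, |R - R_0| < \varepsilon$. Since the edges of $\network$ do not depend on $r$ or $R$, the indicator $\one\{B(0, r) \longleftrightarrow B(0, R)^c\}$ is almost surely locally constant at $(r_0, R_0)$, and bounded convergence gives continuity of $\theta_{r, R}$. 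The same argument yields continuity of $R \mapsto \theta_{(u), R}$ at fixed $u$.

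\emph{Continuity in $u$ and main obstacle.} Under the monotone coupling, for $u_1 < u_2$ the symmetric difference of the two connection events is contained in the event that the origin has at least one edge present at $u_1$ but not at $u_2$; bounding its probability by the expected number of such discrepancy edges gives, uniformly in $R$,
\[
|\theta_{(u_1), R} - \theta_{(u_2), R}| \leq \lambda \int \bigl[\rho(|y|^d (u_1 v)^\gamma) - \rho(|y|^d (u_2 v)^\gamma)\bigr] \,\mathrm d y \,\mathrm d v = \mu(u_1) - \mu(u_2),
\]
where $\mu(u)$ is the mean degree of $(0, u)$. A change of variable shows $\mu(u) = C \lambda u^{-\gamma}$ with $C < \infty$, which is continuous in $u > 0$, so the bound tends to $0$ as $u_1 \to u_2$; together with the previous paragraph this gives joint continuity of $(u, R) \mapsto \theta_{(u), R}$. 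The most delicate point is the strict-monotonicity argument in the hard-profile case, where $\rho$ is discontinuous at $r_0$: one must verify that the discrepancy shell $\{(y, v) : |y|^d (u_2 v)^\gamma > r_0 \geq |y|^d (u_1 v)^\gamma\} \cap (B(0, R)^c \times (0, 1])$ has positive Lebesgue measure, which follows by forcing $v$ small enough that the shell sits outside $B(0, R)$.
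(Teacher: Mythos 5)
Your proof is correct and follows essentially the same route as the paper's: monotone couplings for the comparisons, explicit positive-probability discrepancy events (empty inner ball, resp.\ isolated root versus one long edge at the root) for strictness, and continuity via the observation that, almost surely, a small change of the radii does not alter which Poisson points lie in $B(0,r)$ or $B(0,R)^{\complement}$ (the paper phrases this as the vanishing probability of finding a vertex in thin annuli), combined with a coupling bound in $u$. The only cosmetic difference is that you make the $u$-continuity quantitative through the mean-degree integral $\mu(u)\asymp\lambda u^{-\gamma}$, a step the paper leaves implicit in its "straightforward coupling".
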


\begin{proof}
For $r<r'<R$ the event $\{ B(0,r) \leftrightarrow B(0,R)^\complement\}$ implies 
$\{ B(0,r') \leftrightarrow B(0,R)^\complement\}$ giving monotonicity. It is strict as with positive probability the latter holds while  $B(0,r)\cap\poisson =\emptyset$. Similarly, under a straightforward coupling
for $u<u'$ the event $\{ (0,u') \longleftrightarrow B(0,R)^\complement\}$ implies 
$\{ (0,u) \longleftrightarrow B(0,R)^\complement\}$. The monotonicity is strict, as with positive probability there is exactly one edge adjacent to the origin for $u$ but none for $u'$.
Observe that for $r'\le r\le r''\le R''\le R\le R'$ the event $\{ B(0,r'') \leftrightarrow B(0,R'')^\complement\} \setminus \{ B(0,r') \leftrightarrow B(0,R')^\complement\}$ implies that there is a vertex in
$B(0,r'')\setminus B(0,r')$ or in $B(0,R')\setminus B(0,R'')$. The probability of this event goes to zero as $r',r''\to r$ and $R', R'' \to R$, and hence $0\leq \theta_{r'',\,R''}-\theta_{r',\,R'}\to 0$. This implies continuity as $\theta_{r',R'}\le \theta_{r,R}\le \theta_{r'',R''}.$ Continuity of $\theta_{(u),R}$ is proved following similar steps, using a straightforward coupling for the graphs with different values of~$u$.
\end{proof}

\begin{remark}\label{rem:continuity_monotonicity}
All statements of the lemma also hold for $\pi_{r,R}$ and $\pi^{_{(2)}}_{^{r,R}}$ in place of~$\theta_{r,R}$, and $\pi_{(u),R}$ and \smash{$\pi^{_{(2)}}_{^{(u),R}}$} in place of~$\theta_{(u),R}$.
\end{remark}

\subsection{Scaling finite-crossing events}
Many of the events we consider involve only a finite number of vertices and edges in the graph. 
In geometric inhomogeneous random graphs the asymptotic order of probability of such events is typically not changed if we modify the intensity $\lambda$ of the Poisson point process, or the scale at which the event is considered. 

\begin{definition}
    An event $E$ is called a \emph{finite-crossing event} for a graph $\mathscr G$ with vertex set $V\subset\R^d \times [0,1]$ if there exists $k\ge 1$, and a Borel subset $A \subset (\R^d \times (0,1])^k$ 
    such that
    \[
    E=\big\{\exists (\x_1,\ldots,\x_k)\in V^k\cap A, \forall 1\leq i\leq k-1,\, \x_i=\x_{i+1}\text{ or }\,\x_i\sim \x_{i+1}\big\}
    \]
    We also use this definition for rooted graphs
    $(\mathscr G, \x_0)$ adding the requirement $\x_0\sim \x_1$
    to the definition of $E$. Given $k$ and $A$ we say $E$ is the finite-crossing event associated with~$(k,A)$.%
    \pagebreak[3]%
    
    The set
    $A$ is called \emph{downward-closed} if, for all  $((x_1,u_1),\ldots,(x_k,u_k)) \in A$ and $u_1' \leq u_1, u_2' \leq u_2, \ldots, u_k' \leq u_k$ it follows that $((x_1,u_1'),\ldots,(x_k,u_k')) \in A$. The event $E$ is called downward-closed if $A$ is.
\end{definition}\pagebreak[3]

Note that the definition of finite-crossing events is broad enough to include events of the form $A^{_{\,\simk}} B$ and $(x,u)^{_{\, \simk}} A$, the latter using the notion for the graph with root $(x,u)$, and that both events are downward-closed. However, the events $A\longleftrightarrow B$ and $(x,u)\longleftrightarrow A$ are not finite-crossing events, since these consider paths of arbitrary length. The important feature of this definition is that a finite-crossing event $E$ associated with 
$(k,A)$ depends only on $k$ vertices and at most $k-1$ edges observed between these vertices, thus allowing us to control the impact of rescaling $\R^d$.\medskip

Let $A\subset (\R^d \times (0,1])^k$  be a Borel set as before, and $c>0$. We define $cA$ as
\[cA:=\{(cx,u) \colon (x,u)\in A\},\]
that is, the set defined by rescaling by $c$ the location of all the points in $A$. If $E$ is the finite-crossing event associated with $(k,A)$ and $c>0$, we write $cE$ to denote the finite-crossing event associated with 
$(k,cA)$. In the special case where $E$ is defined on $(\network,(x,u))$, $cE$ is defined on $(\network,(cx,u))$, that is, we also rescale the location of the root. 

\begin{lemma}
    Fix $p\in(0,1)$ and let $E$ be a finite-crossing event associated with some $(k,A)$. Then, for any decay exponent $1<\delta\le\infty$, we have
    \begin{eqnarray}
        \label{scaling_plambda}    p^k \P^{(\lambda)}(E)&\le \P^{(p\lambda)}(E) &\le \P^{(\lambda)}(E), \\
        \label{scaling_pE} (\tfrac{\mathtt c}{\mathtt C}p^{d\delta})^{k-1}\P^{(p^{-d}\lambda)}(E)&\le \P^{(\lambda)}(p^{-1}E)&\le \P^{(p^{-d}\lambda)}(E),
    \end{eqnarray}
    where the left-hand side in~\eqref{scaling_pE} uses $\mathtt c, \mathtt C$ from \eqref{eq:conditions_rho} for the soft profile and is set equal to zero for the hard profile. If $E$ is downward closed and $\gamma>0$, we also have
    \begin{equation}\label{scaling_downwardclosed}
    \P^{(p^{d(1/\gamma-1)}\lambda)}(E)\le \P^{(\lambda)}(p^{-1}E).
    \end{equation}
\end{lemma}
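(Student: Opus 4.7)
The plan is to realize each of the three inequalities by an explicit coupling of the underlying Poisson vertex sets: Bernoulli thinning for~\eqref{scaling_plambda}, a spatial rescaling for~\eqref{scaling_pE}, and a joint rescaling of positions and marks, exploiting downward-closedness, for~\eqref{scaling_downwardclosed}. The only place that will require genuine analytic input is the lower bound in~\eqref{scaling_pE}, where one must compare the values of~$\rho$ at two different arguments using only the bounds of~\eqref{eq:conditions_rho}, which hold only on $[1,\infty)$.

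For~\eqref{scaling_plambda} I would write $\poisson^{(p\lambda)}$ as the $\mathrm{Bernoulli}(p)$-thinning of $\poisson^{(\lambda)}$, keeping on the retained vertices the edges of the $\lambda$-graph. Finite-crossing events are increasing in both the vertex and the edge sets, so $E$ in the thinned graph implies $E$ in the $\lambda$-graph and hence $\P^{(p\lambda)}(E)\le\P^{(\lambda)}(E)$. For the lower bound, take a measurable selection $T$ of a witnessing $k$-tuple for $E$ in $\poisson^{(\lambda)}$; conditionally on $\poisson^{(\lambda)}$ and its edges the probability that all $k$ points of $T$ survive the thinning equals $p^k$, and on this event $T$ is still a witness for $E$ in $\poisson^{(p\lambda)}$, which gives $\P^{(p\lambda)}(E)\ge\E[\1_{E}\,p^k]=p^k\P^{(\lambda)}(E)$.

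For~\eqref{scaling_pE} the plan is to apply the spatial rescaling $\psi(x,u)=(px,u)$. A change of variables shows $\psi(\poisson^{(\lambda)})\stackrel{\mathrm d}{=}\poisson^{(p^{-d}\lambda)}$, while the definition of $p^{-1}A$ is arranged so that $(\by_i)_i\in p^{-1}A$ is equivalent to $(\psi(\by_i))_i\in A$. Under $\psi$ each connection argument is multiplied by $p^d$, so the natural edge probability between $\psi(\x)$ and $\psi(\by)$ in $\poisson^{(p^{-d}\lambda)}$ is $\rho(p^d z)\ge\rho(z)$, where $z=|y-x|^d(uv)^\gamma$. A uniform coupling of the edge Bernoullis then makes every edge of $\poisson^{(\lambda)}$ survive as an edge of $\poisson^{(p^{-d}\lambda)}$, and a witness for $p^{-1}E$ pushes forward to one for $E$, giving the upper bound. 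For the lower bound I would go in the reverse direction by thinning each natural edge of $\poisson^{(p^{-d}\lambda)}$ independently with survival probability $\rho(z)/\rho(p^d z)$; a short case analysis on whether $z$ and $p^d z$ exceed~$1$, using~\eqref{eq:conditions_rho} and (in the small-$z$ regime) the bound $\rho(1)\ge\mathtt{c}$ that follows from monotonicity, shows that this ratio is bounded below by $(\mathtt{c}/\mathtt{C})p^{d\delta}$ uniformly in~$z$; since a witness needs $k-1$ independent edges to survive, we pick up the factor $((\mathtt{c}/\mathtt{C})p^{d\delta})^{k-1}$. In the hard-profile case the upper bound goes through unchanged while the lower bound is vacuous by convention.

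For~\eqref{scaling_downwardclosed} the new ingredient is that downward-closedness lets us shrink marks without leaving~$A$. I would take the joint rescaling $\psi(x,u)=(x/p,p^{d/\gamma}u)$ applied to $\poisson^{(\tilde\lambda)}$ with $\tilde\lambda=p^{d(1/\gamma-1)}\lambda$: the Jacobian is $p^{d(1-1/\gamma)}$, so $\psi(\poisson^{(\tilde\lambda)})$ is Poisson of intensity $\tilde\lambda\,p^{d(1-1/\gamma)}=\lambda$ on $\R^d\times(0,p^{d/\gamma}]$ and can be embedded into $\poisson^{(\lambda)}$ by independently adjoining a Poisson process on the complementary mark range. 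A witnessing tuple $(\by_i)_i\in A$ in $\poisson^{(\tilde\lambda)}$ is sent to $(\psi(\by_i))_i=(y_i/p,p^{d/\gamma}v_i)_i$, which lies in $p^{-1}A$ precisely because $p^{d/\gamma}\le 1$ and $A$ is downward-closed. As in the previous paragraph the natural edge probability on $\psi(\poisson^{(\tilde\lambda)})$ equals $\rho(p^d z)\ge\rho(z)$, so uniformly coupling the edge Bernoullis lets every edge of $\poisson^{(\tilde\lambda)}$ survive as an edge of $\poisson^{(\lambda)}$, and hence a witness for $E$ in $\poisson^{(\tilde\lambda)}$ pushes forward to a witness for $p^{-1}E$ in $\poisson^{(\lambda)}$.
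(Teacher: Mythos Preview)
Your proposal is correct and follows essentially the same approach as the paper: Bernoulli vertex thinning for~\eqref{scaling_plambda}, spatial rescaling $x\mapsto px$ combined with edge percolation for~\eqref{scaling_pE}, and the joint rescaling $(x,u)\mapsto(p^{-1}x,p^{d/\gamma}u)$ together with downward-closedness for~\eqref{scaling_downwardclosed}. Your case analysis on whether $z$ and $p^dz$ exceed~$1$ when bounding $\rho(z)/\rho(p^dz)$ is in fact slightly more careful than the paper's one-line appeal to~\eqref{eq:conditions_rho}; the paper also briefly notes the minor modifications needed for the rooted case, which you do not mention explicitly but which carry over without difficulty.
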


\begin{proof}
    We first give the complete proof for the case where $E$ is an event on the unrooted graph $\network$, and then describe the necessary modifications to extend it to $(\network,(x_0,u_0))$. As we will consider several values of~$\lambda$, we write $\mathscr G_\lambda$ for the graph obtained for a fixed value of~$\lambda$. The inequalities in \eqref{scaling_plambda} follow from the observation that the graph $\mathscr G_{p\lambda}$ can be obtained by first constructing the graph $\mathscr G_{\lambda}$, and then performing  independent Bernoulli percolation with retention parameter $p$ on its vertices. In this construction, if $E$ holds on $\network_{p\lambda}$, then it must also hold on $\network_{\lambda}$. Conversely, any sequence of vertices satisfying $E$ in $\mathscr G_{\lambda}$ is also present in $\mathscr G_{p\lambda}$ with probability at least $p^k$.
    \smallskip

    For the proof of \eqref{scaling_pE} 
    we define a coupling of $\network_{\lambda}$ and $\network_{p^{-d}\lambda}$. Fix a realization of $\network_{p^{-d}\lambda}$ and define an intermediate graph \smash{$\widetilde{\network}$} by moving each vertex \smash{$(x,u)\in\network_{p^{-d}\lambda}$} to a new location $(p^{-1}x,u)$, while preserving all the edges. Construct $\network_{\lambda}$ by performing  independent edge percolation on $\widetilde{\network}$, with each edge $\{(x,u),(y,v)\}$ being retained with probability 
    \[
    \frac{\rho\left(p^{-d}|y-x|^d (uv)^\gamma\right)}{\rho\left(|y-x|^d (uv)^\gamma\right)}\geq \frac{\mathtt c}{\mathtt C}p^{d\delta},
    \]
    where the inequality follows from \eqref{eq:conditions_rho} in the case of a soft profile. It can be readily checked that this alternative construction of $\network_{\lambda}$ gives the same law as the original one. From this construction, we deduce that $E$ occurs in $\network_{p^{-d}\lambda}$ if and only if $p^{-1}E$ occurs in $\widetilde{\network}$, and the result then follows from the same argument as in the proof of \eqref{scaling_plambda}. \smallskip

    The proof of \eqref{scaling_downwardclosed} follows, again, from a coupling of \smash{$\network_{p^{d(1/\gamma-1)}\lambda}$} and $\network_\lambda$. The construction is a bit more involved than the one used for \eqref{scaling_pE}.
    \begin{enumerate}
        \item Fix a realization of \smash{$\network_{p^{d(1/\gamma-1)}\lambda}$} and define an intermediate graph $\widetilde{\network}$ by moving each vertex $(x,u)\in\network_{p^{d(1/\gamma-1)}\lambda}$ to $(p^{-1}x,p^{d/\gamma}u)$, while preserving all the edges.
        
        \item For every pair of vertices $\{(p^{-1}x,p^{d/\gamma}u),(p^{-1}y,p^{d/\gamma}v)\}$ in $\widetilde{\network}$ not initially adjacent, we resample the edge with probability
         \[
    \frac{\rho\left(p^{d}|y-x|^d (uv)^\gamma\right)-\rho\left(|y-x|^d (uv)^\gamma\right)}{1-\rho\left(|y-x|^d (uv)^\gamma\right)}.
    \]
    This probability is nonnegative by our assumptions on $\rho$.\pagebreak[3]\smallskip
    
    \item Finally, we construct $\network_{\lambda}$ by adding to the vertex set of $\widetilde{\network}$ the points of a Poisson point process $\poisson'$ on $\R^d\times[p^{d/\gamma},1)$ with intensity $\lambda$, and sampling edges between vertices in $\poisson'$ and those of $\widetilde{\network}$ as before.
        \end{enumerate}
    It can be verified that this  construction yields a sample of $\network_{\lambda}$. Suppose that $E$ is a downward closed finite-crossing event, and that $(x_1,u_1),\ldots,(x_k,u_k)$ is a sequence of vertices in \smash{$\network_{p^{d(1/\gamma-1)}\lambda}$} satisfying $E$. Then the sequence $(p^{-1}x_1,p^{d/\gamma}u_1),\ldots,(p^{-1}x_k,p^{d/\gamma}u_k)$ obtained after Step~(1)
    satisfies $p^{-1}E$ in $\network_{\lambda}$, so from this coupling we conclude \eqref{scaling_downwardclosed}.\smallskip
    
    The proof of the lemma for events $E$ defined on $(\network, (x_0,u_0))$ is analogous; however, special treatment is needed for the root. In the proof of~\eqref{scaling_pE}, the root is always kept during site percolation. In the proof of~\eqref{scaling_plambda}, we keep its adjacent edges. In the proof of \eqref{scaling_downwardclosed}, the root is relocated to $(p^{-1}x_0, u_0)$ instead of $(p^{-1}x_0, p^{d/\gamma}u_0)$, and no edges are added adjacent to it in Step $(2)$. With these considerations, the proof carries over analogously.
\end{proof}
The next lemma states that linear rescaling of the inner or outer radii does not affect the order of crossing and escape probabilities.
\begin{lemma}\label{lem:renormalization_constants}
Let $0\leq\gamma<1$ and $1<\delta \leq\infty$ and assume $\gamma = 0$ and $\delta = \infty$ do not occur simultaneously. For every $a>0$ there exist constants $0<c(a)<C(a)$ (depending on $\gamma$ and $\delta$) such that
for all $0<u<1$  and $R>0$ we have
\begin{align}
    \label{scaling_piu1} c(a) \pi_{(u),R} \leq & \pi_{(u),aR} \leq C(a) \pi_{(u),R}, \\
    \label{scaling_piu2}c(a) \pi^{(2)}_{(u),R} \leq & \pi^{(2)}_{(u),aR} \leq C(a) \pi^{(2)}_{(u),R}.
\end{align}
 Moreover, for every $\varepsilon>0$ and $a,b>0$, there exist constants 
 $0<c(a,b,\varepsilon)<C(a,b,\varepsilon)$ (again, depending on $\gamma$ and $\delta$) such that, for $r,R>0$ with 
 $0<\frac{r}{R}\leq (1-\varepsilon)(\frac{b}{a}\wedge1)$, we have
 \begin{align}
     \label{scaling_pi1}c(a,b,\varepsilon) \pi_{r,R} \leq & \pi_{ar, bR} \leq C(a,b,\varepsilon) \pi_{r,R}, \\
     \label{scaling_pi2}c(a,b,\varepsilon) \pi^{(2)}_{r,R} \leq & \pi^{(2)}_{ar, bR} \leq C(a,b,\varepsilon) \pi^{(2)}_{r,R}.
 \end{align}

\end{lemma}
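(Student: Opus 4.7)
The plan: All four quantities are probabilities of finite-crossing events, with at most $k=2$ (for the one-edge events) or $k=3$ (for the two-edge events) vertices. The preceding scaling lemma, combined with monotonicity in $\lambda$, then yields for any such event $E$ and any $c>0$ constants $c_1(c,k),C_1(c,k)>0$ depending on $c,d,\delta,k,\gamma$ with
\[
c_1(c,k)\,\P^{(\lambda)}(E)\;\le\;\P^{(\lambda)}(cE)\;\le\;C_1(c,k)\,\P^{(\lambda)}(E).
\]
For the escape-probability bounds \eqref{scaling_piu1} and \eqref{scaling_piu2}, the root $(0,u)$ lies at the spatial origin, which is fixed under scaling, so that scaling by $a$ transforms $\{(0,u)\simk B(0,R)^{\complement}\}$ into $\{(0,u)\simk B(0,aR)^{\complement}\}$, and the displayed inequality with $c=a$ yields the claim immediately.

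For the annulus-crossing bounds \eqref{scaling_pi1} and \eqref{scaling_pi2}, scaling by $c$ sends $\{B(0,r)\simk B(0,R)^{\complement}\}$ to $\{B(0,cr)\simk B(0,cR)^{\complement}\}$, giving $\pi_{cr,cR}\asymp\pi_{r,R}$ and so settling the symmetric case $a=b$. For general $a,b$, applying the scaling lemma with $c=a$ produces $\pi_{ar,bR}\asymp\pi_{r,bR/a}$, reducing the problem to a comparison of $\pi_{r,bR/a}$ with $\pi_{r,R}$. Monotonicity of the outer-radius dependence supplies one inequality, depending on the sign of $b-a$.

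The nontrivial direction, which is the core difficulty, asks for a two-sided comparison of $\pi_{r,R'}$ and $\pi_{r,R}$ for two outer radii of the same order. To handle it one exploits the Poisson structure of the edge set: Markov's inequality for the upper bound and a standard second-moment estimate for the lower bound give $\pi_{r,R}\asymp \min(\mu(r,R),1)$, where
\[
\mu(r,R)\;:=\;\lambda^{2}\iiiint_{B(0,r)\times B(0,R)^{\complement}\times[0,1]^{2}}\rho\bigl(|x-y|^{d}(uv)^{\gamma}\bigr)\,dx\,dy\,du\,dv
\]
is the expected number of qualifying edges. The change of variables $x=a\tilde x$, $y=b\tilde y$ in $\mu(ar,bR)$, combined with the non-degeneracy hypothesis $r/R\le(1-\epsilon)(b/a\wedge 1)$ which forces $|a\tilde x-b\tilde y|\asymp|\tilde x-\tilde y|\asymp|\tilde y|$ uniformly on the integration domain with constants depending on $a,b,\epsilon$, and the two-sided profile bound \eqref{eq:conditions_rho} in the soft case (or the indicator structure in the hard case), then yields $\mu(ar,bR)\asymp\mu(r,R)$, hence $\pi_{ar,bR}\asymp\pi_{r,R}$. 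The same analysis applied to an expected number of two-edge paths handles $\pi^{(2)}$. The main obstacle is precisely the passage from the symmetric scaling provided by the scaling lemma to the asymmetric rescaling of inner and outer radii: this cannot be captured within the scaling-lemma framework alone and requires the explicit integral comparison enabled by the non-degeneracy assumption $r/R\le(1-\epsilon)(b/a\wedge 1)$.
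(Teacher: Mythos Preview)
Your treatment of the escape probabilities \eqref{scaling_piu1}, \eqref{scaling_piu2} and of the symmetric case $a=b$ in \eqref{scaling_pi1}, \eqref{scaling_pi2} is correct and matches the paper. The reduction of the general case to a comparison of $\pi_{r,R}$ with $\pi_{r,R'}$ for $R'/R$ fixed, with one direction supplied by monotonicity, is also fine.

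The gap lies in the remaining direction. Your argument hinges on the claim that a ``standard second-moment estimate'' yields $\pi_{r,R}\asymp\mu(r,R)\wedge 1$. This fails in the regime $\gamma\delta>1$. Writing $M$ for the number of crossing edges, the second factorial moment contains a term $T_1$ counting ordered pairs of edges sharing the inner endpoint, namely
\[
T_1\;\asymp\;r^d\int_0^1\bigl(\E_{(0,u)}[M_R]\bigr)^2\,du
\;\asymp\;r^d\int_0^1\bigl(u^{-1}R^{d(1-1/\gamma)}\wedge u^{-\gamma}\bigr)^2\,du
\;\asymp\;r^dR^{d(2-1/\gamma)},
\]
whereas $\mu\asymp r^dR^{d(1-1/\gamma)}\log R$. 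Hence $T_1/\mu\asymp R^d/\log R\to\infty$, the second moment is dominated by pairs of edges emanating from a single very strong vertex, and Paley--Zygmund gives only $\P(M>0)\gtrsim\mu^2/T_1\asymp r^dR^{-d}(\log R)^2$, far short of the true order $\mu$. So the equivalence $\pi_{r,R}\asymp\mu\wedge 1$, while true, cannot be obtained this way; establishing it amounts to redoing the lower-bound constructions of Section~\ref{subsec:3.4}, and the two-edge analogue would be still more involved.

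The paper avoids all of this with a purely geometric trick that stays inside the scaling-lemma framework: after reducing to $b=1$, $a<1$, one covers $B(0,r)$ by $n(a,\varepsilon)$ balls of radius $\varepsilon ar$. Any crossing edge from $B(0,r)$ to $B(0,R)^\complement$ starts in one of these balls and has its other endpoint at distance at least $R-r\ge\varepsilon R$ from the ball's centre, so a union bound gives $\pi_{r,R}\le n\,\pi_{\varepsilon ar,\varepsilon R}$, and the symmetric scaling already established turns the right-hand side into $C\pi_{ar,R}$. No moment computation is needed.
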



\begin{proof}
Combining~\eqref{scaling_pE} and~\eqref{scaling_downwardclosed}, we obtain that for any downward closed finite-crossing event $E$ associated to some $(k,A)$, and any $0<p<1$,
\[
    \max\big\{(\tfrac{\mathtt c}{\mathtt C}p^{d \delta})^{k-1}\P^{(p^{-d}\lambda)}(E),\P^{(p^{d(1/\gamma-1)}\lambda)}(E)\big\}\le \P^{(\lambda)}(p^{-1}E)\le \P^{(p^{-d}\lambda)}(E).  
\]
Combining these inequalities with~\eqref{scaling_plambda} we obtain that
\[
     \max\big\{(\tfrac{\mathtt c}{\mathtt C}p^{d \delta})^{k-1},p^{d(1/\gamma-1)k}\big\}\P^{(\lambda)}(E)\le \P^{(\lambda)}(p^{-1}E)\le p^{-kd}\P^{(\lambda)}(E).  
\]
Since either $\gamma>0$ or $\delta<\infty$, we have found explicit constants $0<c\leq C$ depending on $\gamma$, $\delta$, $p$ and $k$, such that
\begin{equation}
    \label{eq:scaling}
     c\P(E)\le \P(p^{-1}E)\le C\P(E).  
\end{equation}
Notice that these inequalities hold for $0<p<1$, however we can extend them for general $p>0$. Indeed, the case $p=1$ is trivial, and for the case $p>1$ it is enough to observe that $E=p(p^{-1}E)$ so we can write
\[c\P(p^{-1}E)\le \P(E)\le C\P(p^{-1}E),\]
with $c$ and $C$ computed with $p^{-1}$, and thus \eqref{eq:scaling} follows in this case by rearranging the inequalities. The inequalities~\eqref{scaling_piu1} and~\eqref{scaling_piu2} follow directly from~\eqref{eq:scaling} by setting $p = a$ and $E = \{(0,u)^{_{\, \simk}} B(0,R)^{\complement}\}$, with $k = 1,2$, respectively.
\smallskip

The proofs of~\eqref{scaling_pi1} and~\eqref{scaling_pi2} also rely on~\eqref{eq:scaling}, 
however, the argument is more involved and must be handled differently depending on the relationship between the parameters $a$ and $b$. We provide only the proof of~\eqref{scaling_pi1}, as the proof of~\eqref{scaling_pi2} is almost identical.
\smallskip  

    \emph{Case $a=b$:}
The result follows directly from~\eqref{eq:scaling} by taking $E
=\{B(0,r)^{_{\simone}} B(0,R)^{\complement}\}$ and $p=1/a$ so that
$\P(E)=\pi_{r,R}$ and $\P(p^{-1}E)=\pi_{ar,aR}$. 
\medskip

    \emph{Case $a<b$:} From the previous case, we may assume $b = 1$, so that $a < 1$. Hence, $\pi_{ar, R} \leq \pi_{r, R}$, which gives the inequality on the right-hand side of~\eqref{scaling_pi1}. To bound $\pi_{ar, R}$ from below we first note that there is an integer $n$ depending only on~$a$, $\varepsilon$, and the dimension, such that $B(0,r)$ can be covered by $n(a)$ balls of radius $\varepsilon a r$ centred in $B(0,r)$. If $B(0,r)^{_{\simone}} B(0,R)^{\complement}$
     occurs, then there must be two adjacent vertices $\x_1\sim\x_2$ such that  $x_1\in B(0,r)$ and \smash{$x_2\in B(0,R)^\complement$}. Call $B(y,\varepsilon a r)$ the covering ball containing $x_1$ and notice that $|x_2-y|\geq R-r\geq\varepsilon R $, where the last inequality follows from our assumption $r\leq R(1-\varepsilon)$. It then follows that $x_2\in B(x,\varepsilon R)^{\complement}$ and hence the vertices $\x_1,\x_2$ satisfy $B(x,\varepsilon aR)^{_{\simone}} B(x,\varepsilon R)^{\complement}$. Taking a union bound over all the $n$ balls $B(y,\varepsilon ar)$, and using~\eqref{eq:scaling} with $E=\{B(0,ar)^{_{\simone}} B(0,R)^{\complement}\}$ and $p=\varepsilon$ we obtain
     \[\pi_{r,R} \leq n\pi_{\varepsilon a r,\varepsilon R} \leq nC \pi_{ar,R}\]

    \emph{Case $a>b$:} Call $r'=ar$ and $R'=bR$ and observe that from our hypothesis on $r$ and $R$ these must satisfy $r'\leq (1-\varepsilon)R$. Since $\frac{1}{a}<\frac{1}{b}$, we can use~\eqref{scaling_pi1} with $r',R'$ to obtain 
    \[c(\tfrac{1}{a},\tfrac{1}{b},\varepsilon) \pi_{r',R'} \leq  \pi_{r, R} \leq C(\tfrac{1}{a},\tfrac{1}{b},\varepsilon) \pi_{r',R'},\]
and the result then follows by rearranging these inequalities.
\end{proof}

\subsection{Proof of Proposition~\ref{prop:annulus_r_2r}}
\label{sec:Proof_of_Lemma_1.1}

    The proof of Proposition~\ref{prop:annulus_r_2r} is essentially hidden in the findings of \cite{jacob2025}. 
    In \cite[Definition 1.4]{jacob2025}, the authors introduce the 
    event 
    \[
    \mathcal{L}(r,c):=\big\{\exists\ \x\sim \y\colon |x|<r, |x-y|>cr \big\},
    \]
    and prove in~(15) that
    $
        \theta_r\le  \P(\mathcal{L}(2r,1/20))+ C_1 \theta_{r/10}^2,
    $
    for some constant $C_1$ depending only on the dimension, and in particular independent of~$r$. This 
    renormalization inequality is at the heart of the definition and study of the quantitatively subcritical phase $\lambda<\widehat \lambda_c$. Indeed, if $\P(\mathcal{L}(2r,1/20))$ tends to 0 as $r$ tends to infinity, a property which does not depend on the value of $\lambda>0$ by~\eqref{scaling_plambda}, and if $\theta_r$ is sufficiently small for some value of $r$, then $\theta_r$ will automatically decay to 0 with decay being controlled by $\P(\mathcal{L}(2r,1/20))$.
    More precisely, \cite{jacob2025} shows that in the case $\zeta<0$, for any given $\alpha>0$ and $c>0$, we have
    \[
    \P(\mathcal{L}(\alpha r,c))= r^{d\zeta(1+o(1))}.
    \]
    Then they use the renormalization inequality to show first that $\widehat \lambda_c>0$, and moreover that for any $\lambda<\widehat \lambda_c$ the term $\theta_r$ must also decay as $r^{d\zeta(1+o(1))}.$ 
    In particular, the term $C_1 \theta_{r/10}^2$ is negligible, and we obtain the upper bound
    \[
    \theta_r\lesssim  \P(\mathcal{L}(2r,1/20)).
    \]
    It is elementary that $\P(\mathcal{L}(r,c))\asymp \P(\mathcal{L}(r',c'))$ if $cr=c'r'$. Indeed, in the two events we look for edges of same length in balls of slightly different sizes and we can include the smaller ball in the larger ball and the larger in a fixed number of smaller balls. We can thus continue this upper bound with
    \[
    \theta_r\lesssim \P(\mathcal{L}(r/30,3))\le \pi_{r/30}.
    \]
    where the last inequality follows by inclusion of events. By~\eqref{eq:scaling}, we also have
    $\pi_{r/30}\asymp \pi_r$ and combining this with the elementary inequality $ \pi_r\le \theta_r$, we deduce $\theta_r\asymp \pi_r\asymp \P(\mathcal{L}(r,1))$.

    
\section{One and two-edge crossing probabilities}\label{sec:one_two_crossing_bounds}

In this section, we derive up-to constant bounds for the probabilities of one-edge and two-edge crossings. These results hold for arbitrary $\lambda >0$ and 
not just in the quantitatively subcritical phase. Indeed, by~\eqref{scaling_plambda} the results involving only finite crossing events do not depend on the value of $\lambda>0$. The main bounds are collectively stated in Section~\ref{sec:state_one_two_crossing_bounds}. Proofs for the one and two-edge probabilities are given in Section~\ref{subsec:3.4}, while bounds involving three-edge crossings appear in Section~\ref{sec:integral_edge_crossing}.

\subsection{Statement of the one and two-edge crossing probabilities}
\label{sec:state_one_two_crossing_bounds}

%
We first argue that if $\delta=2$  or $\gamma=\frac12$ there is no quantitatively subcritical phase. Recall from the introduction that in this case $\zeta=0$ and the results of~\cite{jacob2025} do not settle whether \smash{$\widehat \lambda_c$} is positive or 0. The following lemma resolves this question. 

\begin{lemma}\label{lem:quant}
    If $\delta=2$  or $\gamma=\frac12$, then the one-edge crossing probabilities satisfy
    $\pi_R\asymp 1$. 
\end{lemma}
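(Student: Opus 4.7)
Since $\pi_R\leq 1$ is trivial, only the lower bound $\pi_R\gtrsim 1$ requires work. The plan is to split according to whether the mark or the tail is at the boundary, and in each case exhibit an $\Omega(1)$-probability configuration containing a single crossing edge. I distinguish the ``heavy mark'' case $\gamma\geq 1/2$ (which covers $\gamma=1/2$ with any $\delta$, as well as $\delta=2$ with $\gamma\geq 1/2$) and the complementary ``heavy tail'' case $\delta=2$, $\gamma<1/2$. The former uses an explicit pair of atypically strong vertices, the latter uses the Paley--Zygmund inequality with typical vertices.

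\textbf{Case $\gamma\geq 1/2$.} I would define the two Poisson counting events
\begin{align*}
A_1 &:= \bigl\{\exists\,(x,u)\in\poisson:\, x\in B(0,R/2),\, u\in[\tfrac12 R^{-d},R^{-d}]\bigr\},\\
A_2 &:= \bigl\{\exists\,(y,v)\in\poisson:\, y\in B(0,3R)\setminus B(0,2R),\, v\in[\tfrac12 R^{-d},R^{-d}]\bigr\}.
\end{align*}
The two regions are disjoint and each has Poisson mean of order $R^d\cdot R^{-d}\asymp 1$, so by independence $\P(A_1\cap A_2)=\P(A_1)\P(A_2)\gtrsim 1$, uniformly in $R$. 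On $A_1\cap A_2$, pick any such pair $(x,u),(y,v)$: one has $|x-y|\leq CR$ and $(uv)^\gamma\leq R^{-2d\gamma}\leq R^{-d}$ (using $\gamma\geq 1/2$), so $|x-y|^d(uv)^\gamma\leq C^d$ is uniformly bounded, and by monotonicity of $\rho$ the edge between these two points is present with probability at least $\rho(C^d)>0$. Combining gives $\pi_R\gtrsim\rho(C^d)>0$.

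\textbf{Case $\delta=2$, $\gamma<1/2$.} I would apply Paley--Zygmund to $\tilde N$, the number of crossing edges whose both endpoint marks lie in $[1/2,1]$; bounding marks away from $0$ keeps the mark integrals harmless and yields $\rho(|x-y|^d(uv)^\gamma)\asymp|x-y|^{-d\delta}$ for $|x-y|$ large. A Mecke computation gives
\[
\mathbb E[\tilde N]\asymp \lambda^2\int_{B(0,R)}\!dx\int_{B(0,2R)^\complement}\!|x-y|^{-d\delta}\,dy\asymp \lambda^2 R^d\cdot R^{d-d\delta}=\lambda^2,
\]
using $\delta=2$. Decomposing the second factorial moment by the number ($0$ or $1$) of shared endpoints between two distinct crossing edges, the disjoint part is $(\mathbb E\tilde N)^2$ by independence, and the shared-endpoint part with shared outside endpoint $y$ is
\[
\lesssim \lambda^3\int_{|y|\geq 2R}\!\bigl(R^d\,|y|^{-d\delta}\bigr)^{\!2}dy\asymp \lambda^3 R^{2d}\cdot R^{d-2d\delta}=\lambda^3 R^{-d}\to 0,
\]
with the symmetric shared-inside-endpoint case bounded analogously. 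Hence $\mathbb E[\tilde N^2]\asymp(\mathbb E\tilde N)^2$, so $\pi_R\geq\P(\tilde N\geq 1)\geq(\mathbb E\tilde N)^2/\mathbb E[\tilde N^2]\gtrsim 1$.

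\textbf{Main obstacle.} The delicate step is the shared-vertex bound in Case $\delta=2$, which uses both $\delta>1$ (for outer integrability of $|y|^{-2d\delta}$ over $B(0,2R)^\complement$) and the restriction to marks in $[1/2,1]$. Without the mark restriction, the mark integrals $\int_0^1\!\int_0^1(uv)^{-\gamma\delta}du\,dv$ and their squared analogue would diverge as soon as $2\gamma\delta\geq 1$, i.e.\ once $\gamma\geq 1/4$ with $\delta=2$; so the mark truncation is essential, not cosmetic, and is what drives the second moment to match the first moment squared.
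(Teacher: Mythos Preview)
Your argument is correct, with one small caveat in Case~1: when $\gamma=\tfrac12$ and the profile is hard, $\rho=\mathbf 1_{[0,r_0]}$, your bound $|x-y|^d(uv)^\gamma\le C^d$ with $C=\tfrac72$ does not guarantee $\rho(C^d)>0$ if $r_0<C^d$. This is harmless: replacing the mark window $[\tfrac12 R^{-d},R^{-d}]$ by $[\tfrac12 (KR)^{-d},(KR)^{-d}]$ for a fixed large $K$ keeps the Poisson mass of each region of order one while forcing $|x-y|^d(uv)^\gamma\le C^d K^{-d}\le r_0$. In the soft case $\rho(C^d)\ge \mathtt c\,C^{-d\delta}>0$ automatically, so no adjustment is needed there.

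The paper proceeds differently. It does not prove the lemma in isolation but extracts it from the general lower bounds on $\pi_{r,R}$ established in Section~3.4, specialising to $R=2r$. For $\delta=2$, $\gamma<\tfrac12$ (hence $\gamma\delta<1$) the paper conditions on the vertex sets in $B(0,r)$ and in $B(0,2R)\setminus B(0,R)$: with probability of order one there are $\asymp r^d$ and $\asymp R^d$ vertices respectively, and the $r^dR^d$ potential edges are then conditionally independent Bernoullis with success probability $\gtrsim R^{-d\delta}$, yielding a connection with probability $\asymp r^dR^{d(1-\delta)}\wedge 1$, which is of order one at $r=R/2$, $\delta=2$. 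This avoids any second moment. For $\gamma=\tfrac12$ the paper looks at the minimum marks $U,V$ in the two regions and bounds $\P(UV<R^{-d/\gamma})$ by an explicit integral. Your Case~1 is more elementary than the paper's minimum-mark computation (you fix the mark window rather than integrate over it), while your Case~2 trades the paper's clean conditional-independence argument for a Paley--Zygmund calculation; the mark truncation you introduce is indeed what makes the shared-vertex contribution manageable, though the paper's route sidesteps that issue entirely.
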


From now on we assume that $\delta>2$ and $\gamma<\frac12$ such that a quantitatively subcritical phase exists. The following proposition summarizes the main results of this section. 
\begin{proposition}\label{prop:asymp_escape_crossing_probabilities}
Let $2<\delta\leq\infty$ and $0\leq\gamma<\frac12$, excluding the case $(\gamma,\delta)=(0,\infty)$, and adopt the convention $\frac{1}{\gamma}=\infty$ when $\gamma=0$. Then, for arbitrary $\lambda>0$,
    the one-edge escape and crossing probabilities satisfy 
    \begin{align} \label{asymp:1e-escape-probabilities}
        \pi_{(u), R} \asymp 
        \begin{cases}
            u^{-1}g(R) \wedge 1 & \mbox{ if } \delta\ge\frac1\gamma, \\
            u^{-\gamma\delta}g(R) \wedge 1 & \mbox{ if } \delta<\frac1\gamma,
        \end{cases}
    \end{align}
    and
    \begin{align} \label{asymp:1e-crossing-probabilites}
        \pi_{r, R} \asymp 
        \begin{cases}
            r^d (\log R) g(R)  & \mbox{ if } \delta\ge\frac1\gamma, \\
            r^d g(R)  & \mbox{ if } \delta<\frac1\gamma,
        \end{cases}
    \end{align}
    and the two-edge escape and crossing probabilities satisfy
    \begin{align} \label{asymp:2e-escape-probabilities}
        \pi_{(u), R}^{(2)} \asymp 
        \begin{cases}
        \pi_{(u),R} + u^{-\gamma}h(R)\wedge 1 & \mbox{ if } \delta\ge \frac1\gamma-1,\\ \pi_{(u),R} & \mbox{ if } 
        \delta< \frac1\gamma-1,
        \end{cases}
    \end{align}
    and
 \begin{align} \label{asymp:2e-crossing-probabilites}
        \pi^{(2)}_{r, R} \asymp 
        \begin{cases}
            \pi_{r,R} + 
            r^{d\gamma} h(R)   & \mbox{ if } \delta>\frac1\gamma,\vspace{0.5mm} \\
             \pi_{r,R} + r^{d\gamma} (\log r)^\gamma h(R)   & \mbox{ if } \delta=\frac1\gamma,\vspace{0.5mm} \\
            \pi_{r,R} + r^{\frac{d}{\delta}} h(R)  & \mbox{ if } \frac{1}{\gamma}-1\leq\delta<\frac1\gamma, \vspace{0.5mm}\\
             \pi_{r,R}    & \mbox{ if } \delta<{\frac1\gamma-1},
        \end{cases}
    \end{align}    
    where the {asymptotic order of} $g(R):=\pi_{(1),R}$ and $h(R):=\pi^{(2)}_{(1),R}$ are stated in Table~\ref{tab:gandh}.
\end{proposition}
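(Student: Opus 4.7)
My plan is to reduce each of the four probability estimates to a computation of the expected value of a suitable edge or path count, to evaluate that integral by case analysis, and then to lift the mean bound to a probability bound via Poissonisation or a second-moment argument. For the one-edge escape probability $\pi_{(u),R}$, the neighbours of $(0,u)$ lying in $B(0,R)^{\complement}$ form a Poisson process by the colouring theorem, so $\pi_{(u),R}=1-e^{-M}\asymp M\wedge 1$ with
\[
M=M_{(u),R}=\lambda\int_0^1\!\int_{|y|>R}\rho\big(|y|^d(uv)^\gamma\big)\,dy\,dv
 =\lambda\int_0^1 (uv)^{-\gamma}\,I\!\big((uv)^{\gamma/d}R\big)\,dv,
\]
where $I(s):=\int_{|x|>s}\rho(|x|^d)\,dx$ satisfies $I(s)\asymp 1$ for $s\le 1$ and $I(s)\asymp s^{d(1-\delta)}$ for $s\ge 1$ (with the obvious modification in the hard case). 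I would split the $v$-integration at the threshold $v_{\ast}=R^{-d/\gamma}u^{-1}$; the two pieces produce the competing contributions $u^{-1}R^{d(1-1/\gamma)}$ (from the sparse regime $uv<v_\ast$) and $u^{-\gamma\delta}R^{d(1-\delta)}$ (from the dense regime $uv>v_\ast$), and their balance yields precisely the dichotomy $\delta\lessgtr 1/\gamma$ claimed in \eqref{asymp:1e-escape-probabilities}. For \eqref{asymp:1e-crossing-probabilites} I would integrate the source vertex over $B(0,r)$; for $r\ll R$ the factorisation $|y-x|\asymp|y|$ gives the extra factor $r^d$, and the logarithm at $\delta\ge 1/\gamma$ appears through the borderline integral $\int_{v_\ast}^1 v^{-\gamma\delta}\,dv$ at $\gamma\delta=1$. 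The matching lower bound for $\pi_{r,R}$ (not literally Poisson, since two edges can share endpoints) will follow by a standard second-moment computation whose only error term involves the bounded expected degree of a typical vertex.

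For the two-edge statements \eqref{asymp:2e-escape-probabilities} and \eqref{asymp:2e-crossing-probabilites} the upper bound is again by Markov, applied to the expected number of two-edge paths $(0,u)\to(z,w)\to(y,v)$ with $|y|>R$, that is, to the four-fold integral
\[
\lambda^2\int_0^1\!\int_0^1\!\int\!\!\int\rho\big(|z|^d(uw)^\gamma\big)\,\rho\big(|z-y|^d(wv)^\gamma\big)\,\mathbf{1}_{|y|>R}\,dz\,dy\,dw\,dv.
\]
The key heuristic is that the optimal intermediate vertex $(z,w)$ is a \emph{strong} vertex (small $w$) located close to the source: conditionally on such a vertex being present, the inner $(y,v)$-integral is essentially $M_{(w),R}$ and is controlled by \eqref{asymp:1e-escape-probabilities}. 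Integrating this one-edge escape quantity against the density of short edges from $(0,u)$ to the candidate intermediate then yields the ansatz $u^{-\gamma}h(R)$, with the threshold $\delta=1/\gamma-1$ appearing exactly as the exponent at which $\int_0^1 M_{(w),R}\,dw$ ceases to be bounded and the two-edge strategy overtakes the direct edge. For $\pi^{(2)}_{r,R}$ the further integration over a source vertex in $B(0,r)$ introduces the three-way split $\gamma\delta<1$, $\gamma\delta=1$, $\gamma\delta>1$: this determines whether the optimal strong intermediate sits in a small neighbourhood of the source, in a critically balanced logarithmic transition zone, or spreads over the whole of $B(0,r)$, producing the exponents $r^{d/\delta}$, $r^{d\gamma}(\log r)^\gamma$ and $r^{d\gamma}$ respectively. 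The matching lower bounds come in the escape case from planting a single strong vertex near the origin and applying the one-edge lower bound to it, and in the crossing case from a Mecke-based second-moment calculation on the two-edge path count.

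The main obstacle I anticipate is the book-keeping of competing strategies: in each parameter regime several distinct routes (a direct long edge, a short edge to a strong neighbour followed by a long escape, two balanced medium-range edges) produce integrals of comparable form, and one must verify carefully at every parameter boundary, in particular at $\delta=1/\gamma$ and at $\delta=1/\gamma-1$, that the stated dominant term really wins and that the variance bound for the second-moment lower bound is tight. Lemma~\ref{lem:quant} will fall out of the same calculations: evaluating the expected number of edges crossing $B(0,R)$ to $B(0,2R)^{\complement}$ gives $\asymp R^{d(2-\delta)}$ for $\delta<1/\gamma$ and $\asymp R^{d(2-1/\gamma)}\log R$ for $\delta>1/\gamma$, so at the borderline cases $\delta=2$ (with $\gamma<1/2$) and $\gamma=1/2$ (with $\delta>2$) this expectation is bounded away from zero; a second-moment estimate then gives $\pi_R\asymp 1$ and hence $\widehat{\lambda}_c=0$ in these boundary regimes.
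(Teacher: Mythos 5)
Your treatment of the one-edge quantities $\pi_{(u),R}$, $\pi_{r,(x,u)}$ and $\pi_{r,R}$ is essentially the paper's argument (Poisson count, Campbell/Mecke formula, split of the mark integral at $v_\ast=u^{-1}R^{-d/\gamma}$, dichotomy at $\gamma\delta\lessgtr 1$), and that part is sound, modulo the remark that the extra $\log R$ in $\pi_{r,R}$ for $\delta>1/\gamma$ comes from integrating $u\mapsto u^{-1}g(R)\wedge 1$ over the inner mark, not from the borderline $v$-integral. The genuine gap is in your two-edge upper bound. You propose to apply Markov's inequality to the expected number of two-edge paths, i.e.\ to the four-fold integral with two factors of $\rho$. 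That expectation is useless in exactly the regimes where the two-edge term matters: integrating out the far endpoint $(y,v)$ gives the \emph{mean} number of long edges from the intermediate vertex, which is of order $w^{-1}g(R)$ (for $\delta\ge 1/\gamma$) resp.\ $w^{-\gamma\delta}g(R)$ (for $\delta<1/\gamma$), without any truncation at $1$; the remaining $w$-integral then behaves like
\[
u^{-\gamma}\int_0^1 w^{-\gamma}\,\big(w^{-1}\wedge w^{-\gamma\delta}\big)\,g(R)\,\de w ,
\]
which diverges whenever $\gamma(1+\delta)\ge 1$, i.e.\ precisely for $\delta\ge \tfrac1\gamma-1$. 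The reason is size-biasing: a vanishingly unlikely, arbitrarily strong intermediate vertex contributes an enormous expected number of second hops, so the expected path count does not control the probability. The paper circumvents this by not counting paths but counting intermediate vertices that are incident to at least one long edge: it first splits off the event that the \emph{first} edge already has length $\ge R/2$ (contributing $\pi_{(u),R/2}$), and then bounds the remainder by $\E[N_R]=\int\!\!\int \P((0,u)\sim(x,v))\,\pi_{(v),R/2}\,\de v\,\de x$, where the second hop enters through the probability $\pi_{(v),R/2}$ (capped at $1$ via \eqref{asymp:1e-escape-probabilities}) rather than through its mean. Your remark that the inner integral ``is controlled by \eqref{asymp:1e-escape-probabilities}'' implicitly assumes this restructuring, but it is not what Markov applied to the raw path count gives; without it the upper bound fails. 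The same restructuring (with $\pi_{r,(x,v)}$ in place of the edge kernel) is needed for $\pi^{(2)}_{r,R}$.

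A secondary, fixable weakness concerns your lower bounds. A ``standard second-moment computation'' for the one- and two-edge crossing counts does not go through as stated, because the pair terms involve $\E[V^{-2}]$-type mark integrals which are infinite; one must truncate the marks (or, as the paper does, work with the strongest vertex in each region, or with an explicitly constructed region $A$ in which any Poisson point connects both ways with probability of order one). This is more than book-keeping at the boundary cases: the factors $\log R$ (at $\gamma\delta\ge 1$ for $\pi_{r,R}$), $(\log r)^\gamma$ (at $\delta=1/\gamma$) and $\log R$ (at $\delta=1/\gamma-1$) are only recovered if the truncation window is chosen so that the surviving mark range still has logarithmic mass, which is exactly where the paper invests a separate, more delicate argument; your proposal acknowledges the issue but does not supply the mechanism.
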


{\renewcommand{\arraystretch}{2}%
\begin{table}[h]
\begin{NiceTabular}{| l | l || l | l  |  }[hvlines]
 Regime &  Order of $g(R)$ \phantom{XX} &  Regime &  Order of $h(R)$ \phantom{XX} \\
 $\infty \ge \delta > 1/\gamma$ & $R^{d(1 - 1/\gamma)}$ &
 $\infty \ge \delta > 1/\gamma$ & $R^{d(1-\gamma)(1 - 1/\gamma)}$ 
 \\
 $\delta = 1/\gamma$ & $R^{d(1-\delta)}\log R$ &
 $\delta = 1/\gamma$ & $R^{d(1 -\gamma)(1-\delta)}(\log R)^{1-\gamma} $
 \\
\Block{3-1}{$2< \delta < {1}/{\gamma}$} & \Block{3-1}{$R^{d(1-\delta)}$} &
${1}/{\gamma} - 1 < \delta < {1}/{\gamma}$ & $R^{d(1-1/\gamma)(1-1/\delta)}$ \\
  &  &
 $\delta={1}/{\gamma} - 1 $ & $R^{d(1-\delta)}\log R$ \\
& &
 $2<\delta< {1}/{\gamma} - 1 $ & $R^{d(1-\delta)}$ \\
\end{NiceTabular}
\caption{Overview of the values of $g(R):=\pi_{(1),R}$ and $h(R):=\pi^{(2)}_{(1),R}$. }
\label{tab:gandh}
\end{table}}

\begin{remark}
Note that the asymptotics $\pi_{^{(u), R}}^{_{(2)}} \asymp \pi_{(u),R} + u^{-\gamma}h(R)\wedge 1$ actually holds for all values of $\gamma$, $\delta$, however in the case $\delta<1/\gamma-1$ where we have $h(R)\asymp g(R)$, the second term $u^{-\gamma}h(R)$ is always dominated by the first term $u^{-\gamma \delta} g(R)$, and therefore can be removed. In that case, the one and two-edge crossing probabilities have the same asymptotics on the whole domain $\mathcal D$, i.e.
    $\pi_{^{r,R}}^{_{(2)}}\asymp \pi_{r,R}$ and $\pi_{^{(u),R}}^{_{(2)}}\asymp \pi_{(u),R}$.
\end{remark}\pagebreak[3]

For given $\gamma$ and $\delta,$ it is a  simple task to identify which is the dominant term in the asymptotics~\eqref{asymp:2e-escape-probabilities} and~\eqref{asymp:2e-crossing-probabilites}. In the boundary cases $\delta=1/\gamma$ and $\delta=1/\gamma-1$, gives the following corollary, complementing the results already stated in Theorem~\ref{thm:main_R_alpha_R} and~\ref{thm:main_u_R}. 

\begin{corollary}
    \label{cor:2e-probabilities-boundarycases} 
    When $\delta=1/\gamma$, the two-edge crossing probabilities satisfy
    \begin{align*}
        \pi_{(u),R}^{(2)} 
        &\asymp \begin{cases}
                    u^{-\gamma} h(R) , & \text{ on } \{R^{-d}{(\log R)^{\frac{\gamma}{1-\gamma}}}\le u\},\vspace{0.3mm}\\
                    u^{-1} g(R) & \text{ on } \{ R^{d(1-\frac 1 \gamma)} \log R\le u\le R^{-d}{(\log R)^{\frac{\gamma}{1-\gamma}}}\},\vspace{0.3mm}\\
                    1 & \text{ on } \{u\le R^{d(1-\frac 1 \gamma)}\log R\}.
                \end{cases}\\
        \pi_{r,R}^{(2)}
        &
        \asymp \begin{cases}
                    r^{d\gamma}  (\log r)^\gamma h(R) & \text{ on } \{r \leq R (\log R)^{\frac{-1}{d(1-\gamma)}}\},\vspace{0.3mm}\\
                    r^d  g(R) \log R  & \text{ on } \{R (\log R)^{\frac{-1}{d(1-\gamma)}} \leq r \leq R/2 \}.
                \end{cases}
    \end{align*}
    When $\delta=1/\gamma-1>2$, they satisfy
    \begin{align*}
        \pi_{(u),R}^{(2)} 
        &\asymp \begin{cases}
                    u^{-\gamma} h(R) & \text{ on } \{u \geq (\log R)^{-\frac 1 {1- 2\gamma}}\},\vspace{0.3mm}\\
                    u^{-\gamma \delta} g(R) & \text{ on } \{R^{d(\frac 1 {\gamma \delta}-\frac 1 \gamma)} \leq u \leq (\log R)^{-\frac 1 {1- 2\gamma}}\},\vspace{0.3mm}\\
                    1 & \text{ on } \{u\leq R^{d(\frac 1 {\gamma \delta}-\frac 1 \gamma)} \}.
                \end{cases}\\
        \pi_{r,R}^{(2)} 
        &\asymp \begin{cases}
                    r^{\frac d \delta} h(R) & \text{ on } 
                    \{ r^d \leq  (\log R)^{\frac \delta {\delta-1}}\} ,\vspace{0.3mm}\\
                    r^d g(R) & \text{ on } \{(\log R)^{\frac \delta {\delta-1}} \leq r^d  \}.
                \end{cases}
    \end{align*}
\end{corollary}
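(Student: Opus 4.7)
The plan is to deduce Corollary~\ref{cor:2e-probabilities-boundarycases} as a direct algebraic consequence of Proposition~\ref{prop:asymp_escape_crossing_probabilities}. That proposition already supplies the two-edge asymptotics in terms of $\pi_{(u),R}$, $\pi_{r,R}$, $g(R)$ and $h(R)$ for all admissible $(\gamma,\delta)$, so at the boundary values $\delta = 1/\gamma$ and $\delta = 1/\gamma-1$ the task reduces to substituting the corresponding exact orders into~\eqref{asymp:2e-escape-probabilities}--\eqref{asymp:2e-crossing-probabilites}, identifying the dominant summand in each subregime, and locating the thresholds separating these subregimes. No new probabilistic input is required.

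First, for each boundary value of $\delta$ I would read off the asymptotic orders of $g(R)$, $h(R)$, $\pi_{(u),R}$ and $\pi_{r,R}$ from Table~\ref{tab:gandh} and from~\eqref{asymp:1e-escape-probabilities}--\eqref{asymp:1e-crossing-probabilites}. At $\delta = 1/\gamma$ this yields $g(R)\asymp R^{d(1-1/\gamma)}\log R$, $h(R)\asymp R^{d(1-\gamma)(1-1/\gamma)}(\log R)^{1-\gamma}$, together with $\pi_{(u),R}\asymp u^{-1}g(R)\wedge 1$ and $\pi_{r,R}\asymp r^d g(R)\log R$, so the expressions from Proposition~\ref{prop:asymp_escape_crossing_probabilities} specialise to $\pi^{(2)}_{(u),R}\asymp (u^{-1}g(R)+u^{-\gamma}h(R))\wedge 1$ and $\pi^{(2)}_{r,R}\asymp r^d g(R)\log R + r^{d\gamma}(\log r)^{\gamma}h(R)$. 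An analogous reading at $\delta = 1/\gamma-1$ gives $g(R)\asymp R^{d(1-\delta)}$, $h(R)\asymp R^{d(1-\delta)}\log R$, and the corresponding sums with $\pi_{(u),R}\asymp u^{-\gamma\delta}g(R)\wedge 1$ and $\pi_{r,R}\asymp r^d g(R)$.

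Second, I would solve for the crossover points between the summands. For the escape probability at $\delta = 1/\gamma$, setting $u^{-1}g(R)\asymp u^{-\gamma}h(R)$ gives $u^{1-\gamma}\asymp g(R)/h(R)\asymp R^{-d(1-\gamma)}(\log R)^{\gamma}$, hence the crossover $u\asymp R^{-d}(\log R)^{\gamma/(1-\gamma)}$; the cap at $1$ takes over at $u\asymp g(R) = R^{d(1-1/\gamma)}\log R$. For the crossing probability at the same $\delta$, using $\log r\asymp\log R$ within the polynomial-in-$R$ window for $r$, the equation $r^d g(R)\log R\asymp r^{d\gamma}(\log r)^{\gamma}h(R)$ reduces to $(r/R)^{d(1-\gamma)}\asymp(\log R)^{-1}$, giving $r\asymp R(\log R)^{-1/(d(1-\gamma))}$. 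At $\delta = 1/\gamma-1$, invoking the identities $\gamma\delta = 1-\gamma$ and $\gamma(\delta-1) = 1-2\gamma$, setting $u^{-\gamma\delta}g(R)\asymp u^{-\gamma}h(R)$ reduces to $u^{1-2\gamma}\asymp 1/\log R$, and setting $r^d g(R)\asymp r^{d/\delta}h(R)$ reduces to $r^{d(\delta-1)/\delta}\asymp\log R$, producing the stated thresholds $u\asymp(\log R)^{-1/(1-2\gamma)}$ and $r^d\asymp(\log R)^{\delta/(\delta-1)}$.

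The only mild care required is careful bookkeeping of the polylogarithmic factors, which persist as genuine corrections rather than being absorbed into the $\asymp$ constants, together with a quick verification that the stated domain boundaries all lie in $\mathcal D$ so that Proposition~\ref{prop:asymp_escape_crossing_probabilities} applies throughout. I do not foresee any substantial obstacle: the proof is essentially a case analysis of which term wins in each of the three subregimes, and it closes immediately once the thresholds are located.
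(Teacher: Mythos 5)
Your proposal is correct and follows essentially the same route as the paper: the paper also obtains Corollary~\ref{cor:2e-probabilities-boundarycases} by simply specialising the asymptotics of Proposition~\ref{prop:asymp_escape_crossing_probabilities} (with $g$, $h$ from Table~\ref{tab:gandh}) to $\delta=1/\gamma$ and $\delta=1/\gamma-1$ and identifying the dominant summand and crossover thresholds in each subregime. Your threshold computations (including the logarithmic corrections and the use of $\gamma\delta=1-\gamma$, $\gamma(\delta-1)=1-2\gamma$ in the second boundary case) match the stated domains, so nothing further is needed.
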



We conclude this subsection with some more technical results needed in the core arguments of our proofs. This requires us to also look at  the probabilities of having a path from a ball of radius $r>0$ to a given point. For $u\in (0,1]$ and $x \in \R^d$ with $|x|>r$ define
\begin{align*}
	\theta_{r, (x,u)} & :=\P( (x,u) \longleftrightarrow B(0,r)),\\
	\pi_{r, (x,u)} & := \P((x,u) ^{_{\simone}} B(0,r)).
\end{align*}

 \begin{lemma}\label{lem:second_point_fixed}
     Let $\gamma$ and $\delta$ be as in Proposition~\ref{prop:asymp_escape_crossing_probabilities}. Then, the one-edge crossing probability satisfies, for $|x|=R:$
     \begin{align*}
         \pi_{r,(x,u)}\asymp 
         \begin{cases}
         u^{-1} r^d R^{-d/\gamma} \wedge 1  &\text{if }\delta>1/\gamma, \\
         u^{-1} r^d R^{-d \delta}(1+\log_+(uR^{d\delta})) \wedge 1 &\text{if }\delta=1/\gamma, \\
         u^{-\gamma\delta}r^dR^{-d\delta}&\text{if }\delta<1/\gamma.
         \end{cases}
     \end{align*}
 \end{lemma}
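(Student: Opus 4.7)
The event $\{(x,u)\simone B(0,r)\}$ is the event that the independently thinned Poisson process of neighbours of the fixed vertex $(x,u)$ meets $B(0,r)\times(0,1]$, so by the void probability formula it equals $1-\exp(-\mu)$ with
\[
\mu=\mu(u,r,R):=\lambda\int_{B(0,r)}\int_0^1 \rho\bigl(|y-x|^d(uv)^\gamma\bigr)\,dv\,dy,
\]
and therefore $\pi_{r,(x,u)}\asymp \mu\wedge 1$. Since $|x|=R\ge 2r$, for every $y\in B(0,r)$ one has $R/2\le|y-x|\le 3R/2$, so by monotonicity of $\rho$ the integral factorises up to constants as
\[
\mu\asymp r^d\,I(u,R),\qquad I(u,R):=\int_0^1 \rho\bigl(R^d(uv)^\gamma\bigr)\,dv.
\]
It thus suffices to identify $I(u,R)$ up to constants in the three regimes $\delta>1/\gamma$, $\delta=1/\gamma$, $\delta<1/\gamma$.

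The key quantity is the crossover threshold $v^\ast:=R^{-d/\gamma}u^{-1}$ for which $R^d(uv^\ast)^\gamma=1$. If $v^\ast\ge 1$ (equivalently $uR^{d/\gamma}\le 1$), then the argument of $\rho$ stays $\le 1$ over the whole integration range, $\rho$ is bounded from below by $\rho(1)>0$ and from above by $1$, and so $I\asymp 1$; this is absorbed by the truncation with $1$ in the final formulas. Assume now $v^\ast<1$. Using $\rho\asymp 1$ on $\{v\le v^\ast\}$ and \eqref{eq:conditions_rho} on $\{v>v^\ast\}$ (in the hard case, $\rho$ vanishes beyond $v^\ast$, so only the first contribution is present and immediately yields $I\asymp v^\ast$), one obtains
\[
I(u,R)\asymp v^\ast+R^{-d\delta}u^{-\gamma\delta}\int_{v^\ast}^1 v^{-\gamma\delta}\,dv.
\]

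Evaluating the last integral case by case then finishes the proof. If $\gamma\delta>1$ (which includes $\delta=\infty$), the integrand blows up at $v^\ast$ and $\int_{v^\ast}^1 v^{-\gamma\delta}dv\asymp (v^\ast)^{1-\gamma\delta}$; a direct computation of the exponents shows $R^{-d\delta}u^{-\gamma\delta}(v^\ast)^{1-\gamma\delta}=v^\ast$, so the second term is of the same order as the first and $I\asymp v^\ast\asymp R^{-d/\gamma}u^{-1}$. If $\gamma\delta=1$ the integral is logarithmic, equal to $-\log v^\ast=\log(uR^{d\delta})$, and together with $v^\ast=R^{-d\delta}u^{-1}$ this gives $I\asymp R^{-d\delta}u^{-1}(1+\log_+(uR^{d\delta}))$. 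If $\gamma\delta<1$ the integral is $\asymp 1$, and a quick comparison of exponents using $v^\ast<1$ shows that $R^{-d\delta}u^{-\gamma\delta}$ dominates $v^\ast$, yielding $I\asymp R^{-d\delta}u^{-\gamma\delta}$. Multiplying by $r^d$, applying $\pi_{r,(x,u)}\asymp \mu\wedge 1$, and bundling the case $v^\ast\ge 1$ into the truncation at~$1$ produces the three asymptotics in the statement.

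The only nontrivial point is the exponent bookkeeping in the regime $\gamma\delta>1$, where the heavy-tailed integral over $\{v>v^\ast\}$ is of the same order as the bulk mass $v^\ast$; this is what makes the $\delta$-dependence disappear from the formula in that regime and recovers the hard-profile answer. Everything else reduces to elementary one-dimensional integrals and to the reduction $\pi_{r,(x,u)}\asymp \mu\wedge 1$, which is standard.
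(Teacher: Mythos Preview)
Your proof is correct and follows essentially the same approach as the paper: reduce $\pi_{r,(x,u)}$ to $\mathbb E[M'_R]\wedge 1$ for a Poisson count, use $|x-y|\asymp R$ on $B(0,r)$ to factor out $r^d$, and evaluate the remaining one-dimensional integral $\int_0^1\rho(R^d(uv)^\gamma)\,dv$. The paper merely sets up this integral and says the computation is analogous to that for $\pi_{(u),R}$, whereas you carry it out explicitly via the split at $v^\ast=R^{-d/\gamma}u^{-1}$; your case analysis and exponent bookkeeping are all correct.
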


We next provide some upper bounds on three-edge crossing probabilities required for the bootstrap arguments below,  showing that certain three-edge crossing probabilities are not of larger order than the
corresponding two-edge crossing probabilities. 
\begin{lemma}\label{lem:integral_edge_crossing}
Let $\gamma$ and $\delta$ be as in Proposition~\ref{prop:asymp_escape_crossing_probabilities}. Then, we have
\begin{align}  
\int_{\R^d} \int_0^1 \mathbb P \big( (0,u) \sim (x,v) \big) 
\,\pi_{(v),R}^{(2)} \, \de v \, \de x &\lesssim \pi_{(u),R}^{(2)}
& \text{ on } \{ u^{-\gamma} h(R) \leq 1\},
\label{eq:integral_edge_crossing_point}\\
\int_{B(0,R)\setminus B(0,2r)} \int_0^1 \pi_{r,(x,v)}\pi_{(v),R}^{(2)} \, \de v \, \de x & \lesssim \pi_{r,R}^{(2)}.
\label{eq:integral_edge_crossing_ball}
\end{align}
\end{lemma}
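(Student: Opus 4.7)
My plan for Lemma~\ref{lem:integral_edge_crossing} is to prove both bounds by direct computation from the sharp up-to-constants asymptotics for $\pi_{r,(x,v)}$, $\pi^{(2)}_{(v),R}$ and $\pi^{(2)}_{r,R}$ supplied by Lemma~\ref{lem:second_point_fixed} and Proposition~\ref{prop:asymp_escape_crossing_probabilities}. The two left-hand sides are not in themselves natural probabilities but arise as union-bound ingredients in the bootstrap argument of Section~\ref{sec:bootstrap}: both have the form (first-edge factor) times (two-edge escape probability from a generic root to $B(0,R)^\complement$), so the lemma says that averaging against a first-edge factor does not inflate the asymptotic order of the two-edge probability.

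For~\eqref{eq:integral_edge_crossing_point} the key simplification is the scaling identity
\[
\int_{\R^d}\rho\bigl(|x|^{d}(uv)^{\gamma}\bigr)\,\de x \;=\; (uv)^{-\gamma}\int_{\R^d}\rho(|y|^{d})\,\de y,
\]
obtained via the substitution $y=(uv)^{\gamma/d}x$ and valid for both soft and hard profiles. Hence the left-hand side reduces to a constant multiple of $u^{-\gamma}\int_0^1 v^{-\gamma}\pi^{(2)}_{(v),R}\,\de v$, and it then suffices to prove $\int_0^1 v^{-\gamma}\pi^{(2)}_{(v),R}\,\de v\lesssim h(R)$, since Proposition~\ref{prop:asymp_escape_crossing_probabilities} yields $\pi^{(2)}_{(u),R}\gtrsim u^{-\gamma}h(R)$ on the domain $\{u^{-\gamma}h(R)\leq 1\}$. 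Plugging in $\pi^{(2)}_{(v),R}\asymp \bigl(\pi_{(v),R}+v^{-\gamma}h(R)\bigr)\wedge 1$ and splitting the $v$-integration at the thresholds where the caps $\wedge 1$ activate, each piece evaluates explicitly; the convergence of the critical factor $\int v^{-2\gamma}\,\de v$ is guaranteed by the assumption $\gamma<\tfrac12$, while the contribution of $\pi_{(v),R}$ is computed separately in the three regimes for $\delta$, reducing in each case to the algebraic identity $g(R)^{(1-\gamma)/(\gamma\delta)}\asymp h(R)$ (or $g(R)^{1-\gamma}\asymp h(R)$ when $\delta\geq 1/\gamma$), both of which follow from Table~\ref{tab:gandh}. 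The regime $\delta<1/\gamma-1$, where $\pi^{(2)}_{(v),R}\asymp \pi_{(v),R}$, is handled analogously and concluded via the inequality $u^{-\gamma}\leq u^{-\gamma\delta}$ valid on $(0,1]$ because $\delta>1$.

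For~\eqref{eq:integral_edge_crossing_ball} I first integrate $\pi_{r,(x,v)}$ in spherical coordinates over the shell $\{2r<|x|<R\}$, noting that the pointwise bound from Lemma~\ref{lem:second_point_fixed}, of the form $Cv^{-\alpha}r^d|x|^{-\beta}$, must be capped at~$1$ on a near-ball $|x|\leq \varrho_0(v,r)$. This decomposes the $x$-integral into a near-zone contributing a ball volume of order $\varrho_0^d$, and a tail-zone $|x|\in(\varrho_0\vee 2r,R)$ which, because $\beta>d$ in every regime, is dominated by its inner endpoint and contributes of order $v^{-\alpha}r^{2d-\beta}$. Inserting both pieces into the $v$-integral against $\pi^{(2)}_{(v),R}$ and applying the same splitting strategy as in the first bound produces, in each regime, exactly the two summands $\pi_{r,R}$ and $r^{d\gamma}h(R)$ (respectively $r^{d/\delta}h(R)$) appearing in~\eqref{asymp:2e-crossing-probabilites}; the near-zone produces the latter summand, reflecting that intermediate vertices of very small mark see $B(0,r)$ as essentially a single point and reach the exterior via a two-edge path.

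The main obstacle will be the careful bookkeeping across the many regimes defined by the position of $\delta$ relative to the thresholds $1/\gamma-1$, $1/(2\gamma)$ and $1/\gamma$, and especially the boundary cases $\delta\in\{1/\gamma-1,\,1/\gamma\}$, where logarithmic corrections appear in both $\pi^{(2)}_{(v),R}$ and, via the $1+\log_+(vR^{d\delta})$ factor in Lemma~\ref{lem:second_point_fixed}, in $\pi_{r,(x,v)}$, and the integrability thresholds $\gamma\delta=1$, $2\gamma\delta=1$, $\gamma(1+\delta)=1$ become tight. The key algebraic identities $g(R)^{1-\gamma}\asymp h(R)$ for $\delta\geq 1/\gamma$ and $g(R)^{(1-\gamma)/(\gamma\delta)}\asymp h(R)$ for $\delta\leq 1/\gamma$, which I verify directly from Table~\ref{tab:gandh}, ensure that the explicit computations match the stated right-hand sides exactly.
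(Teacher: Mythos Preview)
Your approach is correct and matches the paper's: both integrate out $x$ via the scaling substitution to get $u^{-\gamma}\int_0^1 v^{-\gamma}\pi^{(2)}_{(v),R}\,\de v$, decompose $\pi^{(2)}_{(v),R}\asymp\pi_{(v),R}+(v^{-\gamma}h(R)\wedge1)$, and exploit $\gamma<\tfrac12$ for the convergence of $\int v^{-2\gamma}\,\de v$. The only cosmetic differences are that the paper reuses the already-computed identity $\int_0^1 v^{-\gamma}\pi_{(v),R}\,\de v\asymp h(R)$ from~\eqref{eq:int:N_R} instead of recomputing via your algebraic identities, and for~\eqref{eq:integral_edge_crossing_ball} it handles the $x$-integral by the single change of variables $z=v^{\gamma/d}r^{-1/\delta}x$ (respectively $z=v^{\gamma/d}r^{-\gamma}x$ when $\delta>1/\gamma$), which packages your near-zone/tail-zone decomposition more compactly.
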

\medskip

Lemma~\ref{lem:integral_edge_crossing} is proved in Section~\ref{sec:integral_edge_crossing}.

\subsection{Proof of the one and two-edge crossing probability bounds.}\label{subsec:3.4}

In this section we prove the results stated in Section~\ref{sec:state_one_two_crossing_bounds}
except for Lemma~\ref{lem:integral_edge_crossing}, which will be 
proved in Section~\ref{sec:integral_edge_crossing}.
The probability $\mathbb P_{(x,u)}$ and expectation $\mathbb E_{(x,u)}$ refer to the  vertex set $\mathcal P$ with  the point $(x,u)$ added. We include the case \smash{$\gamma=\frac12$} and $\delta=2$ until further notice.%

\subsubsection*{\textbf{Bounds on $\pi_{(u),R}$}}\label{subsec:bd_pi_u_R}
These bounds are the most straightforward. 
Under $\mathbb P_{(0,u)}$
we write $M_{R}$ for the number of connections of the origin to vertices in $B(0,R)^\complement$. We then have $\pi_{(u),R}=\P_{(0,u)}(M_R>0).$ Note that by construction, $M_R$ is Poisson distributed; therefore $\P_{(0,u)}(M_R>0)\asymp\E_{(0,u)}[M_R]\wedge 1$ and hence it is enough to compute the latter.  By the construction of the graph and Campbell's formula~\cite{Last_Penrose_2017}, we have 
\[\E_{(0,u)}[M_R]=\lambda \int_{B(0,R)^\complement} \int_0^1 \rho(|x|^d (uv)^\gamma) \de v \de x.\]
In the case $\gamma=0$ (which excludes the case $\delta=\infty$) we obtain
\begin{equation}\label{eq:M_R_gamma0}\E_{(0,u)}[M_R]=\lambda \int_{B(0,R)^\complement} \rho(|x|^d ) \de x\asymp R^{d(1-\delta)},\end{equation}
while in the case $\gamma>0$ we can use use the change of variables $|x|=z$ and $w=uvz^{d/\gamma}$ to obtain
\[\E_{(0,u)}[M_R]\asymp \int_R^\infty u^{-1}z^{-1+d(1-\frac{1}{\gamma})} \int_0^{uz^{d/\gamma}} \rho(w^\gamma) \de w \de z.\]
To study this integral, suppose first that $u\leq R^{-d/\gamma}$ and observe that since $\rho\asymp1$ on $(0,1]$, 
we have 
\[\E_{(0,u)}[M_R]\gtrsim\int_R^{u^{-\gamma/d}} u^{-1}z^{-1+d(1-\frac{1}{\gamma})} \int_0^{uz^{d/\gamma}} \rho(w^\gamma) \de w \de z\asymp\int_R^{u^{-\gamma/d}}z^{d-1}\de z\gtrsim1,\]
hence $\pi_{(u),R}\asymp 1$. Suppose next that $u\geq R^{-d/\gamma}$, so that for any $z\geq R$,
\[\int_0^{uz^{d/\gamma}} \rho(w^\gamma) \de w\asymp\begin{cases} 1 &\text{ if }\gamma \delta>1,\\
1+\log(uz^{d\delta}) &  \text{ if } \gamma \delta=1,\\  (uz^{d/\gamma})^{1-\gamma\delta}&\text{ if }\gamma \delta<1.
            \end{cases}\]
Note that this is also valid for the case $\delta=\infty$. Replacing this expression into our previous computation of $\E_{(0,u)}[M_R]$ yields
\begin{equation}\label{eq:M_R_gammapos}
    \E_{(0,u)}[M_R]\wedge1\asymp
            \begin{cases}
                u^{-1} R^{d(1-\frac 1 \gamma)}\wedge1  &\text{ if }\gamma \delta>1,\\
u^{-1} R^{d(1-\delta)} (1+\log_+(uR^{d\delta}) )\wedge1
&  \text{ if } \gamma \delta=1,\\              
                u^{-\gamma \delta} R^{d(1-\delta)}\wedge1 &\text{ if }\gamma \delta<1.
            \end{cases}
\end{equation}

Observe that when $\gamma = 0$, equation~\eqref{eq:M_R_gammapos} reduces to~\eqref{eq:M_R_gamma0}. Setting $u = 1$ in~\eqref{eq:M_R_gammapos} yields the asymptotic behaviour of $g(R)$, and replacing this expression in~\eqref{eq:M_R_gammapos} leads to~\eqref{asymp:1e-escape-probabilities}. These computations are straightforward, except for a minor issue in the case $\gamma\delta = 1$, the function $g(R)$ includes a factor of $\log(R)$, while the corresponding factor in~\eqref{eq:M_R_gammapos} is $1 + \log_+(uR^{d\delta})$. This discrepancy is irrelevant when $u \leq R^{d(1-\delta)}$, since in that regime we have $\pi_{(u),R} \asymp 1$. On the other hand, if $u \geq R^{d(1-\delta)}$, then $R^d \leq uR^{d\delta} \leq R^{d\delta}$, so we indeed obtain $1 + \log_+(uR^{d\delta}) \asymp \log(R)$, as required.

\subsubsection*{\textbf{Bounds on $\pi_{r,(x,u)}$}}
To obtain bounds for $\pi_{r,(x,u)}$ we follow the same program as the one in the previous section, by writing $M_R'$ for the number of connections of $(x,u)$ to vertices in $B(0,r)$, so that $\P_{(x,u)}(M_R'>0)\asymp\E_{(x,u)}[M_R']\wedge 1$. Using again Campbell's formula we arrive at
\begin{align*}
    \E_{(x,u)}[M_R']&=\lambda \int_{B(0,r)} \int_0^1 \rho(|x-y|^d (uv)^\gamma) \de v \de y\\
    &\asymp \int_{B(0,r)} \int_0^1 \rho(R^d (uv)^\gamma) \de v \de y\asymp r^d\int_0^1 \rho(R^d (uv)^\gamma) \de v,
\end{align*}
where we have used that $2r\leq R$ on any domain (as defined in~\eqref{def:domain_D}), so any vertex $(y,v)\in B(0,r)\times[0,1]$ satisfies $|x-y|\asymp R$. The remaining calculations are analogous to the ones given in the previous section, so we omit the details.

\subsubsection*{\textbf{Bounds on $\pi_{r,R}$}}
\label{subsec:bd_r_R}

For given $r$ and $R\ge 2r,$ we observe that any direct connection from $B(0,r)$ to $B(0,R)^\complement$ is of length at least $R/2$. Therefore, we can bound $\pi_{r,R}$ by the expected number of vertices in $B(0,r)$ incident to such a long edge, namely
\[
\pi_{r,R}\le {\mathrm{Vol}}(B(0,r)) \int_0^1 \pi_{(u),R/2}\, \de u \asymp
\begin{cases}
    r^d R^{d(1-\frac 1 \gamma)} \log R &\text{ if }\gamma\delta>1,\\
r^d R^{d(1-\delta)} (\log R)^2 &\text{ if }\gamma\delta=1,\\   
    r^d R^{d(1-\delta)}  &\text{ if }\gamma\delta<1.
\end{cases}
\]
To obtain matching lower bounds in the case $0\leq \gamma\delta<1$ we work conditionally on the locations and weights of all vertices in $B(0,r)$ and in $B(0,2R)\setminus B(0,R)$. With  probability of order one, we find order $r^d$ such vertices in the first set, and order $R^d$ such vertices in the second set. Now each of the $r^dR^d$ possible connections hold independently with probability of order $R^{-d\delta}$, even in the worst case scenario where the vertices have weights one and are at distance $2R+r$ apart, so indeed we can find a connection with probability of order $r^d R^{d(1-\delta)}$.
This analysis covers the particular case where $\delta=2$ and $\gamma<\frac{1}{2}$, giving a lower bound of order \smash{$(r/R)^{d}$} which is of order $1$ on $\{R=2r\}$, thus proving Lemma~\ref{lem:quant}.
\pagebreak[3]
\smallskip

In the case $1<\gamma\delta\leq\infty$ we bound $\pi_{r,R}$ from below by the probability of the event where the strongest vertex in $B(0,r)$ is connected to the strongest vertex in $B(0,2R) \setminus B(0,R)$. Call $U$ and $V$ their respective marks, and observe that they are connected, {conditionally on $U$ and $V$} with probability  of order $(UV)^{-\gamma\delta}R^{-d\delta}\wedge 1$ which is of order $1$ as soon as $UV<R^{-d/\gamma}$. To obtain a lower bound for the probability of this latter event, let $N_r$ and $N_R$ denote the number of vertices in $B(0,r)$ and $B(0,R)^{\complement}$, respectively. Since these variables are Poisson distributed with mean $\E N_r\asymp r^d$ and $\E N_R\asymp R^d$, with probability of order one we have $cr^{d}<N_r< Cr^{d}$ and $cR^{d}<N_R<CR^{d}$ for some constants $c$ and $C$. Assume that this event holds, and further restrict the condition $UV<R^{-d/\gamma}$ to
\[\{r^dR^{-d/\gamma}\leq V\leq R^{-d},\,R^{-d/\gamma}\leq U\leq R^{-d/\gamma}/V\}.\]
It can be checked that from the bounds of $N_r$ and $N_R$, on this set $U$ and $V$ have density of order $r^dR^d$, so we arrive at
\[\P(UV<R^{-d/\gamma})\gtrsim \int_{r^dR^{-d/\gamma}}^{R^{-d}}\int_{R^{-d/\gamma}}^{R^{-d/\gamma}/v}r^dR^d\de u\de v\asymp r^d R^{d(1-1/\gamma)}\log (R^{1/\gamma-1}/r),\]
which matches the upper bound in the case $\gamma\delta>1$ and $\gamma<\frac{1}{2}$, since $\log(R^{1/\gamma-1}/r)\asymp\log R$. Observe that if $\gamma=\frac{1}{2}$ this expression becomes $(r/R)^d\log(R/r)$, which is of constant order on $\{R=2r\}$, thus proving Lemma~\ref{lem:quant} in the case $\gamma=\frac{1}{2}$.
\smallskip

In the case $\gamma\delta=1$ we still consider the strongest vertices in $B(0,r)$ and in $B(0,2R)\setminus B(0,R)$, which are connected with conditional probability of order $(UV)^{-1}R^{-d\delta}$, however, working on the event $\{UV<R^{-2d}\}$ does not give a matching lower bound anymore. Instead, we compute $\E[(UV)^{-1}R^{-d\delta}]$ which, when restricted to the event
\[
\{R^{-d(\frac{\delta-1}2)}r^{-d/2}\le U\le r^{-d}\}\cap \{ R^{-d(\frac{\delta+1}2)}r^{d/2}\le V\le R^{-d}\},
\]
gives a lower bound of order 
$
R^{-d\delta} r^dR^d\log(R^{\delta-1}/r)^2.
$
Since the case $\delta=2$ was already treated, we can assume that $\delta>2$, so the bound becomes of order \smash{$r^dR^{-d(\delta-1)}  (\log R)^2$}, thus matching the upper bound.
\smallskip

Note that this analysis not only completes the proof of~\eqref{asymp:1e-crossing-probabilites}, but also of Lemma~\ref{lem:quant}, which shows that for $\delta=2$ or $\gamma=\frac{1}{2}$ there is no subcritical quantitative phase. We will henceforth assume that $\delta>2$ and \smash{$\gamma<\frac12$}.

\subsubsection*{\textbf{Bounds on \smash{$\pi_{(u),R}^{(2)}$}}}

To attain an upper bound we begin by observing that in order to have a two-edge connection at distance $R$, the vertex $(0,u)$ must either be incident to an edge of length at least $R/2$, or have a neighbour within $B(0,R/2)$ which is incident to such an edge. We write $N_R$ for the number of such neighbours, that is,
\[
N_R=\#\{\x=(x,v) \in \poisson: \x\sim(0,u),\,|x|\leq R/2, \,\x\simone B(\x,R/2)^\complement\},
\]
so we obtain
    $\pi_{^{(u),R}}^{_{(2)}}\le \pi_{(u),R/2}+ \P_{(0,u)}(N_R>0)$.
Similarly, we attain a lower bound by observing that in order to have a two-edge connection at distance $R$, it is enough to find a neighbour of $(0,u)$ within $B(0,R)$ incident to an edge of length $2R$ or more. Defining
\[
N'_R=\#\{\x=(x,v) \in \poisson: \x\sim(0,u),\,|x|\leq R, \,\x\simone B(\x,2R)^\complement\}
\]
we can write the analogous lower bound
\begin{align}\label{eq:lower2esc}
    \pi_{(u),R}^{(2)}&\ge \pi_{(u),R}+ \P_{(0,u)}(N'_R>0).
\end{align}
By Lemma~\ref{lem:renormalization_constants} we have \smash{$\pi_{(u),R/2}\asymp \pi_{(u),R}$}, so it remains to provide an upper bound for $\P_{(0,u)}(N_R>0)$ and a lower bound for $\P_{(0,u)}(N'_R>0)$. For the lower bound, 
we introduce a set $A_{(u),R}\subset B(0,R)\times (0,1]$ such that, if it contains any vertex $\x=(x,v)\in \poisson$, then with probability of order one it belongs to $N'_R$. More precisely, the vertex $\x$ should be sufficiently strong to ensure $\P((0,u)\sim (x,v))\asymp 1$ as well as $\pi_{(v),2R}\asymp 1$, thus giving
\begin{equation}\label{eq:lowersetA}
\P_{(0,u)}(N'_R>0)\gtrsim\P(\poisson \cap A_{(u),R}\ne \emptyset)\asymp \mathrm{Vol}(A_{(u),R})\wedge 1.
\end{equation}
The expectation of $N_R$ provides an upper bound for $\P_{(0,u)}(N_R>0)$ that we can compute  by applying Mecke's formula,
\begin{align}
    \E_{(0,u)}[N_R]&
    \label{int:centre_edge_longedge}
    = \int_{\R^d}\int_0^1 \P((0,u)\sim (x,v))\pi_{(v),R/2} \de v \de x\\
    &\asymp \int_{\R^d}\int_0^1 (u^{-\gamma \delta} v^{-\gamma \delta}|x|^{-d\delta} \wedge 1)\pi_{(v),R/2} \de v \de x 
    \asymp u^{-\gamma} \int_0^1 v^{-\gamma} \pi_{(v),R/2}\ \de v.
    \label{eq:int:N_R} 
\end{align}
Note that \eqref{eq:int:N_R} is also valid when $\gamma=0$ and when $\delta=\infty$. We now provide the derivation of the value of this integral
along with the corresponding definition of the set $A_{(u),R}$.
We obtain asymptotic expressions that yield both the asymptotics for $h(R) = \pi^{_{(2)}}_{^{(1),R}}$ given in Table~\ref{tab:gandh} (by taking $u = 1$), and those in~\eqref{asymp:2e-escape-probabilities}.\pagebreak[3]\medskip

\noindent 
$\bullet$ In the case $\gamma \delta>1$ we have $\pi_{(v), R} \asymp v^{-1}R^{d(1 - 1/\gamma)}\wedge1$, so
\begin{align*}
    \E_{(0,u)}[N_R]&\asymp u^{-\gamma} \int_0^{R^{d(1-1/\gamma)}} v^{-\gamma} \, \de v+ u^{-\gamma} R^{d(1-1/\gamma)}\int_{R^{d(1-1/\gamma)}}^{\infty} v^{-1-\gamma} \de v
    \asymp  u^{-\gamma} R^{d(1-\gamma)(1-\frac 1 \gamma)}.
\end{align*}
This concludes the upper bound in~\eqref{asymp:2e-escape-probabilities}, as $h(R) \asymp R^{d(1-\gamma)(1-\frac 1 \gamma)}$.
Note that $u^{-\gamma}h(r)\lesssim\pi_{(u),R}$ on the domain $\{u\leq R^{-d}\}$, so a matching lower bound can be obtained by neglecting $\P_{(0,u)}(N'_R>0)$ in~\eqref{eq:lower2esc}. To obtain a matching lower bound on $\{u\geq R^{-d}\}$ we define 
\[
A_{(u),R}:= B(0, u^{-\gamma/d}R^{1-\gamma})\times \big(0,R^{d(1-1/\gamma)}\big).
\]
which is contained in $B(0,R)\times(0,1]$, and satisfies 
$\mathrm{Vol}(A_{(u),R})\asymp u^{-\gamma}h(R)$. It can be checked directly that both $\P((0,u)\sim (x,v))\asymp 1$ and $\pi_{(v),2R}\asymp 1$ for any $(x,v)\in A_{(u),R}$ so that~\eqref{eq:lowersetA} holds. Note the definition of the set $A_{(u),R}$ is directly inspired by looking at which domain provides the main contribution to the integral~\eqref{eq:int:N_R}.
\smallskip

\noindent
$\bullet$ The case $\gamma \delta=1$ is treated analogously, where now
$\pi_{(v), R} \asymp v^{-1}R^{d(1 - 1/\gamma)}\log(R)\wedge1$ so that similar computations give
\begin{align*}
    \E_{(0,u)}[N_R]&\asymp u^{-\gamma} R^{d(1-\gamma)(1-\delta)} \log^{1-\gamma} R.
\end{align*}
The lower bound is obtained by following the same approach as in the case $\gamma\delta > 1$, splitting the analysis into the regions $\{u \leq R^{-d}\}$ and $\{u \geq R^{-d}\}$, and applying~\eqref{eq:lowersetA} in the latter. In this case, the set $A_{(u),R}\subseteq B(0,R)\times(0,1]$ is defined as
\[
A_{(u),R}:= B(0, u^{-\gamma/d}R^{1-\gamma}\log^{-\gamma/d} R)\times \big(0,R^{d(1-\delta)} \log R\big).
\] 

\noindent
$\bullet$ In the case $\gamma \delta<1$ we have $\pi_{(v), R} \asymp v^{-\gamma\delta}R^{d(1 - \delta)}\wedge1$, therefore,
\begin{align*}
    \int_0^1 v^{-\gamma} \pi_{(v),R/2} \de v 
    &\asymp \int_0^{R^{d(\frac 1 {\gamma \delta}-\frac 1 \gamma)}} v^{-\gamma}\de v
            + R^{d(1-\delta)} \int_{R^{d(\frac 1 {\gamma \delta}-\frac 1 \gamma)}}^1 v^{-\gamma-\gamma \delta}\de v\\
    &\asymp \left.
    \begin{cases}
        R^{d(1+\frac 1 {\gamma \delta}-\frac 1 \gamma -\frac 1 \delta)} &\text{ if } \delta>\frac 1 \gamma -1\\
        R^{d(1-\delta)}\log R &\text{ if } \delta=\frac 1 \gamma -1\\
        R^{d(1-\delta)} &\text{ if } \delta<\frac 1 \gamma -1
    \end{cases} \right\}
    \asymp h(R).
\end{align*}
Inserting this expression into~\eqref{eq:int:N_R} yields the desired upper bound. 
It remains to find matching lower bounds, which we do by considering each sub-case separately.\pagebreak[3]
\smallskip

\noindent
$\blacktriangleright$ In the sub-case $\delta<\frac 1 \gamma -1$ we have $u^{-\gamma\delta}h(R)\lesssim \pi_{(u),R}$ so we obtain a lower bound by simply neglecting the term $\P_{(0,u)}(N'_R>0)$ in~\eqref{eq:lower2esc}. 
\smallskip

\noindent
$\blacktriangleright$ In the sub-case $\frac 1 \gamma -1<\delta<\frac 1 \gamma$ the lower bound is obtained by following again the same approach as in the case $\gamma\delta > 1$, this time splitting the analysis into the regions $\{u \leq R^{-d/(\gamma\delta)}\}$ and $\{u \geq R^{-d/(\gamma\delta)}\}$. In the former domain, it can be checked that $\pi_{(u),R}\gtrsim u^{-\gamma}h(R)$, whereas in the latter, the lower bounds follow from~\eqref{eq:lowersetA}, with 
\[
A_{(u),R}:= B(0, u^{-\gamma/d}R^{1-\frac 1 \delta}) \times (0,R^{d(\frac 1 {\gamma \delta}-\frac 1 \gamma)}).
\]
\noindent
$\blacktriangleright$ The lower bound in the final sub-case, $\delta=1/\gamma -1$, requires a slightly more delicate argument than the previous ones. First, note that $\pi_{(u),R}\gtrsim u^{-\gamma}h(R)$ on \smash{$\{u\leq(\log R)^{-\frac 1 {1- 2\gamma}}\}$} so on this domain we obtain a matching lower bound by simply neglecting the term $\P_{(0,u)}(N'_R>0)$ in~\eqref{eq:lower2esc}. To obtain a matching lower bound on \smash{$\{u\geq(\log R)^{-\frac 1 {1- 2\gamma}}\}$} we abbreviate \smash{$z_0=(u^{-\gamma\delta}R^{d(\delta-1)})^{\frac{1}{d\delta}}/2$} and define the set
\[
A_{(u),R}:= \{(x,v):R^{\frac{1}{2\delta}}\leq |x|\leq z_0,\text{ and }\,R^{\frac{d(1-\delta)}{\gamma\delta}}\leq v\leq u^{-1}|x|^{-\frac{d}{\gamma}}\}.
\]
Observe that Vol$(A_{(u),R})\asymp u^{-1}R^{-d/2}$. As $u\geq (\log R)^{-\frac 1 {1- 2\gamma}}$ we get that $u^{-1}R^{-d/2}\lesssim 1$, and hence $\mathbb P(|\poisson\cap A_{(u),R}|=1)\asymp \text{Vol}(A_{(u),R})$. We work on this event and denote by $(X,V)$ the unique vertex in $A_{(u),R}$, whose location is uniformly distributed within this set. Conditional on the location of $(X,V)$, we have \[\pi_{(V),2R}\asymp V^{-\gamma\delta}R^{d(1-\delta)}\quad\text{and}\quad\P((0,u)\sim(X,V))\asymp 1.\]
Since $-\gamma\delta=\gamma-1$, the unconditional probability of $N'_R>0$ is then lower bounded by
\begin{align*}
    \P\big(N_R'>0\big)&\geq \P\big(|\poisson\cap A_{(u),R}|=1\big)\iint_{A_{(u),R}}\frac{1}{\text{Vol}(A_{(u),R})}\P((0,u)\sim(x,v))\pi_{(v),2R}\de v\de x\\&\asymp \iint_{A_{(u),R}}v^{\gamma-1}R^{d(1-\delta)}\de v\de x\asymp u^{-\gamma}R^{d(1-\delta)}\log R,
\end{align*}
as wanted.\pagebreak[3]
\medskip

\subsubsection*{\textbf{Bounds on $\pi_{r,R}^{(2)}$}}

These bounds are relatively similar to the bounds of \smash{$\pi_{(u),R}^{(2)}$}, so we treat them more briefly. We provide asymptotics available on the domain $\{R\ge 8r\}$, while we use Lemma~\ref{lem:renormalization_constants} to generalize the result to the whole $\mathcal D$. 
In order to have a two-edge connection from $B(0,r)$ to $B(0,R)^\complement$, we must either have a direct connection from $B(0,2r)$ to $B(0,R/2)^\complement$, or we can find an intermediate vertex $x\in B(0,R/2)\setminus B(0,2r)$, connected via a direct edge to both $B(0,r)$ and $B(0,R)^\complement$. Writing $N_R$ for the number of such intermediate vertices, we then have
\begin{align*}
    \pi_{r,R}^{(2)}&\le \pi_{2r,R/2}+ \P(N_R>0)
    \lesssim  \pi_{r,R}+\E[N_R],
\end{align*}
using Lemma~\ref{lem:renormalization_constants} and $R\ge 8r$ to obtain the second line. Besides, we have
\begin{equation}\label{eq:lower2cross}
\pi_{r,R}^{(2)}\ge \pi_{r,R}+ \P(N'_R>0),
\end{equation}
where now
\[
N'_R=\#\{\x=(x,v) \in \poisson, \exists \y,\z\in \poisson, |z|<r,|x|<R, |y-x|\ge 2R, \z\sim \x \sim \y\}.
\]
Using Mecke's formula and observing that the connection to $B(0,R)^\complement$ must be of length at least $R/2$ and of course different from the connection to $B(0,r)$, we have
\begin{equation*}
\E[N_R] \asymp \lambda \int_{B(0,R/2)\setminus B(0,2r)} \int_0^1 \pi_{r,(x,v)} \pi_{(v),R/2} \de v \de x.
\end{equation*}
The computation of this integral is similar to the one from the previous subsection, but requires considering additional cases, so we leave the details to the reader. We obtain here that
\begin{equation}\label{eq:expectNR}
\E[N_R]\asymp
\begin{cases}
            r^{d\gamma} R^{d(1-\gamma)(1-\frac 1 \gamma)} &\text{ if } \gamma\delta>1, \vspace{0.3mm}\\
        r^{d\gamma} R^{d(1-\frac{1}{\gamma})(1-\frac{1}{\delta})} (\log R)^{1-\gamma} (\log r)^\gamma
     &\text{ if } \gamma\delta=1, \vspace{0.3mm}\\
            r^{\frac{d}{\delta}}R^{d(1-\frac{1}{\gamma})(1-\frac{1}{\delta})} & \mbox{ if } 1>\gamma\delta>1-\gamma, \vspace{0.3mm}\\
            r^{\frac{d}{\delta}}R^{d(1-\frac{1}{\gamma})(1-\frac{1}{\delta})}\log(R/r) & \mbox{ if } \gamma\delta=1-\gamma, \vspace{0.3mm}\\
            r^{d(\frac{1}{\gamma\delta}+\delta-\frac{1}{\gamma})}R^{d(1-\delta)} & \mbox{ if } 1-\gamma>\gamma\delta>1/2,\vspace{0.3mm}\\
            r^{d(2-\delta)}R^{d(1-\delta)} \log r & \mbox{ if } \gamma\delta=1/2,\vspace{0.3mm}\\
            r^{d(2-\delta)}R^{d(1-\delta)} & \mbox{ if } 1/2>\gamma\delta.
        \end{cases}
\end{equation}
From this computation, we conclude that $\pi_{r,R} + \mathbb{E}[N_R]$ coincides with the expression in \eqref{asymp:2e-crossing-probabilites} whenever $\delta\geq 1/\gamma-1$. In the borderline case $\delta=1/\gamma-1$, the claimed upper bound follows by bounding the $\log(R/r)$ term in $\mathbb{E}[N_R]$ by $\log R$. Finally, when $\delta < 1/\gamma - 1$, the bound follows from the fact that the expressions obtained for $\mathbb{E}[N_R]$ are of smaller order than $\pi_{r,R}\asymp r^{d} R^{d(1 - \delta)}$.
\smallskip

We now turn to the lower bounds for $\pi_{r,R}^{(2)}$, starting with the observation that in the case $\delta<1/\gamma-1$ the two-edge and the one-edge crossing probabilities coincide, so the lower bounds are obtained by simply ignoring the term $\P(N'_R>0)$ in~\eqref{eq:lower2cross}. When $\delta>1/\gamma-1$, we obtain matching lower bounds by following an analogous approach to that in the previous section, namely, by constructing a set $A_{r,R}$ such that any vertex placed within it is simultaneously connected to $B(0,r)$ and to $B(0,R)^{\complement}$ with probability of order one. Depending on the relation between $\gamma$ and $\delta$, the set $A_{r,R}$ is defined as
\[
    \begin{cases}
    B(0,r^\gamma R^{1-\gamma})\setminus B(0,r) \times (0,R^{d(1-1/\gamma)}) & \text{on } \{r\le R (\log R)^{-\frac 1 {d(1-\gamma)}}\}  \\ & \text{if } \delta>\frac1\gamma , \\
    B\big(0,r^\gamma R^{1-\gamma}(\frac{\log r}{\log R})^{\gamma/d}\big)\setminus B(0,r) \times (0,R^{d(1-\delta)}\log R)& \text{on } \{r\le R (\log R)^{-\frac 1 {d(1-\gamma)}}\}\\ &   \text{if } \delta=\frac1\gamma , \\
    \left(B\big(0,r^{\frac 1 \delta} R^{1-\frac 1\delta}\big)\setminus B(0,r) \right)\times \big(0,R^{d(\frac 1 {\gamma \delta}-\frac 1 \gamma)}\big) & \text{on } \{r\le R^{1+\delta-\frac 1 \gamma}\} \\ & \text{if } \frac {1} \gamma-1< \delta<\frac 1 \gamma.
    \end{cases}
\]

Finally, the case $\delta=1/\gamma-1$ only needs to be analyzed on the domain $\{ r^d \leq  (\log R)^{\frac \delta {\delta-1}}\}$, using the same approach as in the previous section for this specific case. This time we consider the set
\[
A_{r,R}:= \{(x,v):r\leq|x|\leq (\tfrac{r}{R})^{\frac{1}{\delta}}R,\text{ and }\,R^{d(1-\delta)}\leq v^{\gamma\delta}\leq r^d|x|^{-d\delta}\},
\]
and observe that Vol$(A_{r,R})\asymp r^{d(\gamma\delta-\delta+1)/(\gamma\delta)}$. Since \smash{$\gamma=\frac{1}{\delta+1}$} in this case, it follows that the exponent is negative, and hence \smash{$r^{d(\gamma\delta-\delta+1)/(\gamma\delta)}\lesssim 1$}, so $\mathbb P(|\poisson\cap A_{r,R}|=1)\asymp \text{Vol}(A_{r,R})$. As in the previous section, we denote by $(X,V)$ the unique vertex in $A_{r,R}$, whose location is uniformly distributed within this set. Conditional on $(X,V)$, we have \[\pi_{(V),2R}\asymp V^{-\gamma\delta}R^{d(1-\delta)}\quad\text{and}\quad\pi_{r,(X,V)}\asymp 1,\]
so the unconditional probability of $N'_R>0$ is then lower bounded by
\begin{align*}
    \P\big(N_R'>0\big)&\geq \P\big(|\poisson\cap A_{r,R}|=1\big)\iint_{A_{(u),R}}\frac{1}{\text{Vol}(A_{r,R})}\pi_{r,(x,v)}\pi_{(v),2R}\de v\de x\\&\asymp \iint_{A_{r,R}}v^{\gamma-1}R^{d(1-\delta)}\de v\de x\asymp r^{\frac{d}{\delta}}R^{d(1-\delta)}\log R,
\end{align*}
thus giving the desired lower bound.

\subsection{Proof of Lemma~\ref{lem:integral_edge_crossing}}
\label{sec:integral_edge_crossing}
We begin the proof of  \eqref{eq:integral_edge_crossing_point} by recalling from \eqref{asymp:2e-escape-probabilities} that $\pi^{_{(2)}}_{^{(u),R}}\asymp \pi_{(u),R}+(u^{-\gamma}h(R)\wedge1)$. Looking at the expressions~\eqref{int:centre_edge_longedge} and~\eqref{eq:int:N_R} and their computation already made in the bound of $\pi_{^{(u),R}}^{_{(2)}}$, we deduce both
\begin{align*}
\int_{\R^d} \int_0^1  \mathbb P \big( (0,u) \sim (x,v) \big)   \,\pi_{(v),R} \, \de v \, \de x&\asymp u^{-\gamma}\int_0^1 v^{-\gamma} \,\pi_{(v),R} \, \de v 
\asymp u^{-\gamma} h(R)\\
&\lesssim  \pi_{(u),R}^{(2)}
\phantom{fhojrtt} \text{ on } \{ u^{-\gamma} h(R) \leq 1\},
\end{align*}
and
\begin{align*}
\int_{\R^d} \int_0^1  \mathbb P \big( (0,u) \sim (x,v) \big)   \, \big(v^{-\gamma} h(R)\wedge 1\big) \, \de v \, \de x &\asymp u^{-\gamma}\int_0^1 v^{-\gamma} \,\big(v^{-\gamma} h(R)\wedge 1\big)\, \de v \\
&\lesssim u^{-\gamma} h(R) \int_0^1 v^{-2\gamma} \de v\\
&\lesssim  \pi_{(u),R}^{(2)}
\phantom{fhojrtt} \text{ on } \{ u^{-\gamma} h(R) \leq 1\}.
\end{align*}
Combining the two bounds completes the proof of~\eqref{eq:integral_edge_crossing_point}. 
For the proof of \eqref{eq:integral_edge_crossing_ball} we once again use that $\pi^{_{(2)}}_{^{(u),R}}\asymp \pi_{(u),R}+(u^{-\gamma}h(R)\wedge1)$ and recall from \eqref{eq:expectNR} that
$$\int_{B(0,R/2)\setminus B(0,2r)} \int_0^1 
\pi_{r,(x,v)}\pi_{(v),R}
\, \de v \, \de x \asymp \mathbb EN_R\lesssim  \pi_{r,R}^{(2)},$$
where
$N_R=\{ (x,v)\in \mathcal P \colon x\in B(0,R/2)\setminus B(0,2r) 
\text{ with } B(0,r) \sim (x,v) \sim B(x,R) \}.$
It remains to prove that
\begin{equation}\label{eq:ineqhR}\int_{B(0,R/2)\setminus B(0,2r)} \int_0^1 
\pi_{r,(x,v)}(v^{-\gamma}h(R)\wedge1)
\, \de v \, \de x \lesssim  \pi_{r,R}^{(2)},\end{equation}
which we do by considering different cases for $\gamma$ and $\delta$.\smallskip

\noindent
$\bullet$ If $\delta<\frac1\gamma-1$ we have
$\pi^{(2)}_{(v),R} \asymp \pi_{(v),R}$ and 
\eqref{eq:integral_edge_crossing_ball} is proved. \smallskip

\noindent
$\bullet$ If $\frac1\gamma-1\leq \delta<\frac1\gamma$ we have
\begin{align*}
   \pi_{r,(x,v)} &\asymp v^{-\gamma\delta}r^d|x|^{-d\delta}\wedge1, \quad
    \pi_{r,R}^{(2)} \asymp \pi_{r,R}+ r^{\frac d \delta} h(R),
\end{align*}
so using the change of variables $z=v^{\frac{\gamma}{d}}r^{-\frac{1}{\delta}}x$ we can bound the left-hand side of \eqref{eq:ineqhR} by
\[
\int_{\R^d}\int _0^1(v^{-\gamma\delta}r^d|x|^{-d\delta}\wedge1)v^{-\gamma}h(R) \de v \, \de x\asymp r^{\frac{d}{\delta}}h(R)\int_0^1v^{-2\gamma} \de v \int_{\R^d}(|z|^{-d\delta}\wedge1)\de z\asymp r^{\frac{d}{\delta}}h(R).\]
Since $r^{\frac{d}{\delta}}h(R)\lesssim \pi_{r,R}^{(2)}$, \eqref{eq:integral_edge_crossing_ball} is proved.
\smallskip

\noindent
$\bullet$ If $\delta=\frac1\gamma$ we have
\begin{align*}
   \pi_{r,(x,v)} &\asymp v^{-\gamma\delta}r^d|x|^{-d\delta}(1+\log_{+}(v|x|^{d\delta}))\wedge1, \quad
    \pi_{r,R}^{(2)} \asymp \pi_{r,R}+ r^{d\gamma}(\log r)^{\gamma} h(R).
\end{align*}
The proof of \eqref{eq:integral_edge_crossing_ball} is obtained by applying the same change of variables as in the previous case. The upper bound follows from analogous computations and the observation that
\[
\int_{\R^d}(|z|^{-d\delta}(1+\log_{+}(r^d|z|^{d\delta}))\wedge1)\de z\asymp(\log r)^{\gamma}.\]

\smallskip
\noindent
$\bullet$ If $\delta>\frac1\gamma$ we have 
\begin{align*}
   \pi_{r,(x,v)} &\asymp v^{-1}r^d|x|^{-d/\gamma}\wedge1, \quad
    \pi_{r,R}^{(2)} \asymp \pi_{r,R}+ r^{d\gamma} h(R),
\end{align*}
and the proof follows by applying the change of variables $z=v^{\frac{\gamma}{d}}r^{-\gamma}x$ and performing the same computations from the previous cases, which we omit.

\section{Bootstrapping: The key argument}\label{sec:bootstrap}

In this section we provide the key ingredient behind our main result: the upper bounds in Theorem~\ref{thm:two-edges}.  In Section~\ref{subsec:naive_upper_bound} we observe that from Proposition~\ref{prop:annulus_r_2r} we get upper bounds on the annulus crossing and escape probabilities, which imply the bounds in Theorem~\ref{thm:two-edges} on a domain where $r$ is within a constant multiple of $R$ and $u$ is small. In Section~\ref{subsec:geometric_argument}, we present the crucial geometric argument, Proposition~\ref{lem:partition_upper_bound}, which 
allows us to iteratively improve upper bounds, such as those in Section~\ref{subsec:naive_upper_bound}. In Section~\ref{subsec:bootstrap_argument}, we use this as a bootstrap step to recursively enlarge the domain on which Theorem~\ref{thm:two-edges} holds. The proof will be completed 
in Section~\ref{sec:thms} when we use the calculations of Section~\ref{sec:one_two_crossing_bounds} to show that
any domain of the form $\{r\geq R^{\varepsilon}\}$, resp.\ $\{u\leq R^{-\varepsilon}\}$, is covered by the domains generated in finitely many steps of this procedure.

\subsection{The easy bounds} \label{subsec:naive_upper_bound}

First, note the trivial lower bounds 
\begin{align*} 
    \theta_{(u),R} \geq \pi_{(u), R}^{(2)} \quad\text{ and } \quad  \theta_{r,R} \geq \pi_{r,R}^{(2)},
\end{align*}
which hold since restricting paths to have a certain number of edges decreases their probability. These lower bounds hold for all choices of $\delta>2$ and $0\leq\gamma< 1/2$, directly implying the lower bounds in Theorem~\ref{thm:two-edges}. The rest of this section focuses on proving non-trivial upper bounds.\medskip


We recall that while the computations of Section~\ref{sec:one_two_crossing_bounds} were available for all $\lambda>0$, we now assume that we are in the quantitatively subcritical phase, that is $0< \lambda < \widehat \lambda_c$. In particular, in this case we can make use of Proposition~\ref{prop:annulus_r_2r}. 
Recalling also~\eqref{eq:annulus_r_2r_crossing_with_one_or_two_edges}, this proposition is the statement of \eqref{eq:asymp_theta_r_R} in Theorem~\ref{thm:two-edges}, but restricted to the domain $\{\tfrac{R}{2}=r\}$, namely
\begin{equation}\label{eq:asymp_restricted}\theta_{r,R}\asymp \pi_{^{r,R}}^{_{(2)}}\text{ on }\{ r=\tfrac{R}2\}.\end{equation}
Lemma~\ref{lem:renormalization_constants} allows to extend \eqref{eq:asymp_restricted} to a larger domain, namely
\begin{equation}\label{eq:naive_Dzero}
    \theta_{r,R}\asymp \pi_{r,R}^{(2)} \text{ on } D_0,
\end{equation}
where
\begin{equation}\label{eq:def_Dzero}
    D_0:=\{ \tfrac{R}{32} \le r \le \tfrac{R}{2} \}.
\end{equation}
To see this, recall that $\theta_{r, R}$ and $\pi_{r,R}$ are  increasing in $r$. Thus,  on $D_0$, Proposition~\ref{prop:annulus_r_2r} and Lemma~\ref{lem:renormalization_constants} give
$\theta_{r, R} \leq \theta_{R/2,R} \asymp \pi_{R/2, R} \asymp\pi_{R/32, R}\leq \pi_{r,R} \leq \pi_{^{r,R}}^{_{(2)}}$.
Note that using the same arguments we could  extend~\eqref{eq:asymp_restricted} to domains of the form \smash{$\{ c R\le r \le \frac{R}2\}$} 
with $c\in (0,1/2)$ any constant. Here and in what follows $c=1/32$ is a convenient but arbitrary choice.
\medskip

We now turn our attention to $\theta_{(u),R}$, for which there is no result analogous to Proposition~\ref{prop:annulus_r_2r}. The next lemma shows that this proposition can also be used to obtain a version of \eqref{eq:asymp_theta_u_R} on an explicitly defined domain. 
\begin{lemma}\label{lem:naive}
We have, for all $0<u<1$ and $R\ge 4,$
\begin{equation}%
    \theta_{(u), R}  \leq \pi_{(u), R/2} + \theta_{R/2, R} \label{eq:naive_upperbound_u_R}.
\end{equation}
Consequently, we have
\begin{equation}%
    \theta_{(u), R}  \asymp  \pi_{(u), R}^{(2)}  \text{ on } \widetilde D_0,
    \label{eq:naive_u}
\end{equation}
where the domain $\widetilde D_0$ is defined as
\begin{equation}
 \widetilde D_0:=   \begin{cases}
    \{ u^{-1}\ge R^{d } \log R\}& \text{ if }\delta\ge \frac 1 \gamma,\\
    \{ u^{-\gamma \delta}\ge R^d\}& \text{ if }\frac 1 \gamma>\delta.
\end{cases}\label{def:set_tD1}
\end{equation}
\end{lemma}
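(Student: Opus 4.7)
The plan is to establish~\eqref{eq:naive_upperbound_u_R} by a direct decomposition of paths, and then to deduce~\eqref{eq:naive_u} by combining it with Lemma~\ref{lem:renormalization_constants} (scaling of $\pi_{(u),R/2}$), Proposition~\ref{prop:annulus_r_2r} (giving $\theta_{R/2,R}\asymp\pi_{R/2}$), and the explicit one-edge asymptotics of Proposition~\ref{prop:asymp_escape_crossing_probabilities}.

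For the inequality~\eqref{eq:naive_upperbound_u_R}, I would decompose the event $\{(0,u)\longleftrightarrow B(0,R)^\complement\}$ according to whether or not the root $(0,u)$ is incident to an edge whose other endpoint lies in $B(0,R/2)^\complement$. In the first case the event $\{(0,u)^{_{\simone}} B(0,R/2)^\complement\}$ occurs, contributing $\pi_{(u),R/2}$. In the complementary case every neighbour of $(0,u)$ lies in $B(0,R/2)$, so any connecting path from $(0,u)$ to $B(0,R)^\complement$ induces, by removing its first edge, a path from some vertex in $B(0,R/2)$ to $B(0,R)^\complement$; this contributes $\theta_{R/2,R}$. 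A union bound yields~\eqref{eq:naive_upperbound_u_R}.

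For~\eqref{eq:naive_u}, the lower bound $\theta_{(u),R}\geq \pi^{(2)}_{(u),R}$ is immediate from the inclusion of events. For the upper bound, combining~\eqref{eq:naive_upperbound_u_R} with $\pi_{(u),R/2}\asymp\pi_{(u),R}$ (Lemma~\ref{lem:renormalization_constants}) and $\theta_{R/2,R}\asymp\pi_{R/2}$ (Proposition~\ref{prop:annulus_r_2r}) yields $\theta_{(u),R}\lesssim \pi_{(u),R}+\pi_{R/2}$. Since $\pi_{(u),R}\leq \pi^{(2)}_{(u),R}$, it then suffices to verify $\pi_{R/2}\lesssim \pi^{(2)}_{(u),R}$ on $\widetilde D_0$. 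When $\delta\ge 1/\gamma$, Proposition~\ref{prop:asymp_escape_crossing_probabilities} gives $\pi_{R/2}\asymp R^d g(R)\log R$, while on $\widetilde D_0$ we have $R^d\log R\le u^{-1}$, so $\pi_{R/2}\lesssim u^{-1}g(R)$; combining with the trivial bound $\pi_{R/2}\le 1$ one obtains $\pi_{R/2}\lesssim u^{-1}g(R)\wedge 1\asymp \pi_{(u),R}\leq \pi^{(2)}_{(u),R}$. The case $\delta<1/\gamma$ is analogous, now using $\pi_{R/2}\asymp R^d g(R)$ together with the defining condition $R^d\le u^{-\gamma\delta}$ of $\widetilde D_0$.

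The only substantive idea is the geometric decomposition of Part~1; the rest is bookkeeping against the asymptotics of Proposition~\ref{prop:asymp_escape_crossing_probabilities}. The definition of $\widetilde D_0$ is tuned precisely so that the inequality $\pi_{R/2}\lesssim \pi^{(2)}_{(u),R}$ holds by a direct comparison of powers of $R$ and $u$, which characterises the largest domain reachable by this naive argument and motivates the bootstrapping procedure of the later subsections.
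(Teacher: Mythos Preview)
Your proof is correct and follows essentially the same approach as the paper: the same event decomposition for~\eqref{eq:naive_upperbound_u_R}, followed by Proposition~\ref{prop:annulus_r_2r} and the one-edge asymptotics of Proposition~\ref{prop:asymp_escape_crossing_probabilities} to verify $\pi_{R/2}\lesssim\pi^{(2)}_{(u),R}$ on $\widetilde D_0$. The paper packages this last check into Remark~\ref{rem:d0_tilde}, whereas you spell it out explicitly, but the content is identical.
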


\begin{remark}\label{rem:d0_tilde}
    Using the asymptotic expressions given in Proposition~\ref{prop:asymp_escape_crossing_probabilities}, it can be 
    checked that $\widetilde D_0$ is defined precisely such that
    \begin{align}
        \pi_{R}\lesssim \pi_{^{(u), R}}^{_{(2)}} \quad &\text{ on }\widetilde D_0 \nonumber\\
        \pi_{^{(u), R}}^{_{(2)}} \lesssim \pi_R \quad &\text{ on }\widetilde D_0^\complement,\label{ineq:defining_tildeD0_complement}
    \end{align}
    where we recall that $\widetilde D_0^\complement = \mathcal{D}\setminus \widetilde D_0$.
    \end{remark}

\begin{proof}[Proof of Lemma~\ref{lem:naive}.]
Rewriting $\{(0,u)\longleftrightarrow B(0,R)^{\complement}\}$ in terms of $\{(0,u)^{_{\simone}} B(0,R/2)^{\complement} \}$ and its complement, we obtain the inclusion 
\begin{align*}
    \{(0,u)\longleftrightarrow B(0,R)^{\complement}\} \subseteq \{(0,u)^{_{\simone}} B(0,R/2)^{\complement} \} \, \cup\,  \{B(0,R/2)\longleftrightarrow B(0,R)^{\complement}\},
\end{align*}
which gives the upper bound \eqref{eq:naive_upperbound_u_R}. Using Proposition~\ref{prop:annulus_r_2r} gives \smash{$\theta_{(u), R}  \lesssim \pi_{(u), R/2} + \pi_{R}$}, so~\eqref{eq:naive_u} follows from the trivial bound $\pi_{(u), R} \leq \pi_{^{(u), R}}^{_{(2)}}$ and Remark~\ref{rem:d0_tilde}.
\end{proof}

{Note that in the previous lemma, \eqref{eq:naive_upperbound_u_R} holds for all $0<u<1$ and $R\geq 4$, and that by monotonicity, $\theta_{r,R}\leq \theta_{R/2,R}$ holds for any $2\leq r\leq R/2$. By combining these inequalities with Proposition~\ref{prop:annulus_r_2r}, we conclude this section with the following upper bounds, which hold on all $\mathcal{D}$,
\begin{equation}
\label{eq: initial_upper}
    \theta_{(u),R} \lesssim \pi_{(u), R}^{(2)}+\pi_{R} \quad\text{ and } \quad  \theta_{r,R} \lesssim \pi_{r, R}^{(2)}+\pi_{R}.
\end{equation}
}
\subsection{The geometric argument} \label{subsec:geometric_argument}

In this section, we use a geometric argument to obtain the following proposition. 
\begin{proposition}
\label{prop:bootstrapping}
    {Let $\cE\colon(4,\infty)\to [0,\infty)$, $R\mapsto\cE_R$ be a map satisfying $\cE_{R}\lesssim\cE_{2R}$, such that}
    \begin{align}
        \theta_{r,R} & \lesssim \pi_{r,R}^{(2)} + \cE_R \label{eq:assmpt_theta_r_R},\\
        \theta_{(u), R} & \lesssim \pi_{(u), R}^{(2)} + \cE_R,\label{eq:assmpt_theta_u_R}
    \end{align}
    holds on $\mathcal{D}$. Then
     {
     \begin{align}
        \theta_{r,R} &  \lesssim \pi_{r, R}^{(2)} + \pi_r\cE_{R},\label{eq:bootstrap_theta_r_R} \\
        \theta_{(u), R} & \lesssim \pi_{(u), R}^{(2)} +  \pi_{r,R}^{(2)}+  (\pi_{(u),r}+
        \pi_{r}) \cE_{R}.\label{eq:bootstrap_theta_u_R}
    \end{align}}
\end{proposition}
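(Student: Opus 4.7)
Both bounds follow the same template: decompose the crossing event into a short-path contribution (paths with at most two edges, yielding the $\pi^{(2)}$-term) and a long-path contribution (paths with at least three edges). The long-path contribution is controlled by Mecke's formula applied to a suitably chosen intermediate vertex of the path, combined with a BK-type independence argument provided by the geometric partition of Proposition~\ref{lem:partition_upper_bound}, the hypothesis~\eqref{eq:assmpt_theta_r_R}--\eqref{eq:assmpt_theta_u_R} on the continuation from this vertex, the integral estimates of Lemma~\ref{lem:integral_edge_crossing}, and the a priori bound of Proposition~\ref{prop:annulus_r_2r}.

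\textbf{First inequality.} I decompose $\{B(0,r)\longleftrightarrow B(0,R)^{\complement}\}$ according to whether a realising path uses at most two edges or at least three. The former contributes $\pi^{(2)}_{r,R}$ by definition. For the latter, I identify the first vertex $(x,v)$ on the path with $|x|>2r$ and apply Mecke's formula. Proposition~\ref{lem:partition_upper_bound} then supplies a factorisation giving an upper bound of the form
\[
\int_{\{|x|>2r\}}\!\int_0^1 \pi_{2r,(x,v)}\,\P\bigl((x,v)\longleftrightarrow B(0,R)^{\complement}\bigr)\,\de v\,\de x.
\]
I bound the second factor by $\pi^{(2)}_{(x,v),R}+\cE_R$ using the hypothesis \eqref{eq:assmpt_theta_u_R} via translation invariance and monotonicity (absorbing the displacement of the centre using $\cE_R\lesssim \cE_{2R}$). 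The $\pi^{(2)}$-part of the integrand integrates to $\pi^{(2)}_{r,R}$ by Lemma~\ref{lem:integral_edge_crossing}, inequality~\eqref{eq:integral_edge_crossing_ball}; the $\cE_R$-part integrates to $\pi_r\,\cE_R$ because Proposition~\ref{prop:annulus_r_2r} and Lemma~\ref{lem:renormalization_constants} together give $\theta_{2r,4r}\asymp \pi_r$, which controls $\int\pi_{2r,(x,v)}\,\de v\,\de x$ (the extra boundary regime $|x|\asymp R$ is absorbed into $\pi^{(2)}_{r,R}$).

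\textbf{Second inequality.} For $\theta_{(u),R}$ I condition on whether $(0,u)$ has any neighbour outside $B(0,r)$.
If yes, I proceed as in the first inequality but with the vertex $(0,u)$ replacing the ball $B(0,r)$: Mecke applied to a neighbour $(x,v)$ with $|x|>r$, together with Proposition~\ref{lem:partition_upper_bound}, the hypothesis, and Lemma~\ref{lem:integral_edge_crossing} inequality~\eqref{eq:integral_edge_crossing_point}, yields the contribution $\pi^{(2)}_{(u),R}+\pi_{(u),r}\,\cE_R$.
If no, then every neighbour of $(0,u)$ lies in $B(0,r)$, so the event $\{(0,u)\longleftrightarrow B(0,R)^{\complement}\}$ implies $\{B(0,r)\longleftrightarrow B(0,R)^{\complement}\}$; invoking the first inequality just proved contributes $\pi^{(2)}_{r,R}+\pi_r\,\cE_R$. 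Summing the two cases gives the stated bound.

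\textbf{Main obstacle.} The delicate step is justifying the BK-type factorisation inside Mecke's formula: the event that $(x,v)$ is joined by a single edge to $B(0,2r)$ and the event that $(x,v)\longleftrightarrow B(0,R)^{\complement}$ are not independent, as the continuation path may reuse the bridging edge or re-enter $B(0,2r)$. This is precisely where the geometric partition of Proposition~\ref{lem:partition_upper_bound} intervenes, partitioning the Poisson process onto disjoint regions so that the factorisation holds up to constant factors. A secondary technical point is the regime $|x|\asymp R$ in the integrals, where $R-|x|$ is no longer comparable to $R$ and $\cE_{R-|x|}\asymp\cE_R$ fails; these configurations correspond to an essentially direct one- or two-edge crossing and are absorbed into the $\pi^{(2)}_{r,R}$ term using Proposition~\ref{prop:annulus_r_2r} and the scaling from Lemma~\ref{lem:renormalization_constants}.
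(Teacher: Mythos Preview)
Your reduction of~\eqref{eq:bootstrap_theta_u_R} to~\eqref{eq:bootstrap_theta_r_R} via the dichotomy on the first step of the path from $(0,u)$ is valid and arguably cleaner than the paper's direct use of Lemma~\ref{lem:partition_upper_bound} in the rooted case (modulo the domain restrictions $\pi_{(u),r}\le\tfrac12$ and $u^{-\gamma}h(R/2)\le 1$ needed for Lemma~\ref{lem:integral_edge_crossing}\eqref{eq:integral_edge_crossing_point} and for $\E[M_r]\asymp\pi_{(u),r}$, which the paper handles by trivial bounds on the complementary domains). The genuine gap is in your argument for~\eqref{eq:bootstrap_theta_r_R}. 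Your assertion that $\theta_{2r,4r}\asymp\pi_r$ controls $\int_{\{|x|>2r\}}\int_0^1\pi_{2r,(x,v)}\,\de v\,\de x$ is false: the boundary layer $2r<|x|<2r+O(1)$ alone contributes order $r^{d-1}$ to this integral, since a vertex just outside $B(0,2r)$ has a neighbour inside with probability bounded away from zero, whereas $\pi_r$ decays polynomially in $r$. Hence your error term is at best $r^{d-1}\cE_R$, not $\pi_r\cE_R$. The same missing buffer spoils the $\pi^{(2)}$-part: Lemma~\ref{lem:integral_edge_crossing}\eqref{eq:integral_edge_crossing_ball} is stated for $\pi_{r,(x,v)}$ integrated over $|x|>2r$, not for $\pi_{2r,(x,v)}$ over $|x|>2r$, and without the buffer its conclusion fails.

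This is precisely what the five-term decomposition of Lemma~\ref{lem:partition_upper_bound}---which you cite but then collapse to a single integral---is designed for. When the first exit vertex $x_P$ lands in the dangerous shell $[2r,4r)$, one looks at the \emph{next} vertex $y_P$ on the path. If $y_P\in[0,8r)$, BK gives the product $\theta_{r,2r}\,\theta_{8r,R}$; the factor $\theta_{r,2r}\asymp\pi_r$ from Proposition~\ref{prop:annulus_r_2r} is the true source of the $\pi_r$ in~\eqref{eq:bootstrap_theta_r_R}, and $\theta_{8r,R}$ is then bounded via the hypothesis~\eqref{eq:assmpt_theta_r_R}, which your argument never invokes. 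If $y_P\in[8r,R/2)$, Mecke is applied to $y_P$ rather than $x_P$, producing the integral $J_2$ with integrand $\pi_{4r,(y,v)}$ over $|y|>8r$, where the required buffer is present and Lemma~\ref{lem:integral_edge_crossing} applies. Your preliminary split by path length (at most two versus at least three edges) is never actually used in your own bound and cannot substitute for this second-level geometric refinement.
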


\begin{remark}
  In the previous section we showed that~\eqref{eq:assmpt_theta_r_R},~\eqref{eq:assmpt_theta_u_R} hold with the error term~$\cE_R=\pi_R$. In Section~\ref{subsec:bootstrap_argument} we will use Proposition~\ref{prop:bootstrapping} to obtain these inequalities with a reduced error term at the expense of a larger constant factor hidden in the $\lesssim$ notation. Note that to use the proposition the error term needs to be defined independently of $u,r$.
\end{remark}      
\begin{remark}
    While the right-hand side in~\eqref{eq:bootstrap_theta_u_R} depends on $r$, the
    left-hand side does not. We can therefore 
    choose $r$ suitably in the application of Proposition~\ref{prop:bootstrapping}.
  This is the purpose of the function $s(u,R)$ defined at the beginning of Section~\ref{subsec:bootstrap_argument}.
\end{remark}
\begin{remark}\label{remark: trivial upper bound}
   Note that on domains where \smash{$\pi^{^{(2)}}_{^{(u),R}}\asymp 1$}, Equation~\eqref{eq:bootstrap_theta_u_R} holds trivially. This is particularly the case for the domain $\{u^{-\gamma}h(R/2)\geq 1\}$. In this case the geometric argument is not needed at all.
\end{remark}

The idea of the proof of  Proposition~\ref{prop:bootstrapping} lies in strengthening the upper bounds ~\eqref{eq:assmpt_theta_r_R} and~\eqref{eq:assmpt_theta_u_R}, by splitting the event of having a path from $(0,u)$ to $B(0,R)^{\complement}$ into five carefully chosen events and bounding the probability of each individual event. The next lemma states an upper bound on $\theta_{(u), R}$ and $\theta_{r,R}$ obtained by such a partitioning and is a key tool in proving Proposition~\ref{prop:bootstrapping}. 
\begin{lemma}[Geometric argument]\label{lem:partition_upper_bound}
    For $u\in (0,1]$ and $2\leq r \leq R/32$,
    \begin{align}
         \theta_{r,R} & \leq \theta_{r, 2r}\theta_{8r, R} + \pi_{4r, R/2} + \pi_{r,R/2} + J_1 + J_2,\label{eq:exact_upbd_partition_theta_r}\\
          \theta_{(u), R} & \leq \theta_{(u), 2r}\theta_{8r, R} + \pi_{4r, R/2} + \pi_{2r, R/2} + \pi_{(u), R/2} + J_1 + J_2 + J_3, \label{eq:exact_upbd_partition}
    \end{align}
    and as an immediate consequence,
     \begin{align}
        \theta_{r, R} &\lesssim  \theta_{r}\theta_{8r, R} + \pi_{r, R}  + {J_1 + J_2}, & \label{eq:upbd_partition_theta_r}\\
        \theta_{(u), R} &\lesssim \theta_{(u), {2}r}\theta_{
        8r, R} + \pi_{r, R} + \pi_{(u),R} + J_1 + J_2+J_3, \label{eq:upbd_partition}
    \end{align}
    on $\{r\leq R/32\}$ and where
    \begin{align*}
        J_1 & := \int_{B(0, R/2)\setminus B(0,4r)}\int_{0}^1  \pi_{2r, (y,v)}\theta_{(v), R/2} \, dv \, dy, \\
        {J_2} & :=\int_{B(0,R/2)\setminus B(0,8r)}\int_0^1 \pi_{4r, (y,v)}\theta_{(v), R/2} \, dv \, dy,  \text{ and }\\
        J_3 & := \int_{B(0, R/2)\setminus B(0,4r)}\int_{0}^1  \mathbb P \big( (0,u) \sim (y,v) \big) \theta_{(v), R/2} \,  dv \, dy.
    \end{align*}
\end{lemma}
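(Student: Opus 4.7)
My plan is to partition the event $\{B(0,r)\longleftrightarrow B(0,R)^\complement\}$ (respectively $\{(0,u)\longleftrightarrow B(0,R)^\complement\}$) according to the geometric profile of a witnessing self-avoiding path $\x_0,\x_1,\ldots,\x_n$ relative to the nested balls of radii $2r,4r,8r,R/2$. I introduce the first-exit indices
\[
i_k := \min\{i\ge 0 : |x_i|\ge k\},\qquad k\in\{2r,4r,8r,R/2\},
\]
which satisfy $i_{2r}\le i_{4r}\le i_{8r}\le i_{R/2}\le n$ (all finite thanks to the assumption $2r\le R/16$). The five (respectively seven) right-hand side terms will correspond to a branching on whether $i_{2r}=i_{R/2}$ (i.e.\ a single jump out of $B(0,2r)$ already exits $B(0,R/2)$) and otherwise on the precise thin or thick shell in which the ``landing'' vertices $\x_{i_{2r}}$ and $\x_{i_{4r}}$ sit.

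The routine branches are the short-jump and intermediate-landing ones. If $i_{2r}=i_{R/2}$, a single edge crosses from $B(0,2r)$ to $B(0,R/2)^\complement$, yielding $\pi_{r,R/2}$ in the annulus case when $i_{2r}=1$ (so $\x_0\in B(0,r)$ is incident to that edge) and $\pi_{2r,R/2}$ otherwise, and analogously $\pi_{(u),R/2}$ or $\pi_{2r,R/2}$ in the escape statement; the subsequent branch $i_{2r}<i_{4r}=i_{R/2}$ produces $\pi_{4r,R/2}$. For the intermediate-landing branch $\x_{i_{2r}}=(y,v)\in B(0,R/2)\setminus B(0,4r)$, the event factors as the one-edge connection $\x_{i_{2r}-1}\sim(y,v)$ with $\x_{i_{2r}-1}\in B(0,2r)$ together with a connection from $(y,v)$ to $B(0,R)^\complement$. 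The inclusion $B(y,R/2)^\complement\supseteq B(0,R)^\complement$, which is valid because $|y|<R/2$, bounds the latter by $\theta_{(v),R/2}$, and Mecke's formula applied to the intermediate vertex produces exactly $J_1$. The analogous branch $\x_{i_{4r}}\in B(0,R/2)\setminus B(0,8r)$ yields $J_2$, and in the escape statement the subcase $i_{2r}=1$, $|x_1|\in[4r,R/2)$, produces $J_3$ via the same scheme but with $\pi_{2r,(y,v)}$ replaced by the direct edge probability $\P((0,u)\sim(y,v))$.

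The last and most delicate branch is the ``doubly short'' one, where $\x_{i_{2r}}\in B(0,4r)\setminus B(0,2r)$ and $\x_{i_{4r}}\in B(0,8r)\setminus B(0,4r)$, so that the path crawls through the thin shells before escaping. Setting $j^*:=\max\{i:|x_i|<8r\}$, a quick check gives $j^*\ge i_{8r}-1\ge i_{4r}>i_{2r}$, so that the sub-paths $(\x_0,\ldots,\x_{i_{2r}})$ and $(\x_{j^*},\ldots,\x_n)$ are edge-disjoint and respectively witness the crossings $B(0,r)\leftrightarrow B(0,2r)^\complement$ and $B(0,8r)\leftrightarrow B(0,R)^\complement$ (and analogously $(0,u)\leftrightarrow B(0,2r)^\complement$ in the escape case). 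The main obstacle is converting this edge-disjoint witnessing into the product bound $\theta_{r,2r}\theta_{8r,R}$: since in the GIRG the edges are conditionally independent given the Poisson process $\poisson$, the natural route is a conditional van den Berg--Kesten inequality giving $\P\big(\,\cdot\,\circ\,\cdot\mid\poisson\big)\le\P(A\mid\poisson)\P(B\mid\poisson)$, but the passage from $\mathbb E[\P(A\mid\poisson)\P(B\mid\poisson)]$ to the unconditional product $\P(A)\P(B)=\theta_{r,2r}\theta_{8r,R}$ is subtle, since a naive FKG estimate goes in the wrong direction; the intended way, I expect, is to exploit the spatial separation between $B(0,2r)$ (where the inner witness lives) and $B(0,8r)^\complement$ (where most of the outer witness lives) via Poisson independence. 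Once the cases are summed we obtain the exact estimates \eqref{eq:exact_upbd_partition_theta_r} and \eqref{eq:exact_upbd_partition}; the weaker consequences \eqref{eq:upbd_partition_theta_r} and \eqref{eq:upbd_partition} then follow immediately from $\pi_{r,R/2},\pi_{2r,R/2},\pi_{4r,R/2}\lesssim\pi_{r,R}$ and $\theta_{r,2r}\le\theta_r$ via Lemma~\ref{lem:renormalization_constants}, and by absorbing $\pi_{(u),R/2}$ into $\pi_{(u),R}$.
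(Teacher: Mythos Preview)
Your case decomposition is essentially the paper's---the paper tracks the vertex $y_P$ immediately \emph{following} the first-exit vertex $x_P=\x_{i_{2r}}$ rather than the later first-exit $\x_{i_{4r}}$, but both bookkeepings produce the same five (resp.\ seven) terms. The substantive gap is exactly the one you flag in the ``doubly short'' branch. Your diagnosis that conditional BK followed by unconditioning fails (FKG goes the wrong way) is correct, and your proposed workaround via spatial Poisson independence does not work either: the inner witness ends in $B(0,4r)\setminus B(0,2r)$ while the outer witness has its first vertex $\x_{j^*}$ still inside $B(0,8r)$, so there is no clean spatial cut separating the two. The paper resolves this in one stroke by invoking the van den Berg--Kesten inequality for marked Poisson point processes~\cite{vandenBerg_1996}, which delivers $\P(A\circ B)\le \P(A)\P(B)$ unconditionally for increasing events; no conditioning on~$\poisson$ enters. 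Note also that your sub-paths are in fact \emph{vertex}-disjoint (the original path is self-avoiding and $j^*>i_{2r}$), which is precisely the notion of disjoint occurrence this version of BK uses.

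The same Poisson-BK ingredient is needed in the $J_1,J_2,J_3$ branches, and you have glossed over it there: after applying Mecke at the intermediate vertex $(y,v)$, one still has to bound the Palm probability of the \emph{disjoint} occurrence of $\{(y,v)\,\simone\, B(0,2r)\}$ and $\{(y,v)\longleftrightarrow B(0,R)^\complement\}$ by the product $\pi_{2r,(y,v)}\theta_{(y,v),R}$. Your phrase ``the event factors'' hides exactly this step; the paper again cites~\cite{vandenBerg_1996} at that point.
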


The integrals appearing in the previous lemma are reminiscent of some of the integrals computed in Section~\ref{sec:one_two_crossing_bounds}, the main difference being that these integrals feature the escape probability $\theta_{(v),R/2}$ instead of $\pi_{(v),R/2}$. In the next lemma, we provide bounds on these integrals, assuming~\eqref{eq:assmpt_theta_u_R} holds for the escape probability. 



\begin{lemma}\label{lem:bds_integrals_J_I} Assume~\eqref{eq:assmpt_theta_u_R} holds. On the domain $\{r\leq R/32\}$,
    \begin{align*}
        J_1 &\lesssim \pi_{r, R}^{(2)} + \pi_{r}{\cE_{R}},\\
        J_2 & \lesssim \pi_{r, R}^{(2)} + \pi_{r}{\cE_{R}}.
    \end{align*}
    On the domain $\{u^{-\gamma}h(R/2) \leq 1,\pi_{(u),r}\leq 1/2\}$,
    \begin{align*}
        J_3 & \lesssim \pi_{(u), R}^{(2)} + 
        \pi_{(u),r} \cE_{R}.
    \end{align*}
\end{lemma}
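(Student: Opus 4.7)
The plan is to apply hypothesis \eqref{eq:assmpt_theta_u_R} pointwise in $v$ to each of $J_1$, $J_2$, $J_3$, and, using the monotonicity $\cE_{R/2}\lesssim \cE_R$, to write each $J_i$ as the sum of a ``two-edge piece'' and an ``error piece'' given by $\cE_R$ times a mass integral.

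For the two-edge pieces of $J_1$ and $J_2$, the resulting integrals are exactly of the form treated by equation~\eqref{eq:integral_edge_crossing_ball} of Lemma~\ref{lem:integral_edge_crossing} upon substituting $(r,R)\mapsto(2r,R/2)$ and $(r,R)\mapsto(4r,R/2)$ respectively; the spatial domains $B(0,R/2)\setminus B(0,4r)$ and $B(0,R/2)\setminus B(0,8r)$ match perfectly, and the bounds $\pi^{(2)}_{2r,R/2}$, $\pi^{(2)}_{4r,R/2}$ are both $\asymp \pi^{(2)}_{r,R}$ by Lemma~\ref{lem:renormalization_constants}~\eqref{scaling_pi2}. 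For the two-edge piece of $J_3$, I would first enlarge the spatial domain from $B(0,R/2)\setminus B(0,4r)$ to all of $\R^d$ at no cost, then apply~\eqref{eq:integral_edge_crossing_point} of Lemma~\ref{lem:integral_edge_crossing} with $R\mapsto R/2$; its hypothesis $u^{-\gamma}h(R/2)\le 1$ is exactly the assumption of the lemma (noting $h(R/2)\asymp h(R)$), and the bound $\pi^{(2)}_{(u),R/2}$ obtained is $\asymp \pi^{(2)}_{(u),R}$ by~\eqref{scaling_piu2}.

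The error pieces reduce to showing
\[
\int_{B(0,R/2)\setminus B(0,4r)}\!\int_0^1 \pi_{2r,(y,v)}\,dv\,dy\ \lesssim\ \pi_r
\quad\text{and}\quad
\int_{B(0,R/2)\setminus B(0,4r)}\!\int_0^1 \mathbb P\!\left((0,u)\sim (y,v)\right)dv\,dy\ \lesssim\ \pi_{(u),r},
\]
together with the analogue for $J_2$ (same argument with $2r$ replaced by $4r$ and $4r$ by $8r$). For the first, I use $\pi_{2r,(y,v)}\asymp \mu(y,v)\wedge 1$ with $\mu(y,v)=\lambda\int_{B(0,2r)}\int_0^1 \rho(|y-z|^d(wv)^\gamma)\,dw\,dz\asymp r^d\int_0^1 \rho(|y|^d(wv)^\gamma)\,dw$, exploiting that $|y-z|\asymp|y|$ for $z\in B(0,2r)$ and $|y|\ge 4r$; the resulting double integral in $(y,v)$ is computed exactly as in Section~\ref{subsec:bd_r_R}, with the radial integration in $|y|$ converging at its upper limit (since $\gamma<1/2$ forces the relevant exponent in $|y|$ to be strictly less than $-d$) and being dominated by its value near $|y|=4r$, yielding the asserted bound by $\pi_r$ (with at most a logarithmic slack in the heavy-tail regime). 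The second bound follows from a one-line Mecke computation: the integral is at most $\lambda^{-1}\mathbb E_{(0,u)}[M_{4r}]=\lambda^{-1}\bigl(-\log(1-\pi_{(u),4r})\bigr)$, and the hypothesis $\pi_{(u),r}\le 1/2$ together with the monotonicity $\pi_{(u),4r}\le \pi_{(u),r}$ and \eqref{scaling_piu1} converts this into $\asymp \pi_{(u),4r}\asymp \pi_{(u),r}$.

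The principal technical point to watch is that the mass integrals could be much larger than $\pi_r$ or $\pi_{(u),r}$ if the cap $\mu\wedge 1$ were replaced by $\mu$; the argument crucially uses $\pi_{(w),r}\asymp \mathbb E[M_r|(0,w)]\wedge 1$, not the Markov bound, since the expected number of neighbours of an abnormally strong vertex can be arbitrarily large. With this caveat respected, each integral faithfully reproduces the asymptotics already worked out in Section~\ref{sec:one_two_crossing_bounds}, completing the proof.
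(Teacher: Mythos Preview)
Your proof is correct and follows the same structure as the paper's: apply~\eqref{eq:assmpt_theta_u_R} pointwise, split each $J_i$ into a two-edge piece handled by Lemma~\ref{lem:integral_edge_crossing} (with the substitutions $(r,R)\mapsto(2r,R/2)$, $(4r,R/2)$, and $R\mapsto R/2$ exactly as you describe) and an error piece given by $\cE_R$ times a mass integral.

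The one place where you supply more detail than the paper is the error integral for $J_1$: the paper simply asserts that $\int\!\!\int \pi_{2r,(y,v)}\,dv\,dy\lesssim \pi_{2r,4r}$, whereas you sketch the direct computation via $\pi_{2r,(y,v)}\asymp \mu(y,v)\wedge 1$ and the radial integral. Your caution about a possible ``logarithmic slack in the heavy-tail regime'' is unfounded---carrying out the integral with the cap shows it matches $\pi_r$ exactly (the $\log r$ factor appearing in $\pi_r$ for $\gamma\delta\ge 1$ is precisely reproduced by the $\wedge 1$ in the $v$-integration)---but this does not affect the argument. Your observation that the cap is essential here (while for $J_3$ it is instead the hypothesis $\pi_{(u),r}\le 1/2$ that makes the Poisson mean comparable to the probability) is correct and worth recording.
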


With these two lemmas at hand we can now prove Proposition~\ref{prop:bootstrapping}. 

\begin{proof}[Proof of Proposition~\ref{prop:bootstrapping}]
    We begin by bounding $\theta_{r,R}$. Note that on 
    {$D_0$} we already have a good bound on $\theta_{r,R}$ by~\eqref{eq:naive_Dzero}, which directly implies \eqref{eq:bootstrap_theta_r_R}. Thus we now work on $\{r\le R/32\}$. 
    Using the upper bound \eqref{eq:upbd_partition_theta_r} given in Lemma~\ref{lem:partition_upper_bound}, 
    together with the bounds on $J_1$ and $J_2$ from Lemma~\ref{lem:bds_integrals_J_I}, we get 
      \begin{align}
        \theta_{r,R} &  \lesssim \theta_r \theta_{8r,R} + \pi_{r,R} + \pi_{r, R}^{(2)} + \pi_r \cE_{R}.
    \end{align}
   Using \eqref{eq:assmpt_theta_r_R} to bound $\theta_{8r,R}$ we obtain that
    \begin{align*}
           \theta_{r,R} & \lesssim \theta_r \pi_{8r,R}^{(2)} + \theta_r \cE_{R} + \pi_{r,R} + \pi_{r, R}^{(2)} + \pi_r {\cE_{R}}.
    \end{align*}
    Recalling by Proposition~\ref{prop:annulus_r_2r} that we have $\theta_r\asymp \pi_r$, we conclude that
    \begin{align*}
         \theta_{r,R} & \lesssim \pi_r \pi_{8r,R}^{(2)}  + \pi_{r,R} + \pi_{r, R}^{(2)} + \pi_r \cE_{R} \lesssim  \pi_{r, R}^{(2)}  + \pi_r  \cE_{R},
    \end{align*} 
    using the fact that $\pi_{8r,R}^{(2)} \asymp \pi_{r,R}^{(2)}$ and $\pi_{r,R}\leq \pi_{r,R}^{(2)}.$ This proves \eqref{eq:bootstrap_theta_r_R}.    \pagebreak[3]\smallskip

    \medskip
 
Next, we bound \smash{$\theta_{(u),R}$}. We begin by proving \eqref{eq:bootstrap_theta_u_R} on various subdomains of $\mathcal{D}$ where the proof is either trivial or follows directly from previous results. 
First, consider the domain~$D_0$. Here, Lemma~\ref{lem:renormalization_constants} gives $\pi_{R}\asymp \pi_{^{r,R}}^{_{(2)}}$, and thus \eqref{eq: initial_upper} implies \smash{$\theta_{(u),R} \lesssim \pi_{^{(u), R}}^{_{(2)}}+\pi_{^{r,R}}^{_{(2)}}$}, from which \eqref{eq:bootstrap_theta_u_R} follows immediately.
Second, on the domain 
$\{\pi_{(u),r}\geq 1/2\}$ we have \smash{$\pi_{(u),r}+\pi_{r}\asymp 1$}, so \eqref{eq:bootstrap_theta_u_R} is directly implied by hypothesis \eqref{eq:assmpt_theta_u_R}.
Third, on  \smash{$\{u^{-\gamma}h(\frac{R}2)\geq 1\}$}, we have \smash{$\pi_{^{(u),R}}^{_{(2)}}\asymp 1$}, and hence \eqref{eq:bootstrap_theta_u_R} holds immediately.
\medskip

We now prove \eqref{eq:bootstrap_theta_u_R} on $\{r\leq R/32\}\cap\{u^{-\gamma}h(R/2) \leq 1\}\cap\{\pi_{(u),r}\leq 1/2\}$, which is the interesting case where the geometric argument is needed. We observe that on this domain we can use both \eqref{eq:upbd_partition} in Lemma~\ref{lem:partition_upper_bound}, and the bounds on $J_1$, $J_2$, and $J_3$ from Lemma~\ref{lem:bds_integrals_J_I}, thus obtaining
    \begin{align}
     \label{eq:intermediate_bound_theta_u_R}
        \theta_{(u),R}&  \lesssim \theta_{(u),{2r}}\theta_{8r,R} + \pi_{r,R}^{(2)} + \pi_{(u), R}^{(2)} + (\pi_{(u), r}+\pi_r)\cE_{R},
    \end{align}
    where we used the fact that \smash{$\pi_{r,R}\leq \pi_{{r,R}}^{_{(2)}}$ and $\pi_{(u),R} \leq \pi_{^{(u),R}}^{_{(2)}}$}. Observe now that \eqref{eq:naive_upperbound_u_R} together with Lemma~\ref{lem:renormalization_constants} and Proposition \ref{prop:annulus_r_2r} 
    gives $\theta_{(u),2r}\lesssim  \pi_{(u),r}+\pi_r$, and \eqref{eq:assmpt_theta_r_R} together with Lemma~\ref{lem:renormalization_constants} gives 
    \smash{$\theta_{8r,R}\lesssim \pi_{^{r,R}}^{_{(2)}} + \cE_R$}. Combining both bounds we arrive at
    \begin{align*}
           \theta_{(u),2r}\theta_{8r,R} & \lesssim \pi_{r,R}^{(2)} + (\pi_r + \pi_{(u),r})\cE_{R}.
    \end{align*}
    Using this bound in~\eqref{eq:intermediate_bound_theta_u_R} concludes the proof of~\eqref{eq:bootstrap_theta_u_R}.\qedhere
\end{proof}

We now prove Lemma~\ref{lem:partition_upper_bound}, which constitutes the geometric core of the proof.


\begin{proof}[Proof of Lemma~\ref{lem:partition_upper_bound}]
{We first observe that \eqref{eq:upbd_partition_theta_r} and \eqref{eq:upbd_partition} follow from \eqref{eq:exact_upbd_partition_theta_r} and \eqref{eq:exact_upbd_partition} by applying Lemma~\ref{lem:renormalization_constants}. We therefore focus on proving these latter inequalities, beginning with \eqref{eq:exact_upbd_partition}.}

\begin{figure}[h]
    \centering    \includegraphics[width=0.85\linewidth]{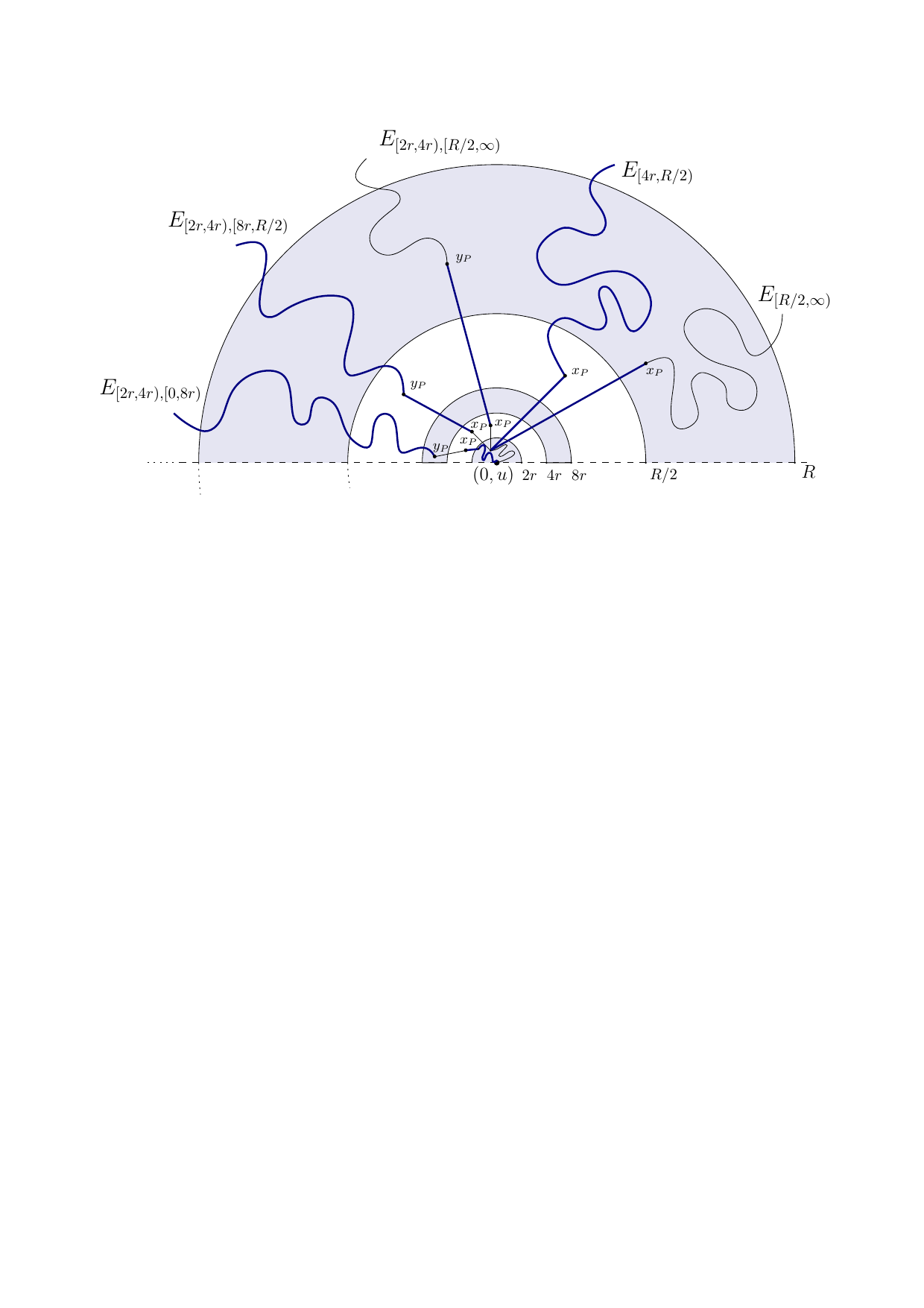}
    \caption{The paths from $(0,u)$ to $B(0,R)^{\complement}$ are split in five disjoint classes. In this figure edges are seen as straight lines and paths as curved lines. The parts contributing to our bounds are drawn in navy and bold. }
    \label{fig:crossings_partition}
\end{figure}

\pagebreak[3]
The idea of the proof consists of splitting the event of having a path from $(0,u)$ to $B(0,R)^{\complement}$ into five events and bounding their respective probabilities. To this end, let $\cS$  be the set of  paths with starting vertex $(0,u)$ ending in $B(0,R)^{\complement}$. 
For $P\in \cS$, write $x_P$ for the first vertex on this path that is outside $B(0,2r)$ and, if $x_P \in B(0,R)$, write $y_P$ for the next vertex on the path. For intervals $I_1,I_2 \subseteq [0,\infty)$, define the event $E_{I_1, I_2}$ as
\begin{align}\label{def:event_E_I_I}
    E_{I_1, I_2} & := \{\exists \,P\in \cS  \text{ such that } |x_P|\in I_1, \, |y_P| \in I_2 \},
\end{align}
and define $E_{I_1}$ analogously, only requiring that $|x_P|\in I_1$.
We can then rewrite the event $\{|\cS| \geq 1
\}$ as a union of  events 
\begin{align}\label{eq:partition_union_events_path}
    \{|\cS| \geq 1
\} & = E_{[2r,4r),[0,8r)} \cup  E_{[2r,4r),[8r, R/2)} \cup  E_{[2r,4r),[R/2,\infty)} \cup  E_{[8r, R/2)} \cup  E_{[R/2, \infty)}.
\end{align}
See Figure~\ref{fig:crossings_partition} for illustration. Note that $\theta_{(u),R} = \P(|\cS|\geq 1)$ and so, we obtain an upper bound on $\theta_{(u),R}$ by bounding the probability of each individual event in \eqref{eq:partition_union_events_path}.
\medskip
\pagebreak[3]

\noindent
$\bullet$ Begin by observing that $E_{[2r,4r), [0,8r)}$ implies the existence of a path from $(0,u)$ to $B(0,4r) \setminus B(0,2r)$ and of a path from $B(0, 8r)$ to $B(0,R)^{\complement}$, where the two paths do not share a common vertex since paths are self-avoiding. Therefore $E_{[2r,4r), [0,8r)}$ is contained in the disjoint occurrence of the events $\{(0,u) \longleftrightarrow B(0,4r) \setminus B(0,2r)\}$ and $\{B(0,8r) \longleftrightarrow B(0,R)^{\complement}\}$. Since both events are increasing, an application of the van den Berg-Kesten (BK) inequality for marked Poisson point processes \cite{vandenBerg_1996} gives
\begin{align}\label{eq:prob_E_le_2r_le_3r}
    \P(E_{[2r,4r), [0,8r)})\leq \theta_{(u),2r}\theta_{8r, R}.
\end{align}

\noindent
$\bullet$ The event $E_{[2r,4r), [R/2, \infty)} $ implies that there is an edge from $B(0,4r)$ to $B(0,R/2)^{\complement}$, thus 
\begin{align}\label{eq:prob_le_2r_ge_R2}
    \P(E_{[2r,4r), [R/2, \infty)}) \leq \pi_{4r, R/2}.
\end{align}

\noindent
$\bullet$ On the event $E_{[R/2, \infty)}$, $x_P$ lies outside $B(0,R/2)$ while the predecessor of $x_P$ in $P$ lies within $B(0,2r)$. Therefore, $E_{[R/2, \infty)}$ implies that there exists an edge from $B(0,2r)$ to $B(0,R/2)^{\complement}$ or an edge from $(0,u)$ to $B(0,R/2)^{\complement}$. It follows that 
\begin{align}\label{eq:prob_ge_R2}
    \P(E_{[R/2, \infty)}) \leq \pi_{2r, R/2} + \pi_{(u),R/2}.
\end{align}

\noindent
$\bullet$ Observe that on the event $E_{[2r,4r), [8r, R/2)}$, there exists a vertex $$(y_P,v) \in B(0, R/2)\setminus B(0,8r) \times (0,1],$$ such that $B(0,4r) ^{_{\simone}}  (y_P,v)$ and $ (y_P,v) \longleftrightarrow B(0,R)^{\complement}$, where the two paths do not share a common vertex. By a first moment method, the probability of $E_{[2r,4r), [8r, R/2)}$ is then upper bounded by the expected number of such vertices $(y_P,v) \in \cP$. Before computing this expectation, observe that given $(y,v) \in \cP \cap (B(0, R/2)\setminus B(0,8r) \times [0,1])$, the probability that two vertex disjoint paths $B(0,4r)^{_{\simone}}  (y,v)$ and $ (y,v) \longleftrightarrow B(0,R)^{\complement}$ exists can be bounded from above by 
\begin{align*}
    \pi_{4r, (y,v)}\theta_{(y,v), R},
\end{align*}
by the BK inequality for marked Poisson point processes \cite{vandenBerg_1996}. Note that we have $\theta_{(y,v), R} \leq \theta_{(v), R/2}$ as $|y|< R/2$. Combining the above together with a first moment method, we deduce that 
\begin{align}\label{eq:integral_J1}
    \P(E_{[2r,4r), [8r, R/2)}) & \leq \int_{(B(0, R/2)\setminus B(0,8r)}\int_{0}^1  \pi_{4r, (y,v)}\theta_{(v), R/2} dvdy = J_2.
\end{align}

\noindent
$\bullet$ Finally, on the event $E_{[4r, R/2)}$ there exists a vertex $$(x_P,v) \in B(0, R/2)\setminus B(0,4r) \times (0,1],$$ such that $ (x_P,v) \longleftrightarrow B(0,R)^{\complement}$, and either $B(0,2r) ^{_{\simone}}  (x_P,v)$, or $(0,u)\sim (x_P,v)$.
Following a similar argument as above and a first moment bound, the probability of $E_{[4r, R/2)}$ can be upper bounded  by 
\begin{align}\label{eq:prob_2r_R2_eq1}
    \P(E_{[4r, R/2)}) & \leq \int_{(B(0, R/2)\setminus B(0,4r)}\int_{0}^1  ( \mathbb P \big( (0,u) \sim (x,v) \big) + \pi_{2r, (x,v)})\theta_{(v), R/2} \, dv \, dx\\
    & = J_1 + J_3.\notag
\end{align}
Combining the above bounds \eqref{eq:prob_E_le_2r_le_3r}, \eqref{eq:prob_le_2r_ge_R2}, \eqref{eq:prob_ge_R2},  \eqref{eq:integral_J1} and \eqref{eq:prob_2r_R2_eq1} together with \eqref{eq:partition_union_events_path}, we obtain 
\begin{align*}
    \theta_{(u), R} \leq \theta_{(u), 2r}\theta_{8r, R} + \pi_{4r, R/2} + \pi_{2r, R/2} + \pi_{(u), R/2} + J_1 + J_2 + J_3.
\end{align*}
This concludes the proof of \eqref{eq:exact_upbd_partition}.\\

The argument for the upper bound on $\theta_{r, R}$ is very similar to the proof above. Let $\cS'$  be the set of paths with starting vertex in $B(0,r)$ and ending in $B(0,R)^\complement$. For $P\in \cS'$, write~$x_P'$ for the first vertex of this path that is outside $B(0,2r)$ and write $y_P'$ for the next vertex on the path. Define the events $E_{I_1,I_2}'$ and $E_{I_1}'$ analogously to the previous case, see \eqref{def:event_E_I_I}, except that the paths are now in $\cS'$. We then have that 
$\theta_{r, R} = \P(|\cS'| \geq 1)$ and 
\begin{align*}
    \{|\cS'| \geq 1\} & =  E_{[2r,4r),[0,8r)}' \cup  E_{[2r,4r),[8r, R/2)}' \cup  E_{[2r,4r),[R/2,\infty)}' \cup  E_{[4r, R/2)}' \cup  E_{[R/2, \infty)}'.
\end{align*}
Using similar arguments to obtain the bounds \eqref{eq:prob_E_le_2r_le_3r}, \eqref{eq:prob_le_2r_ge_R2} and \eqref{eq:prob_ge_R2}, for which we omit the details, we  deduce that 
\begin{align*}
    \P( E_{[2r,4r),[0,8r)}' \cup  E'_{[2r,4r),[R/2,\infty)} \cup  E'_{[R/2, \infty)}) & \leq \theta_{r, 2r}\theta_{8r, R} + \pi_{4r, R/2} + \pi_{r, R/2}.
\end{align*}
Further, adapting the arguments for \eqref{eq:integral_J1} and \eqref{eq:prob_2r_R2_eq1} we obtain 
\begin{align*}
    \P(E_{[2r,4r),[8r, R/2)}') & \leq \int_{B(0,R/2)\setminus B(0,8r)}\int_0^1 \pi_{4r, (y,v)}\theta_{(v), R/2}dvdy = J_2,\\
    \P( E_{[4r, R/2)}') & \leq \int_{B(0,R/2)\setminus B(0,4r)}\int_0^1 \pi_{2r, (x,v)}\theta_{(v), R/2}dvdx = {J_1}.
\end{align*}
Combining all of the above we obtain 
\begin{align*}
    \theta_{r,R} \leq \theta_{r, 2r}\theta_{8r, R} + \pi_{4r, R/2} + \pi_{r, R/2} + J_1 + J_2,
\end{align*}
concluding the proof of \eqref{eq:exact_upbd_partition_theta_r}.\qedhere

\end{proof}

\begin{proof}[Proof of Lemma~\ref{lem:bds_integrals_J_I}]
    Applying the inequality \eqref{eq:assmpt_theta_u_R}, which we assume to hold, to the integrand of $J_1$, we obtain that
    \begin{align}
        J_1 & \lesssim\int_{B(0, R/2)\setminus B(0,4r)}\int_{0}^1  \pi_{2r, (y,v)}\pi^{(2)}_{(v), R/2} dvdy \notag \\
        & \qquad + \int_{B(0, R/2)\setminus B(0,4r)}\int_{0}^1  \pi_{2r, (y,v)}\cE_{R/2} dvdy, \label{eq:upbd_J1}
    \end{align}  
    on the domain $\{r \leq R/32\}$.
    The second summand on the right hand side can be bounded from above by a constant multiple of $\pi_{2r, 4r}\cE_{R/2}$. For the first summand, we apply Lemma~\ref{lem:integral_edge_crossing} \eqref{eq:integral_edge_crossing_ball} to obtain the upper bound 
    \begin{align*}
        \int_{B(0, R/2)\setminus B(0,4r)}\int_{0}^1  \pi_{2r, (y,v)}\pi^{(2)}_{(v), R/2} dvdy & { \, \lesssim \,} \pi_{2r,R/2}^{(2)}.
    \end{align*}
    Plugging this into \eqref{eq:upbd_J1}, using the fact that $\pi_{2r,4r} \asymp \pi_r$ and $\pi_{^{2r, R/2}}^{_{(2)}}\asymp \pi_{r,R}$, and the assumption $\cE_{R/2} \lesssim \cE_{R}$, we get that
         $J_1  \lesssim  \pi_{^{r, R}}^{_{(2)}} + \pi_{r}\cE_{R}$.
    Using the same line of argument, we can establish the upper bound $J_2 \lesssim \pi^{_{(2)}}_{^{r, R}} + \pi_{r}\cE_{R}$,
    proving the bound on $J_2$. 
    \medskip
    
        It remains to prove the bound for $J_3$, for which we work on the domain $u^{-\gamma}h(R/2) \leq 1$, $\pi_{(u),r}\leq 1/2$.
     {Recall the definition of $J_3$ and apply} \eqref{eq:assmpt_theta_u_R} to the integrand to obtain 
    \begin{align*}
        J_3 & \lesssim  \int_{B(0, R/2)\setminus B(0,4r)}\int_{0}^1   \mathbb P \big( (0,u) \sim (y,v) \big)( \pi^{(2)}_{(v), R/2} + \cE_{R/2}) dvdy.
    \end{align*}
    Expanding the sum in the integrand, the right-hand side can be upper bounded by a constant multiple of 
    \begin{align}
        & \int_{B(0, R/2)\setminus B(0,4r)}\int_{0}^1   \mathbb P \big( (0,u) \sim (y,v) \big) \pi^{(2)}_{(v), R/2} dvdy\notag \\
        & + \cE_{{R}} \int_{B(0, R/2)\setminus B(0,4r)}\int_{0}^1   \mathbb P \big( (0,u) \sim (y,v) \big) dvdy,\label{eq:J_3_proof_bound}
    \end{align}
    using the assumption that $\cE_{R/2}\lesssim \cE_R$. To study the second summand, recall that the number of edges from $(0,u)$ to vertices in $B(0,r)^\complement$ is Poisson distributed, with expectation
    \[\lambda  \int_{B(0,r)^\complement}\int_{0}^1   \mathbb P \big( (0,u) \sim (y,v) \big) dvdy.
    \]
    Its probability of being positive is $\pi_{(u),r}$, which is assumed to be less than $1/2$, and therefore we can bound the integral above by some constant multiple of $\pi_{(u),r}$. This implies that the second summand in \eqref{eq:J_3_proof_bound} is bounded by a constant multiple of \smash{$\pi_{(u),r} \cE_{{R}}$}.
\smallskip

For the first summand of \eqref{eq:J_3_proof_bound}, we recall the function $h$ from Table~\ref{tab:gandh}. Lemma~\ref{lem:integral_edge_crossing} \eqref{eq:integral_edge_crossing_ball} states that the first summand can be bounded by 
\begin{align*}
        \int_{B(0, R/2)\setminus B(0,4r)}\int_{0}^1   \mathbb P \big( (0,u) \sim (y,v) \big) \pi^{(2)}_{(v), R/2} dvdy \lesssim \pi_{(u), R/2}^{(2)} \quad
        \text{ on $\{u^{-\gamma}h(R/2) \leq 1\}$. }
    \end{align*}
    The bound for $J_3$ follows from the bounds obtained for both terms in \eqref{eq:J_3_proof_bound}.
\end{proof}

\subsection{The bootstrap argument} \label{subsec:bootstrap_argument}
In Section~\ref{subsec:naive_upper_bound} we have shown that
\begin{align*}
    \theta_{r, R}  &\asymp  \pi_{r, R}^{(2)}  \text{ on }  D_0,\\
    \theta_{(u), R}  &\asymp  \pi_{(u), R}^{(2)}  \text{ on } \widetilde D_0,
\end{align*}
where $D_0$ and $\widetilde D_0$ are defined in \eqref{eq:naive_Dzero}, respectively~\eqref{def:set_tD1}. The idea behind the bootstrap argument lies in recursively applying the geometric argument given in Proposition~\ref{prop:bootstrapping} to show that these bounds also hold on increasingly larger domains $D_1, \widetilde D_1$; $D_2, \widetilde D_2$; $\ldots$. 
We then show in Lemma~\ref{lem:ksuffices} that, given $\epsilon>0$ as in the statement of Theorem~\ref{thm:two-edges}, we can choose a large (but finite) $k$ such that $D_k$ contains $\{r \ge R^\epsilon\}$ and $\widetilde D_k$ contains $\{u\leq R^{-\epsilon}\}$, proving Theorem~\ref{thm:two-edges}. Our argument relies on obtaining bounds for $\theta_{r,R}$ and $\theta_{(u),R}$ simultaneously. 
\medskip

Note that the bounds obtained for $\theta_{(u),R}$ in Proposition~\ref{prop:bootstrapping} 
require choosing the radius~$r$ appropriately within the range $[2,R/32]$. It is natural to choose a value $s=s(u,R)$ such that 
$\pi_{^{s,R}}^{_{(2)}}$ and $\pi_{^{(u),R}}^{_{(2)}}$ are comparable. Given that we eventually prove Theorem~\ref{thm:two-edges}, this can heuristically be viewed as requiring the event of escaping the ball $B(0,R)$ to be as likely when starting a path from a single (possibly strong) vertex $(0,u)$ as from a ball $B(0,s(u,R)).$ We thus define
\begin{equation}\label{eq:definition_s(u,R)}
s(u,R):= \inf\{s\in [2,R/32],\ \pi_{s,R}^{(2)} \ge \pi^{(2)}_{(u),R}\},
\end{equation}
with the convention $\inf \emptyset = R/32$. Because the infimum $s(u,R)$ may be attained  at boundary points of the interval $[2,R/32]$ we cannot guarantee that $\pi_{^{s,R}}^{_{(2)}}=\pi^{_{(2)}}_{^{(u),R}}$. However, Lemma~\ref{lem:utos} below shows that the quantities are always comparable and gives the asymptotic value of the function $s(u,R)$. 
Before stating this lemma, we
introduce the following notation. Recall~$\cD$ from \eqref{def:domain_D}. For a domain $D \subseteq \cD$, we write
\begin{align}\label{def:usll}
    f(u,r,R)\usll g(u,r,R) \text{ on }D,
\end{align}
if there exists
$a\ge 0$ such that 
$$f(u,r,R)\lesssim g(u,r,R) (\log R)^a \text{ on $D $,}$$  
and  
$f(u,r,R)\usl g(u,r,R)$ on $D$
if additionally $g(u,r,R) \,\smash{\usll}\, f(u,r,R)$ on $D$. 
\smallskip

\noindent
Note that  if $f_1 \,\smash{\usll}\, f_2$ and $g_1 \,\smash{\usll} \,g_2$ then $f_1 g_1 \,\smash{\usll} \,f_2 g_2$.
\medskip

\begin{lemma}\label{lem:utos} 
The function $s\colon (0,1]\to [2,R/32], u\mapsto s(u,R)$ 
is continuous. 
It satisfies
\begin{align}
    \pi_{s(u,R),R}^{(2)}&\asymp \pi_{(u),R}^{(2)},\label{asymp:s(u,R)_2e_crossing}\\
    \pi_{(u),s(u,R)}&\lesssim \pi_{s(u,R)} \label{asymp:s(u,R)_1e_crossing}
\end{align}
on the domain $\widetilde D_0^\complement$, 
where it also satisfies the following asymptotics,
\begin{equation} \label{eq:asymptotics_for_s(u,R)}
\begin{cases}
    s(u,R) \underset{^{{\log}}}\asymp u^{-1/d} &\text{ if }\delta\ge 1/\gamma,\\
      s(u,R) \asymp u^{-\gamma \delta/d} &\text{ if }\delta<1/\gamma.
\end{cases}
\end{equation}
\end{lemma}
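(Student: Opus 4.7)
My starting point is Remark~\ref{rem:continuity_monotonicity}, which tells me that the map $s\mapsto \pi^{(2)}_{s,R}$ is strictly increasing and continuous on $[2,R/32]$, while $u\mapsto \pi^{(2)}_{(u),R}$ is strictly decreasing and continuous on $(0,1]$. Consequently, for fixed $R$ the set $\{s\in[2,R/32]\colon \pi^{(2)}_{s,R}\geq \pi^{(2)}_{(u),R}\}$ is either empty or a closed interval of the form $[s_0,R/32]$; so $s(u,R)$ is either the unique solution of $\pi^{(2)}_{s,R}=\pi^{(2)}_{(u),R}$ in the interior $(2,R/32)$, or one of the boundary values $2$ or $R/32$. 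Continuity in $u$ then follows from continuity and strict monotonicity of the two sides.

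Next I would establish~\eqref{asymp:s(u,R)_2e_crossing} on $\widetilde D_0^\complement$ by handling the three positions of $s(u,R)$ separately. When $s(u,R)\in(2,R/32)$ the equality $\pi^{(2)}_{s(u,R),R}=\pi^{(2)}_{(u),R}$ is immediate by continuity. When $s(u,R)=2$, one direction is trivial, while the matching bound follows from Proposition~\ref{prop:asymp_escape_crossing_probabilities}, which yields $\pi^{(2)}_{(u),R}\gtrsim u^{-\gamma}h(R)\geq h(R)\asymp \pi^{(2)}_{2,R}$ using $u\le 1$. When $s(u,R)=R/32$, the set is either empty or $\{R/32\}$, so $\pi^{(2)}_{(u),R}\gtrsim\pi^{(2)}_{R/32,R}$ is automatic from the definition of the infimum; the reverse bound $\pi^{(2)}_{(u),R}\lesssim \pi_R\asymp \pi^{(2)}_{R/32,R}$ comes from Remark~\ref{rem:d0_tilde} combined with Proposition~\ref{prop:annulus_r_2r} and Lemma~\ref{lem:renormalization_constants}.

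The asymptotics~\eqref{eq:asymptotics_for_s(u,R)} then follow by plugging the explicit formulas from Proposition~\ref{prop:asymp_escape_crossing_probabilities} into $\pi^{(2)}_{s(u,R),R}\asymp \pi^{(2)}_{(u),R}$ and solving for $s$. When $\delta<1/\gamma$ I expect $\pi^{(2)}_{(u),R}$ to be dominated by either $u^{-\gamma\delta}g(R)$ or $u^{-\gamma}h(R)$, while $\pi^{(2)}_{s,R}\asymp s^d g(R)+s^{d/\delta}h(R)$ (with a minor modification at the boundary exponent $\delta=1/\gamma-1$); matching leading orders in both terms yields $s\asymp u^{-\gamma\delta/d}$. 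When $\delta\geq 1/\gamma$ the corresponding matching gives $s^d\usl u^{-1}$, with the logarithmic imprecision coming from the $\log R$ factor appearing in $\pi_{r,R}$ in this regime, which is the source of the $\usl$ rather than $\asymp$.

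Finally I would verify~\eqref{asymp:s(u,R)_1e_crossing} by substituting the asymptotic value of $s(u,R)$ into Proposition~\ref{prop:asymp_escape_crossing_probabilities}. Taking $s\asymp u^{-\gamma\delta/d}$ (resp.\ $s\usl u^{-1/d}$) and using that $\pi_{(u),s}\asymp (u^{-\gamma\delta}g(s))\wedge 1$ (resp.\ $u^{-1}g(s)$) while $\pi_s\asymp s^d g(s)$ up to logarithmic factors, both sides turn out to coincide up to the same factor, proving the desired inequality. The main obstacle I anticipate is the careful bookkeeping of the logarithmic factors at the boundary exponents $\delta=1/\gamma$ and $\delta=1/\gamma-1$, for which I would need to rely on the refined bounds of Corollary~\ref{cor:2e-probabilities-boundarycases}, and the verification that in the boundary scenarios $s(u,R)\in\{2,R/32\}$ the parameter $(u,R)$ lies at an endpoint of $\widetilde D_0^\complement$ where the claimed asymptotics still make sense; once one checks that $\widetilde D_0^\complement$ is precisely the regime where $\pi^{(2)}_{(u),R}$ lies between $\pi^{(2)}_{2,R}$ and $\pi^{(2)}_{R/32,R}$ up to constants, this reduces to a direct comparison.
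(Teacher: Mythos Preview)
Your overall strategy matches the paper's proof closely: continuity via the strict monotonicity and continuity of the two maps, then a three-case split on the position of $s(u,R)$ in $[2,R/32]$ for~\eqref{asymp:s(u,R)_2e_crossing}, and finally matching the explicit asymptotics of Proposition~\ref{prop:asymp_escape_crossing_probabilities} to extract~\eqref{eq:asymptotics_for_s(u,R)} and~\eqref{asymp:s(u,R)_1e_crossing}. The boundary cases $s(u,R)=2$ and $s(u,R)=R/32$ are handled exactly as you describe, using Remark~\ref{rem:d0_tilde} and Lemma~\ref{lem:renormalization_constants}.

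There is, however, one genuine gap in your treatment of~\eqref{asymp:s(u,R)_1e_crossing} in the regime $\delta\ge 1/\gamma$. You propose to use $s\usl u^{-1/d}$, i.e.\ an asymptotic that is only correct up to powers of $\log R$ in \emph{either} direction, and then conclude that $\pi_{(u),s}\asymp u^{-1}g(s)$ and $\pi_s\asymp s^d(\log s) g(s)$ coincide ``up to the same factor''. This does not suffice: inequality~\eqref{asymp:s(u,R)_1e_crossing} is a genuine $\lesssim$, not a $\usl$, so you need the one-sided bound $u^{-1}\lesssim s^d\log s$, and the two-sided $\usl$ relation cannot deliver this. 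The paper obtains it by a dichotomy on which of the two summands in
\[
s^d(\log R)g(R)+s^{d\gamma}(\log s)^{\gamma}h(R)\asymp u^{-1}g(R)+u^{-\gamma}h(R)
\]
dominates: if the second term on the left dominates one reads off $s^d\log s\gtrsim u^{-1}$ directly, while if the first dominates one first deduces $s\gtrsim R/(\log R)^\beta$, whence $\log R\lesssim\log s$, and then $s^d\log s\gtrsim s^d\log R\gtrsim u^{-1}$. Corollary~\ref{cor:2e-probabilities-boundarycases} alone will not give you this directional control; you need to argue as above. Once this is in place, multiplying by $g(s)$ immediately yields~\eqref{asymp:s(u,R)_1e_crossing}. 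The case $\delta>1/\gamma$ is handled by the same dichotomy with minor changes.
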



\begin{proof}
    The continuity statement follows directly from continuity and  strict monotonicity of \smash{$u\mapsto \pi_{^{(u),R}}^{_{(2)}}$} and  \smash{$r\mapsto \pi_{^{r,R}}^{_{(2)}}$}, see Lemma~\ref{lem:monotone} and Remark~\ref{rem:continuity_monotonicity}. This continuity argument also yields \smash{$\pi_{^{s(u,R),R}}^{_{(2)}}= \pi_{^{(u),R}}^{_{(2)}}$} when $s(u,R)$ is in the open interval $(2,R/32)$.
    On the domain $\{s(u,R)=2\}$, we have by \smash{$\pi_{^{2,R}}^{_{(2)}}\ge \pi_{^{(u),R}}^{_{(2)}}$} by definition of $s(u,R)$. Using the monotonicity of $\pi_{^{(u),R}}^{_{(2)}}$ and the estimates in \eqref{asymp:2e-escape-probabilities} and \eqref{asymp:2e-crossing-probabilites}, we also obtain \smash{$\pi_{^{(u),R}}^{_{(2)}}\ge \pi_{^{(1),R}}^{_{(2)}}\asymp \pi_{^{2,R}}^{_{(2)}}$}, so we conclude
    \smash{$\pi_{_{s(u,R),R}}^{_{(2)}} \asymp \pi_{_{(u),R}}^{_{(2)}}$}.      
    Similarly, on \smash{$\{s(u,R)=R/32\}\cap \widetilde D_0^\complement,$} we have both \smash{$\pi_{^{s(u,R),R}}^{_{(2)}}\le \pi_{^{(u),R}}^{_{(2)}}$} by definition of $s(u,R)$ in~\eqref{eq:definition_s(u,R)}, 
\smash{$\pi_{^{s(u,R),R}}^{_{(2)}}=\pi_{^{R/32,R}}^{_{(2)}}\asymp \pi_R$} by~\eqref{eq:annulus_r_2r_crossing_with_one_or_two_edges} and Lemma~\ref{lem:renormalization_constants},
    and \smash{$\pi_R\gtrsim \pi_{^{(u),R}}^{_{(2)}}$} by~\eqref{ineq:defining_tildeD0_complement}. Combining our results, we obtain~\eqref{asymp:s(u,R)_2e_crossing} on this domain.
    \smallskip

We now prove \eqref{asymp:s(u,R)_1e_crossing} and \eqref{eq:asymptotics_for_s(u,R)} simultaneously by using~\eqref{asymp:s(u,R)_2e_crossing} together with the asymp\-totic expressions obtained in Proposition~\ref{prop:asymp_escape_crossing_probabilities} for the one edge and two edge crossing probabilities. We consider the different cases for $\gamma$ and $\delta$.\smallskip

\noindent
$\bullet$ Suppose first that $2<\delta<\frac{1}{\gamma}-1$, so that $\pi_{^{s(u,R),R}}^{_{(2)}}\asymp\pi_{s(u,R),R}$ and $\pi_{^{(u),R}}^{_{(2)}}\asymp\pi_{(u),R}$. In this case,  \eqref{asymp:1e-escape-probabilities} and \eqref{asymp:1e-crossing-probabilites} together with~\eqref{asymp:s(u,R)_2e_crossing} gives
\[s(u,R)^{d}g(R)\asymp u^{-\gamma \delta}g(R).\]
Dividing by $g(R)$ gives $s(u,R)\asymp u^{-\gamma\delta/d}$, and multiplying by $g(s(u,R))$ gives $ \pi_{(u),s(u,R)}\asymp \pi_{s(u,R)}$, using again \eqref{asymp:1e-escape-probabilities} and \eqref{asymp:1e-crossing-probabilites}, thus proving \eqref{asymp:s(u,R)_1e_crossing} and \eqref{eq:asymptotics_for_s(u,R)}.
\smallskip

$\bullet$ The case \smash{$\frac{1}{\gamma}-1\leq \delta<\frac{1}{\gamma}$} is only slightly more difficult, since combining~\eqref{asymp:s(u,R)_2e_crossing} together with the asymptotic expressions obtained in Proposition~\ref{prop:asymp_escape_crossing_probabilities} yields
\[s(u,R)^dg(R)+s(u,R)^{\frac{d}{\delta}}h(R)\asymp u^{-\gamma\delta}g(R)+u^{-\gamma}h(R).\]
In particular there is some $C>0$ such that for all $R$ large,
\begin{equation}\label{eq: asymptotic_sum}
s(u,R)^dg(R)+s(u,R)^{\frac{d}{\delta}}h(R)\ge C\big(u^{-\gamma\delta}g(R)+u^{-\gamma}h(R)\big),\end{equation} so either $s(u,R)^dg(R)\geq Cu^{-\gamma\delta}g(R)$ or $s(u,R)^{\frac{d}{\delta}}h(R)\ge Cu^{-\gamma}h(R)$ holds, depending on which term on the left hand side of \eqref{eq: asymptotic_sum} is bigger. In both cases we arrive at $s(u,R)\ge Cu^{-\gamma\delta/d}$ from which we conclude that $s(u,R)\lesssim u^{-\gamma\delta/d}$. Following the same steps we can prove that $s(u,R)\gtrsim u^{-\gamma\delta/d}$ and hence $s(u,R)\asymp u^{-\gamma\delta/d}$, which already proves \eqref{eq:asymptotics_for_s(u,R)} and can be used to prove \eqref{asymp:s(u,R)_1e_crossing} as in the previous case.\medskip

\noindent
$\bullet$ The case \smash{$\delta=\frac{1}{\gamma}$} requires a slightly more delicate analysis. In this case~\eqref{asymp:s(u,R)_2e_crossing} together with the asymptotic expressions from Proposition~\ref{prop:asymp_escape_crossing_probabilities} gives, for $s=s(u,R)$,
\begin{equation}\label{eq: asymptotic_sum_3}s^d\log(R)g(R)+s^{d\gamma}(\log(s))^{\gamma}h(R)\asymp u^{-1}g(R)+u^{-\gamma}h(R).\end{equation}
 We claim that there is $C'>0$ such that
$s^{d}\log(s)\ge C' u^{-1}$.
To see this, for every $R$ large we have two possible scenarios. First, if $s^{d\gamma}(\log(s)^{\gamma}h(R)\ge s^d\log(R)g(R)$, then we deduce
 from \eqref{eq: asymptotic_sum_3} that there exists $C>1$ such that for all large $R$ we have $2s^{d\gamma}(\log(s))^{\gamma}\ge C u^{-\gamma}h(R)$ from which the claim follows. Second, if $s^d\log(R)g(R)\ge s^{d\gamma}(\log(s)^{\gamma}h(R)$ we plug the asymptotic expressions for $g(R)$ and $h(R)$ from Table~\ref{tab:gandh} in this inequality and get $s\geq C'R/(\log(R))^{\beta}$ for some $C'>0$ and $\beta>0$ independent of $R$. In particular, for $R$ large  we have $\log(R)\leq 2\beta\log(s)$ and hence from \eqref{eq: asymptotic_sum_3} we deduce $4\beta s^{d}\log(s)\ge C u^{-1}$, which again gives the claim.\smallskip
 
 From $s^{d}\log(s)\ge C' u^{-1}$ it  directly follows that $u^{-1/d}$ \smash{\tiny\raisebox{3pt}{$\usll$}} $s(u,R)$, thus giving the correct lower bounds for \eqref{eq:asymptotics_for_s(u,R)}. Also, multiplying the inequality by $g(s)$ and using the asymptotic expressions in \eqref{asymp:1e-escape-probabilities} and \eqref{asymp:1e-crossing-probabilites} gives \eqref{asymp:s(u,R)_1e_crossing}. Finally, observe that \eqref{eq: asymptotic_sum_3} together with the bound $\log(R)\geq\log(s)$ also implies the existence of a constant $C>0$ such that
 \[C\big(s^d\log(s)g(R)+s^{d\gamma}(\log(s)^{\gamma}h(R)\big)\le u^{-1}g(R)+u^{-\gamma}h(R),\]
 from which $s(u,R)\,\smash{\usll}\, u^{-1/d}$ follows using the same steps as in the case  $\frac{1}{\gamma}-1\leq \delta<\frac{1}{\gamma}$.\medskip

\noindent 
$\bullet$ In the case $\delta>\frac{1}{\gamma}$, the asymptotic equality~\eqref{asymp:s(u,R)_2e_crossing} gives
\[s^d\log(R)g(R)+s^{d\gamma}h(R)\asymp u^{-1}g(R)+u^{-\gamma}h(R).\]
The proof in this case is analogous to the one with $\delta=\frac{1}{\gamma}$ so we omit the details.\end{proof}

We are now ready to recursively increase the domain on which our main result holds, {see Figure~\ref{fig:bootstrap} for an illustration. For every $R\ge4$ we define a decreasing sequence $(r_k(R))_{k\ge 0}$},~by
\begin{align}
\begin{split}
    r_0(R)&:=R/32, \\
    r_{k+1}(R)&:= \inf\left\{r \in [2, r_k(R)],\  \pi_{r,R}^{(2)}\ge \pi_{r}\ \pi_{r_{k}(R),R}^{(2)}\right\} \quad \text{ for any } k\ge 0. 
\end{split}    
\label{eq:radius_r_kplus1}
\end{align}
\begin{figure}[h]
    \centering
    \includegraphics[width=10cm]{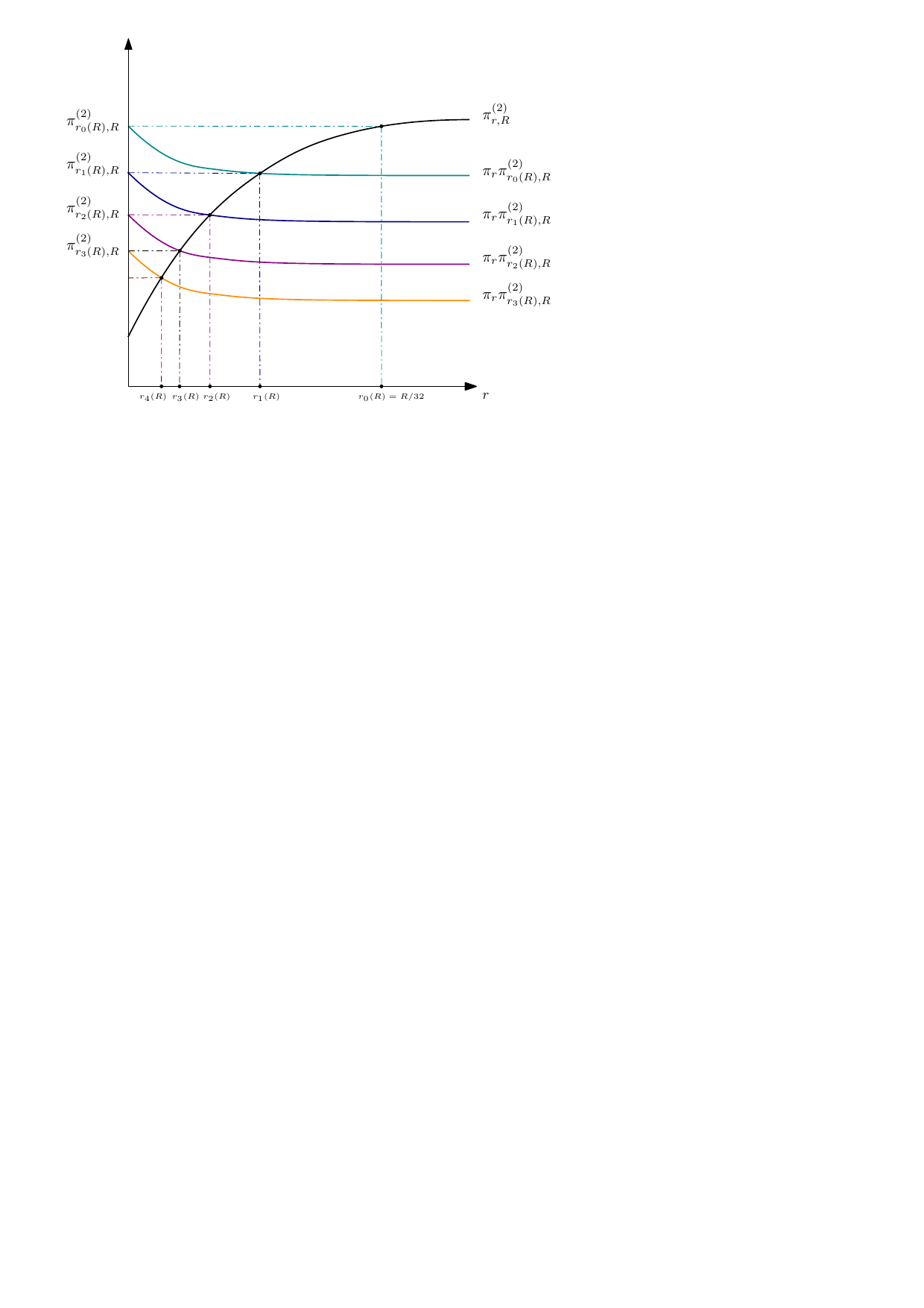}
    \caption{An illustration how the geometric argument
     will be used. Successive application of Proposition~4.3 gives bounds which are the maximum of the black and the coloured lines, so that the domain for \smash{$\theta_{r,R} \lesssim \pi^{(2)}_{r,R}$} after $k$ applications is the interval to the right of the intersection points $r_k(R)$. }
    \label{fig:bootstrap}
\end{figure}
\begin{lemma}\label{lem:recursive_def_rk}
    For any $k\ge 0,$ the following holds,
    \begin{align}
    &r_k(R)\underset{R\to \infty} \longrightarrow \infty.\label{eq:rk_div} \\[0.8ex]
    &\pi_{^{r_{k+1}(R),R}}^{_{(2)}}= \pi_{r_{k+1}(R)} \pi_{^{r_{k}(R),R}}^{_{(2)}}\;\text{ for all large $R$}\label{eq:recursive_def_rk}.\\[1.2ex]
    &r_k(R)\lesssim r_{k}(2R)\label{eq:rk_comparable}.
\end{align}\end{lemma}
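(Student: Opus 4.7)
The plan is to prove (1), (2), and (3) simultaneously by induction on $k$. The base case $k = 0$ is immediate since $r_0(R) = R/32$: statements (1) and (3) are trivial, while (2) is a statement about $r_1$ to be handled by the inductive step. Assuming (1) and (3) hold at step $k$, I would first derive (2) at step $k$, and then deduce (1) and (3) at step $k+1$.

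To establish (2), the plan is to show that $r_{k+1}(R)$ lies in the open interval $(2, r_k(R))$ for all sufficiently large $R$; continuity and strict monotonicity of $r \mapsto \pi^{(2)}_{r, R}$ and $r \mapsto \pi_r$ (Lemma~\ref{lem:monotone} and Remark~\ref{rem:continuity_monotonicity}) will then force the defining infimum to be attained with equality. The upper bound $r_{k+1}(R) < r_k(R)$ is automatic since $\pi_{r_k(R)} < 1$, so the defining set $S_k(R) := \{r \in [2,r_k(R)] : \pi^{(2)}_{r,R} \geq \pi_r \pi^{(2)}_{r_k(R),R}\}$ contains an open neighbourhood of $r_k(R)$. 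For the lower bound $r_{k+1}(R) > 2$, I would combine the induction hypothesis $r_k(R) \to \infty$ with the asymptotics of Proposition~\ref{prop:asymp_escape_crossing_probabilities}: because $\pi^{(2)}_{r,R}$ grows as a positive power of $r$ (up to logarithmic corrections, with the exponent depending on the subregime), one obtains $\pi^{(2)}_{r_k(R),R}/\pi^{(2)}_{2,R}\to\infty$ as $R \to \infty$, which yields $\pi^{(2)}_{2, R} < \pi_2 \pi^{(2)}_{r_k(R), R}$ for large $R$ and hence $2 \notin S_k(R)$.

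For (1) at step $k+1$, I would argue by contradiction: if $r_{k+1}(R_n) \leq C$ along a sequence $R_n \to \infty$, then $\pi_{r_{k+1}(R_n)} \geq \pi_C > 0$ because $r \mapsto \pi_r$ is non-increasing (visible from Theorem~\ref{thm:main_R_alpha_R}). Combining the equality from (2) with monotonicity of $\pi^{(2)}_{\cdot, R}$ in its first argument then yields $\pi^{(2)}_{C, R_n} \geq \pi_C \pi^{(2)}_{r_k(R_n), R_n}$, which contradicts the divergence of $\pi^{(2)}_{r_k(R_n), R_n}/\pi^{(2)}_{C, R_n}$ already established in the proof of (2).

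The most delicate step, and the main obstacle, is (3). The plan is to exhibit a constant $K$ such that $K r_{k+1}(2R)$ lies in $S_k(R)$, which yields $r_{k+1}(R) \leq K r_{k+1}(2R)$. Writing $r^\star := r_{k+1}(2R)$, by (2) at scale $2R$ we have $\pi^{(2)}_{r^\star, 2R} = \pi_{r^\star} \pi^{(2)}_{r_k(2R), 2R}$. Since all radii involved are at most $R/32$, the scaling Lemma~\ref{lem:renormalization_constants} provides comparability (up to constants) between $\pi^{(2)}_{\cdot, R}$ and $\pi^{(2)}_{\cdot, 2R}$, and between $\pi_r$ and $\pi_{K r}$; combining these with the induction hypothesis $r_k(R) \lesssim r_k(2R)$ and monotonicity of $\pi^{(2)}_{\cdot, R}$, I obtain $\pi_{K r^\star} \pi^{(2)}_{r_k(R), R} \lesssim \pi^{(2)}_{K r^\star, R}$. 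The hard point will be to upgrade this $\lesssim$ into the actual inequality required by $S_k(R)$; to absorb the implicit constants I plan to take $K$ large enough that the polynomial growth of $\pi^{(2)}_{\cdot, R}$ in its first argument (again from Proposition~\ref{prop:asymp_escape_crossing_probabilities}) dominates. If $K r^\star \leq r_k(R)$ this places $K r^\star$ in $S_k(R)$; otherwise the trivial bound $r_{k+1}(R) \leq r_k(R) \leq K r^\star$ already gives the claim.
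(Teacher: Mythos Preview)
Your proposal is correct and follows essentially the same inductive route as the paper, which differs only in that it proves \eqref{eq:rk_div} directly (showing any fixed $r$ eventually lies below $r_{k+1}(R)$) rather than via \eqref{eq:recursive_def_rk} and contradiction, and then handles \eqref{eq:rk_comparable} exactly as you outline by invoking the explicit asymptotics of Proposition~\ref{prop:asymp_escape_crossing_probabilities} to show that the ratio $\pi^{(2)}_{Kr,R}/\pi_{Kr}$ grows like a positive power of $K$. One small imprecision: $r\mapsto\pi_r$ is not literally non-increasing (Theorem~\ref{thm:main_R_alpha_R} gives only up-to-constants asymptotics), but your contradiction argument is easily repaired by using $\inf_{r\in[2,C]}\pi_r>0$ instead.
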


\begin{proof}
    We begin by showing \eqref{eq:rk_div} by induction. The case $k=0$ follows immediately from the definition of $r_0(R) = R/32$. Next, suppose that $r_k(R)$ tends to infinity as $R\uparrow\infty$. We claim that for any fixed $r>2$, 
    \begin{align}\label{eq:pi_rk_order_limit}
        \lim_{R\to \infty} \frac{\pi_{r,R}^{(2)}}{\pi_{r_k(R),R}^{(2)}} = 0.
    \end{align}
    {We write out the proof only for the case  $\delta > 1/\gamma$ as the other cases can be shown similarly.} 
    The bounds \eqref{asymp:2e-crossing-probabilites} and \eqref{asymp:1e-crossing-probabilites} on $\pi_{^{r,R}}^{_{(2)}}$ and $\pi_{r,R}$ yield 
    \begin{align*}
        \pi_{r,R}^{(2)} & \asymp r^d\log(R) g(R) + r^{d\gamma}h(R), \text{ and }\\
        \pi_{r_k(R),R}^{(2)} & \asymp (r_k(R))^d\log(R) g(R) + (r_k(R))^{d\gamma}h(R).
    \end{align*} 
    Thus, \smash{$  \pi_{^{r,R}}^{_{(2)}}/  \pi_{^{r_k(R),R}}^{_{(2)}} \lesssim (r/r_k(R))^d + (r/r_k(R))^{d\gamma}$}. Using the assumption that $r_k(R)$ tends to infinity and $r$ is fixed, the desired limit \eqref{eq:pi_rk_order_limit} follows.
    Hence, for large $R$ {and fixed $r$}, we have
    \smash{$   \pi_{^{r,R}}^{_{(2)}}< \pi_{r} \pi_{^{r_k(R),R}}^{_{(2)}}.$} 
    By definition of $r_{k+1}$ in~\eqref{eq:radius_r_kplus1} we therefore have that $r\le r_{k+1}(R)$ for all $R$ large. Since this holds for all fixed $r>2$, this implies that $r_{k+1}(R)\to \infty$ as~$R \to \infty.$
    \smallskip\pagebreak[3]

    {The equality \eqref{eq:recursive_def_rk} follows readily from continuity in $r$ of $\pi_r$ and $\pi_{^{r,R}}^{_{(2)}},$ once we ensure that $r_{k+1}(R)$ is in the open interval $(2,r_k(R))$.}
    This follows from the fact that it tends to infinity, and that $r_{k+1}(R)<r_k(R)$,
    which is easy to see. \smallskip

    {Finally, we show \eqref{eq:rk_comparable} by induction, with the case $k=0$ following immediately from the definition of $r_0(R) = R/32$. Suppose now that $r_k(R)\lesssim r_k(2R)$, and let $C>0$ be a large  constant that we determine later. By \eqref{eq:rk_div} and \eqref{eq:recursive_def_rk}, we can find a large constant~$R_0$ depending on $C$, such that for all $R\geq R_0$ we have \smash{$\pi_{^{r_{k+1}(2R),2R}}^{_{(2)}}= \pi_{r_{k+1}(2R)} \pi_{^{r_{k}(2R),2R}}^{_{(2)}}$} and $r_{k+1}(2R)\geq C$. 
    For such $R$ we prove \eqref{eq:rk_comparable} considering only the case $\delta > 1/\gamma$, as the remaining cases are analogous. Let $r'=Cr_{k+1}(2R)$. By \eqref{asymp:1e-crossing-probabilites}, \eqref{asymp:2e-crossing-probabilites} and Lemma~\ref{lem:renormalization_constants}, there exist constants $c_1$, $c_2$ such that
    \begin{align*}\frac{\pi_{^{r',R}}^{_{(2)}}}{\pi_{r'}}
    & \ge c_1\frac{C^{d}(r_{{k+1}}(2R))^d\log(R)g(R)+C^{d\gamma}(r_{{k+1}}(2R))^{d\gamma}h(R)}{C^{d(2-\frac{1}{\gamma})}(r_{{k+1}}(2R))^{d(2-\frac{1}{\gamma})}\log(r_{{k+1}}(2R))} \\ & \geq c_1c_2C^{2d(\frac{1}{\gamma}-1)}\frac{\pi_{^{r_{k+1}(2R),2R}}^{_{(2)}}}{\pi_{r_{k+1}(2R)}},\end{align*}
    where we used $Cr_{k+1}(2R)\leq (r_{k+1}(2R))^2$ in the first inequality, to handle the logarithmic term. Moreover the induction hypothesis gives a constant $c>0$ such that $r_k(2R)\geq c r_k(R)$, so using that $R\geq R_0$ and Lemma~\ref{lem:renormalization_constants}, there is a constant $c_3>0$ such that
    \[\frac{\pi_{^{r_{k+1}(2R),2R}}^{_{(2)}}}{\pi_{r_{k+1}(2R)}}=\pi_{^{r_{k}(2R),2R}}^{_{(2)}}\geq \pi_{^{cr_{k}(R),2R}}^{_{(2)}}\geq c_3\pi_{^{r_{k}(R),R}}^{_{(2)}}.\]
    Combining this inequality with the previous one we obtain, for sufficiently large $C$, 
    \[\frac{\pi_{^{r',R}}^{_{(2)}}}{\pi_{r'}}\ge c_1c_2c_3C^{2d(\frac{1}{\gamma}-1)}\pi_{^{r_{k}(R),R}}^{_{(2)}}\geq \pi_{^{r_{k}(R),R}}^{_{(2)}}.\]
    Hence $r'$ satisfies \smash{$\pi_{^{r',R}}^{_{(2)}}\ge \pi_{r'}\pi_{^{r_{k}(R),R}}^{_{(2)}}$}. Now, if $Cr_{k+1}(2R)\le R/32$, then by definition of $r_{k+1}(R)$ we have $r_{k+1}(R)\leq Cr_{k+1}(2R)$, while if $Cr_{k+1}(2R)\ge R/32$ the same inequality holds since $r_{k+1}(R)\leq R/32$. In either case, we conclude \eqref{eq:rk_comparable} for $k+1$.}\end{proof}

{We now introduce two sequences of increasing domains $(D_k)_{k\geq 0}$ and $(\widetilde D_k)_{k\geq 0}$, and state 
in the next proposition 
that our main result, Theorem \ref{thm:two-edges}, holds on these domains. Define}
\begin{align}
D_k&:=\{r_k(R)\le r \le R/2\}, \label{def:domain_D_k}\\
\widetilde D_k &:= \{s(u,R)\ge r_k(R)\}\, \cup \, \widetilde D_0,\label{def:domain_D_k_tilde}
\end{align}
for $k\geq 1$ and recall $D_0$ and $\widetilde D_0$ from \eqref{eq:def_Dzero} and \eqref{def:set_tD1}. It follows directly from the definition that the sequences $(D_k)_{k\geq 0}$ and \smash{$(\widetilde D_k)_{k\geq 0}$} are increasing. 
\pagebreak[3]

\begin{proposition} \label{prop:induction_on_domain}
    Let $(r_k(R))_{k\geq 0}$ be the sequence of functions defined by \eqref{eq:radius_r_kplus1}. For any $k\ge 0$, we have that
      \begin{align*}
      \theta_{r, R} & \asymp \pi_{r, R}^{(2)}
    \quad  \text{ on } D_{k},
       \\[2mm]
      \theta_{(u), R} & \asymp 
                 \pi_{(u), R}^{(2)} 
                 \quad \text{ on } \widetilde D_{k}.
  \end{align*}  
\end{proposition}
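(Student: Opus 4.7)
The plan is to proceed by induction on $k$, with the base case already delivered by \eqref{eq:naive_Dzero} and Lemma~\ref{lem:naive}. To make the bootstrap machinery of Proposition~\ref{prop:bootstrapping} run cleanly, I will strengthen the induction hypothesis by carrying along, at each level $k$, the global bounds
\begin{align*}
\theta_{r,R}\lesssim \pi_{r,R}^{(2)}+\cE_R^{(k)}, \qquad \theta_{(u),R}\lesssim \pi_{(u),R}^{(2)}+\cE_R^{(k)} \quad\text{on }\mathcal D,
\end{align*}
where $\cE_R^{(k)}:=\pi_{r_k(R),R}^{(2)}$. At $k=0$ this is exactly what \eqref{eq: initial_upper} gives, since $\pi_R\asymp \pi_{R/32,R}^{(2)}=\cE_R^{(0)}$ by Lemma~\ref{lem:renormalization_constants}. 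The admissibility condition $\cE_R^{(k)}\lesssim \cE_{2R}^{(k)}$ required to apply Proposition~\ref{prop:bootstrapping} is immediate from \eqref{eq:rk_comparable} of Lemma~\ref{lem:recursive_def_rk} combined with Lemma~\ref{lem:renormalization_constants}.

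For the inductive step $k\to k+1$, I feed these global bounds into Proposition~\ref{prop:bootstrapping} to get
\begin{align*}
\theta_{r,R}&\lesssim \pi_{r,R}^{(2)}+\pi_r\,\cE_R^{(k)},\\
\theta_{(u),R}&\lesssim \pi_{(u),R}^{(2)}+\pi_{r,R}^{(2)}+(\pi_{(u),r}+\pi_r)\,\cE_R^{(k)}\quad\text{for any }r\in[2,R/32].
\end{align*}
The definition of $r_{k+1}(R)$ in \eqref{eq:radius_r_kplus1} is precisely engineered so that, for $r\ge r_{k+1}(R)$, one has $\pi_r\,\cE_R^{(k)}\le \pi_{r,R}^{(2)}$. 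This immediately yields $\theta_{r,R}\asymp \pi_{r,R}^{(2)}$ on $D_{k+1}$, and hence (using monotonicity of $\pi_{r,R}^{(2)}$ in $r$ together with the identity $\pi_{r_{k+1}(R)}\,\cE_R^{(k)}=\pi_{r_{k+1}(R),R}^{(2)}=\cE_R^{(k+1)}$ from \eqref{eq:recursive_def_rk}) the global bound $\theta_{r,R}\lesssim \pi_{r,R}^{(2)}+\cE_R^{(k+1)}$ on all of $\mathcal D$.

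For the escape probability, the natural choice is $r=s(u,R)$. On $\widetilde D_0^\complement$, Lemma~\ref{lem:utos} gives $\pi_{s(u,R),R}^{(2)}\asymp \pi_{(u),R}^{(2)}$ and $\pi_{(u),s(u,R)}\lesssim \pi_{s(u,R)}$, so the bootstrap bound collapses to
\[
\theta_{(u),R}\lesssim \pi_{(u),R}^{(2)}+\pi_{s(u,R)}\,\cE_R^{(k)} \quad\text{on }\widetilde D_0^\complement.
\]
On the set $\{s(u,R)\ge r_{k+1}(R)\}\cap \widetilde D_0^\complement$ the same definitional property of $r_{k+1}(R)$ applied with $r=s(u,R)$ absorbs the error term into $\pi_{(u),R}^{(2)}$, while on $\widetilde D_0\subseteq \widetilde D_{k+1}$ the base case already gives the conclusion; together these cover $\widetilde D_{k+1}$ and yield $\theta_{(u),R}\asymp \pi_{(u),R}^{(2)}$ there. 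Outside $\widetilde D_{k+1}$ the monotonicity bound $\pi_{s(u,R)}\le \pi_{r_{k+1}(R)}$ combined with \eqref{eq:recursive_def_rk} upgrades the previous display to the new global bound $\theta_{(u),R}\lesssim \pi_{(u),R}^{(2)}+\cE_R^{(k+1)}$, completing the induction.

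The main obstacle I anticipate is keeping the global bound on $\theta_{(u),R}$ alive across the iteration: unlike for $\theta_{r,R}$, one cannot simply use monotonicity to replace $u$ by a worst-case value, and the naive bound $\theta_{(u),R}\lesssim \pi_R$ would stop the decay after one step. The point of choosing $r=s(u,R)$ and invoking Lemma~\ref{lem:utos} is exactly to let the factor $\pi_{(u),r}+\pi_r$ in Proposition~\ref{prop:bootstrapping} be controlled by $\pi_{s(u,R)}$ and then be absorbed via the definitional property of $r_{k+1}(R)$; verifying that the $\widetilde D_0$ versus $\widetilde D_0^\complement$ case split glues together correctly is the only genuinely delicate bookkeeping step.
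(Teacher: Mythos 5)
Your overall architecture is the same as the paper's (induction on $k$, feeding Proposition~\ref{prop:bootstrapping} with the error $\cE_R=\pi^{(2)}_{r_{k}(R),R}$, choosing $r=s(u,R)$ and invoking Lemma~\ref{lem:utos}, and absorbing the error via the defining relation of $r_{k+1}(R)$); the only structural difference is that you carry the global bounds $\theta\lesssim\pi^{(2)}+\cE^{(k)}_R$ on all of $\mathcal D$ as a strengthened induction hypothesis, whereas the paper re-derives them at the start of each step. But the step where you propagate this strengthened hypothesis for the escape probabilities contains a genuine error: outside $\widetilde D_{k+1}$ you have $s(u,R)<r_{k+1}(R)$, and the quantity $\pi_r=\pi_{r,2r}$ is, up to constants, \emph{decreasing} in $r$ (by \eqref{asymp:1e-crossing-probabilites}, $\pi_r\asymp r^{d(2-\delta)}$ or $r^{d(2-1/\gamma)}\log r$, with negative exponent since $\delta>2$, $\gamma<1/2$), so the "monotonicity bound" $\pi_{s(u,R)}\le\pi_{r_{k+1}(R)}$ goes the wrong way. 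Concretely, for $u$ of order one $s(u,R)$ stays bounded, so $\pi_{s(u,R)}\cE^{(k)}_R\asymp\cE^{(k)}_R$, which for large $k$ dominates both $\pi^{(2)}_{(u),R}$ and $\cE^{(k+1)}_R=\pi^{(2)}_{r_{k+1}(R),R}$; hence the display $\theta_{(u),R}\lesssim\pi^{(2)}_{(u),R}+\pi_{s(u,R)}\cE^{(k)}_R$ cannot be "upgraded" to the new global bound in the way you claim, and without that bound the next application of Proposition~\ref{prop:bootstrapping} is not justified, so the induction stalls.

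The correct route — and the one the paper uses — is monotonicity in $u$, not in $r$: using the continuity of $u\mapsto s(u,R)$ from Lemma~\ref{lem:utos}, one identifies $\widetilde D_{k+1}^{\complement}\cap\{R\ge R_0\}$ with $\{u\ge u_{k+1}(R)\}$ where $s(u_{k+1}(R),R)=r_{k+1}(R)$, and then Lemma~\ref{lem:monotone} together with the asymptotics just proved on $\widetilde D_{k+1}$ gives $\theta_{(u),R}\le\theta_{(u_{k+1}(R)),R}\asymp\pi^{(2)}_{(u_{k+1}(R)),R}\asymp\pi^{(2)}_{r_{k+1}(R),R}$, which is exactly the needed global bound with error $\cE^{(k+1)}_R$. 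A secondary, more minor imprecision: the statement that "for $r\ge r_{k+1}(R)$ one has $\pi_r\cE^{(k)}_R\le\pi^{(2)}_{r,R}$" is not literally a consequence of the infimum definition \eqref{eq:radius_r_kplus1} (the defining set is not a priori an up-set); it holds up to constants only after combining $\pi_r\lesssim\pi_{r_{k+1}(R)}$ (from the explicit asymptotics of $\pi_r$) with the equality \eqref{eq:recursive_def_rk} and the monotonicity of $r\mapsto\pi^{(2)}_{r,R}$, which is how the paper argues.
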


   \begin{proof}
   The proof of this proposition proceeds by induction: specifically we show that the bounds $\theta_{r,R}\asymp \pi_{^{r,R}}^{_{(2)}}$ on $D_k$ and $\theta_{(u),R}\asymp \pi_{^{(u),R}}^{_{(2)}}$ on~\smash{$\widetilde D_k$} hold jointly at each step $k$. The case $k=0$ is proved in \eqref{eq:naive_Dzero} and in Lemma~\ref{lem:naive} \eqref{eq:naive_u}. For the inductive step, fix an integer $k\ge0$ and suppose that the asymptotic equivalences $\theta_{r,R}\asymp \pi_{^{r,R}}^{_{(2)}}$ on $D_k$ and $\theta_{(u),R}\asymp \pi_{^{(u),R}}^{_{(2)}}$ on~\smash{$\widetilde D_k$} hold, so it remains to prove that these hold on $D_{k+1}\setminus D_k$ and \smash{$\widetilde{D}_{k+1}\setminus\widetilde{D}_k$}, respectively. Define $\cE_R=\pi_{^{r_k(R),R}}^{_{(2)}}$, and observe that Lemma~\ref{lem:recursive_def_rk} \eqref{eq:rk_comparable}, together with monotonicity and Lemma~\ref{lem:renormalization_constants}, gives some constant $c>0$ such that
   \[\cE_{2R}=\pi_{^{r_k(2R),2R}}^{_{(2)}}\geq \pi_{^{cr_k(R),2R}}^{_{(2)}}\asymp \pi_{^{r_k(R),R}}^{_{(2)}}=\cE_R,\]
   thus proving that $\cE_R\lesssim \cE_{2R}$. Our aim is to apply Proposition~\ref{prop:bootstrapping}, hence we first focus on showing that \eqref{eq:assmpt_theta_r_R} and \eqref{eq:assmpt_theta_u_R} are satisfied  with this choice of $\cE_R$. Note that our induction hypothesis already gives \eqref{eq:assmpt_theta_r_R} and \eqref{eq:assmpt_theta_u_R} on $D_k$ and $\widetilde D_k$, respectively. 
   We now prove \eqref{eq:assmpt_theta_r_R} on $D_k^{\complement}$, where   we use monotonicity of $\theta_{r,R}$ in $r$, given in Lemma~\ref{lem:monotone}, to obtain
    \begin{align*}
         \theta_{r,R}& \le \theta_{r_k(R),R}\asymp \pi_{r_k(R),R}^{(2)}\, \quad {\text{ on }D_k^{\complement}},
    \end{align*}
    where the asymptotic equivalence holds by the induction hypothesis. Finally, we prove that \eqref{eq:assmpt_theta_u_R} holds on $\widetilde{D}_k^{\complement}$. Define
    \begin{equation}\label{eq:def_uk}
         u_k(R) =\sup\{u\in (0,1]\colon s(u,R)\ge r_k(R)\}.
    \end{equation}
    Recalling that $r_k(R)\to \infty$ as $R\to \infty$ by Lemma~\ref{lem:recursive_def_rk} and~\eqref{eq:asymptotics_for_s(u,R)}, we deduce $u_k(R)\to 0$ as $R\to \infty$. By continuity of $s(u,R)$, given in Lemma~\ref{lem:utos}, we have $s(u_k(R),R)=r_k(R)$ provided $R\geq R_0$ with sufficiently large $R_0$, and hence 
    \begin{equation}\label{eq:equiv_tilDk}
    \widetilde D_k^{\complement}\cap\{R\geq R_0\}=\{u\ge u_k(R),\,R\ge R_0\}.
    \end{equation}
    We can use the monotonicity of $\theta_{(u),R}$ in $u$, given in Lemma~\ref{lem:monotone},  to obtain
    \begin{align*}
        \theta_{(u),R}& \le \theta_{(u_k(R)),R}\asymp \pi_{(u_k(R)),R}^{(2)}\asymp \pi_{s(u_k(R),R),R}^{(2)}=\pi_{r_k(R),R}^{(2)} \quad \text{ on }\widetilde D_k^{\complement},
    \end{align*}
    where the first asymptotic equivalence holds by the induction hypothesis, the second follows from Lemma~\ref{lem:utos} \eqref{asymp:s(u,R)_2e_crossing} and the fact that $\widetilde D_k^{\complement} \subseteq \widetilde D_0^{\complement}$, and the third from $s(u_k(R),R)=r_k(R)$. Combining this with the induction hypothesis, we see that~\eqref{eq:assmpt_theta_r_R} and~\eqref{eq:assmpt_theta_u_R} are satisfied with $\cE_R=\pi_{^{r_k(R),R}}^{_{(2)}}$, and hence we can apply  Proposition~\ref{prop:bootstrapping}.
    \smallskip
    
    \noindent $\blacktriangleright$ First, we prove $\theta_{r,R}\asymp \pi_{^{}{r,R}}^{_{(2)}}$ on $D_{k+1}\setminus D_k$. Proposition~\ref{prop:bootstrapping} yields
    \[
    \pi_{r,R}^{(2)}\leq\theta_{r,R}\lesssim \pi_{r,R}^{(2)}+ \pi_r\cE_{R}\quad\text{ on } D_{k+1}\setminus D_k,
    \]
    and since $r\geq r_{k+1}(R)$, we have from  \eqref{asymp:1e-crossing-probabilites} that $\pi_r\lesssim \pi_{r_{k+1}(R)}$.
    Hence, 
     \begin{align*} 
        \pi_r \cE_{R} &=\pi_r \pi_{r_k(R),R}^{_{(2)}} \lesssim \pi_{r_{k+1}(R)} \ \pi_{r_k(R),R}^{_{(2)}} = \pi_{r_{k+1}(R), R}^{_{(2)}},
    \end{align*}
    where the last equality holds by Lemma~\ref{lem:recursive_def_rk} for large enough $R$. Using the fact that $\pi_{^{r,R}}^{_{(2)}}$ is {increasing} in $r$, see Lemma~\ref{lem:monotone}, it follows that  \smash{$ \pi_{^{r_{k+1}(R), R}}^{_{(2)}} \le \pi_{r , R}^{_{(2)}}$}    and so we obtain 
    \smash{$\theta_{r,R} \asymp \pi_{^{r,R}}^{_{(2)}}$} on
   $D_{k+1}$, as required.\smallskip
   
    \noindent $\blacktriangleright$ 
Repeating the previous argument, it remains to prove $\theta_{(u),R}\asymp \pi_{^{(u),R}}^{_{(2)}}$ on $\widetilde D_{k+1}\setminus \widetilde D_k$. On this domain, Proposition~\ref{prop:bootstrapping} yields that
    \[
    \theta_{u,R}\lesssim \pi_{(u),R}^{_{(2)}} + \pi_{s,R}^{_{(2)}} + (\pi_s + \pi_{(u),s}) \cE_{R},
    \]
    where $s$ is arbitrary in $[2,R/32]$. We choose $s=s(u,R)$, which by Lemma~\ref{lem:utos} yields
    \[
    \theta_{(u),R}\lesssim  \pi_{(u),R}^{_{(2)}}+ \pi_{s(u,R)} \cE_{R} \quad \text{ on }  \widetilde D_{k+1}\setminus \widetilde D_k,
    \]
    since by Lemma~\ref{lem:utos}, the above holds on $\widetilde D_0^{\complement}$, and we have $\widetilde D_0 \subseteq \widetilde D_k$, hence $\widetilde D_{k+1}\setminus \widetilde D_k \subseteq \widetilde D_0^{\complement}$.     We then argue as before, still with $s=s(u,R)$, that
     \begin{align*}
        \pi_s \cE_{R} &=\pi_s \pi_{r_k(R),R}^{_{(2)}} \lesssim \pi_{r_{k+1}(R)} \ \pi_{r_k(R),R}^{_{(2)}} 
         = \pi_{r_{k+1}(R),R}^{_{(2)}} 
         \le \pi_{s,R}^{_{(2)}}\asymp \pi_{(u),R}^{_{(2)}},
    \end{align*}
    whence
    $
    \theta_{(u),R}\asymp \pi_{(u),R}^{_{(2)}}$
    on $\widetilde D_{k+1},
    $
    and Proposition~\ref{prop:induction_on_domain} is proved.\qedhere
\end{proof}

\section{Proof of the theorems} 
\label{sec:thms}

\subsection{Proof of Theorem~\ref{thm:two-edges}.}
In order to complete the proof of Theorem~\ref{thm:two-edges}, we must verify that, for sufficiently large $k$, the domains $D_k$ and $\widetilde D_k$ contain the desired domains of the theorem. This is established in the following lemma. 

\begin{lemma}\label{lem:ksuffices}
    There exists $k\in \N$ and $R_0\,{> 4}$ such that
    \begin{align*}
        D_k \cap\{R>R_0\}&\supset \{ \tfrac R 2 \ge r \ge R^\epsilon \}\cap\{R>R_0\},\\
        \widetilde D_k \cap\{R>R_0\}&\supset \{R^{-\epsilon}\ge u\}\cap\{R>R_0\}.
    \end{align*}
\end{lemma}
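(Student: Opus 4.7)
The plan is to quantify how fast $r_k(R)$ shrinks with $k$, showing that after finitely many iterations it falls below any prescribed power $R^\epsilon$. First, I would observe that the second inclusion reduces to the first: by Lemma~\ref{lem:utos}, on the domain $\{u \leq R^{-\epsilon}\}$ we have $s(u,R) \usll R^{c\epsilon}$ with $c = 1/d$ if $\delta \geq 1/\gamma$ and $c = \gamma\delta/d$ if $\delta < 1/\gamma$, so it suffices to establish $r_k(R) \leq R^{c\epsilon/2}$ for some finite $k$ and all large $R$. This would place any such $(u,R)$ in $\{s(u,R)\geq r_k(R)\}\subset \widetilde D_k$.

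The core of the argument is to iterate the recursion $\pi_{r_{k+1}(R),R}^{(2)} \asymp \pi_{r_{k+1}(R)}\,\pi_{r_k(R),R}^{(2)}$ of Lemma~\ref{lem:recursive_def_rk}, using the explicit asymptotics of Proposition~\ref{prop:asymp_escape_crossing_probabilities}. In each parameter regime of $(\gamma,\delta)$, and in each sub-range of $r$ where a single term dominates $\pi_{r,R}^{(2)}$, both $\pi_{r,R}^{(2)}$ and $\pi_r$ are (up to log factors) monomials in $r$. The recursion then takes the form $r_{k+1}^a \usl r_k^b$ with exponents $a > b > 0$, yielding a contraction $r_{k+1} \usl r_k^{b/a}$ with $b/a<1$. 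For instance, in the regime $2 < \delta < 1/\gamma - 1$ the asymptotics give $\pi_{r,R}^{(2)} \asymp r^d R^{d(1-\delta)}$ and $\pi_r \asymp r^{d(2-\delta)}$, so $r_{k+1} \asymp r_k^{1/(\delta-1)}$, with $1/(\delta-1)<1$ because $\delta>2$. Starting from $r_0(R)=R/32$ and iterating gives $r_k(R) \asymp R^{1/(\delta-1)^k}$. In the regime $\delta > 1/\gamma$, the analogous computation yields $r_{k+1} \usl r_k^{\gamma/(1-\gamma)}$ in the ``thick annulus'' sub-regime and $r_{k+1} \usl r_k^{\gamma^2/(1-\gamma)^2}$ in the ``thin annulus'' sub-regime, both contractions since $\gamma<1/2$. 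The intermediate regime and the borderline values $\delta = 1/\gamma$ and $\delta = 1/\gamma - 1$ are handled identically, with log factors absorbed into $\usl$.

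The main subtlety is that consecutive iterates may straddle the boundary between the two sub-ranges of $r$ on which $\pi_{r,R}^{(2)}$ has different asymptotic forms (namely $r = R^{1+\delta-1/\gamma}$ in Case~2 of Theorem~\ref{thm:main_R_alpha_R}, and $r = R(\log R)^{-1/(d(1-\gamma))}$ in Case~3). Here one argues that if the formally computed $r_{k+1}$ overshoots the boundary, one resets $r_{k+1}$ to the boundary value itself (a strict power of $R$ smaller than $r_k$) and continues; the contraction rate on the other side of the boundary is also bounded away from $1$, so the iteration keeps shrinking. I expect this bookkeeping to be the main technical obstacle, though it is essentially a case check.

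Combining the two paragraphs, I obtain $r_k(R) \usll R^{\alpha_k}$ for a sequence $\alpha_k$ converging to $0$ geometrically (the ratio $b/a$ in the relevant regime bounds the contraction of $\alpha_k$). Choosing $k$ large enough that $\alpha_k < c\epsilon/2$ and noting that the implied log factor is eventually negligible against the polynomial gain $R^{c\epsilon/2 - \alpha_k}$, one concludes $r_k(R) \leq R^{c\epsilon}$, and in particular $r_k(R) \leq R^\epsilon$, for all $R \geq R_0$ with $R_0$ sufficiently large. This yields both inclusions and completes the proof of the lemma.
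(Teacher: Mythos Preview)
Your approach is essentially the same as the paper's: establish $r_k(R)\usl R^{\alpha_k}$ with $\alpha_k\to 0$ by iterating the recursion of Lemma~\ref{lem:recursive_def_rk} through the $(\gamma,\delta)$-regimes of Proposition~\ref{prop:asymp_escape_crossing_probabilities}, and then read off both inclusions. One slip to fix: in reducing the $\widetilde D_k$ inclusion you need the \emph{lower} bound on $s(u,R)$ from Lemma~\ref{lem:utos} (that is, $R^{c\epsilon}\usll s(u,R)$ on $\{u\le R^{-\epsilon}\}\cap\widetilde D_0^\complement$, with nothing to prove on $\widetilde D_0$), not the upper bound $s(u,R)\usll R^{c\epsilon}$ you wrote; once this inequality is reversed your conclusion $s(u,R)\ge r_k(R)$ follows exactly as you argue.
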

\noindent
Theorem~\ref{thm:two-edges} follows by combining Proposition~\ref{prop:induction_on_domain}, Lemma~\ref{lem:ksuffices} choosing $k$ sufficiently large.
\begin{proof}[Proof of Lemma~\ref{lem:ksuffices}]
    We show that the sequence $(r_k(R))_{k\geq 0}$ defined by~\eqref{eq:radius_r_kplus1} satisfies
    \begin{align}\label{eq:rkalpha}
        r_k(R)\usl R^{\alpha_k},
    \end{align}
    where $(\alpha_k)_{k\geq 0}$ is a sequence of positive reals depending on $\gamma$ and $\delta$, that satisfies 
    $\alpha_0=1$ and $\alpha_k\to 0$, and where we recall the notation
    {$\usl$} from \eqref{def:usll}. \smallskip
    
    The result for $D_k\cap\{R\geq R_0\}$ then follows 
    from the above as \eqref{eq:rkalpha} implies that there exists constants $C>0$ and $a\geq 0$ such that, for sufficiently large~$R$ and $k$,
    \begin{equation}\label{eq: rk_larger}r_k(R) \geq C R^{\alpha_k}(\log R)^a \geq R^{\epsilon},\end{equation} since $\epsilon>0$ is fixed. To obtain the result for $\widetilde{D}_k\cap\{R\geq R_0\}$, we can deduce 
    \[\widetilde D_k\cap\{R\geq R_0\}=\{u\le u_k(R),\,R\ge R_0\}\]
    analogously to \eqref{eq:equiv_tilDk}, where $u_k$ is defined in \eqref{eq:def_uk}, and satisfies $s(u_k(R),R)=r_k(R)$ for $R$ large. The result then follows from Lemma~\ref{lem:utos} \eqref{eq:asymptotics_for_s(u,R)} together with \eqref{eq: rk_larger}. We divide the proof of \eqref{eq:rkalpha} according to the values of the model parameters $\gamma$ and $\delta$, moving from the easier to the more involved cases.\smallskip

    \noindent$\bullet$ {\textbf{ Case $2<\delta\le \tfrac 1 \gamma-1:$}}
    Consider first the case $\delta<\tfrac 1 \gamma-1$ and recall from Proposition~\ref{prop:asymp_escape_crossing_probabilities} the asymptotics
\begin{align}\label{asymp_two_edges_deltasmall} 
        \pi_{r}&\asymp r^{d(2-\delta)}, \qquad
        \pi_{r,R}^{_{(2)}} \asymp r^{d} R^{d(1-\delta)}.
    \end{align}
    Therefore, 
    \begin{align*}
        \pi_{r_{k+1}(R), R}^{_{(2)}} & \asymp r_{k+1}(R)^d R^{d(1-\delta)} \,\,\text{ and } \,\, \pi_{r_{k+1}(R)}\pi^{_{(2)}}_{r_k(R), R}\asymp r_{k+1}(R)^{d(2-\delta)}r_k(R)^d R^{d(1-\delta)}.
    \end{align*}
    Combining the above with Lemma~\ref{lem:recursive_def_rk}\,\eqref{eq:recursive_def_rk} yields $r_{k+1}(R) \asymp r_k(R)^{1/(\delta - 1)}$ and by recursively applying this, we obtain 
    $$r_k(R) \asymp r_0(R)^{1/(\delta -1)^k}\asymp R^{1/(\delta -1)^k}.$$
    Hence  {$\alpha_k= 1/ {(\delta-1)^k} \to 0.$}
    For  \smash{$\delta= \tfrac 1 \gamma-1$}, {recall from Corollary~\ref{cor:2e-probabilities-boundarycases} that} the asymptotics~\eqref{asymp_two_edges_deltasmall} stay valid on the domain
    \[
    \{(\log R)^{\frac \delta {\delta-1}} \leq r^d\},
    \]
    from which we deduce again the same asymptotics for the sequence $(r_k(R))_{k\geq 0}$.
\smallskip

    \noindent$\bullet$ {\textbf{ Case $\delta\ge \tfrac 1 \gamma:$}}
    The precise asymptotics for $\pi_{r}$ and $\pi_{^{r,R}}^{_{(2)}}$ in Proposition~\ref{prop:asymp_escape_crossing_probabilities} are more involved in this case. But ignoring precise log factors, we can summarize these as   
    \[\pi_{r} \usl r^{d(2-\frac{1}{\gamma})}, \qquad
    \pi_{r,R}^{_{(2)}} \usl r^{d\gamma} R^{d(2-\gamma-\tfrac 1 \gamma)}.
    \]
 Therefore, 
    \begin{align*}
        \pi_{r_{k+1}(R),R}^{_{(2)}} & \usl r_{k+1}(R)^{d\gamma}R^{d(2-\gamma - 1/\gamma)}, \text{ and}\\
        \pi_{r_{k+1}(R)} \pi_{r_k(R),R}^{_{(2)}} & \usl r_{k+1}(R)^{d(2-\frac{1}{\gamma})}r_k(R)^{d\gamma}R^{d(2-\gamma - \frac{1}{\gamma})}.
    \end{align*}
    Similarly to the first case, we combine the above with Lemma~\ref{lem:recursive_def_rk} to obtain 
    \begin{align*}
        r_{k+1}(R)\usl r_k^{\left(\gamma/(1-\gamma)\right)^2} \usl R^{\alpha_k},
    \text{ with $\alpha_k = \big(\tfrac{\gamma}{1-\gamma } \big)^{2k}$. }
    \end{align*}
    As $\gamma < 1/2$, we have \smash{$\frac{\gamma}{1-\gamma}<1$} and hence $\alpha_k \to 0$, concluding this case. \smallskip
    

    \noindent$\bullet$ {\textbf{ Case $2 \vee (\tfrac 1 \gamma-1)<\delta <\tfrac 1 \gamma:$}} 
    Proposition~\ref{prop:asymp_escape_crossing_probabilities} now provides the asymptotics
    \begin{align*}
        \nonumber \pi_{r}&\asymp r^{d(2-\delta)}\\
        \label{asymp_two_edges_deltasmall} \pi_{r,R}^{_{(2)}}&\asymp 
        \begin{cases}
                    r^{\frac d \delta} R^{d(1+\frac 1{\gamma \delta}-\frac 1 \gamma-\frac 1 {\delta})} & \text{ on } 
                    \{  r \leq R^{1+\delta-\frac 1 \gamma}\},\\
                    r^d R^{d(1-\delta)} & \text{ on } \{  R^{1+\delta-\frac 1 \gamma} \leq r \leq R/2\}.
                \end{cases}
    \end{align*}
    We use this to obtain recursively $r_k(R)\asymp R^{\alpha_k}$, however the recursive definition of the sequence $(\alpha_k)$ now differs when this exponent crosses the value \smash{$1+\delta-\frac 1 \gamma.$} More precisely,
    \begin{itemize}
        \item[$\blacktriangleright$] For $k=1, \ldots, k_0$, with \smash{$k_0:= \max \big\{k\ge 0\colon\ \frac 1 {(\delta-1)^k} \ge 1+ \delta-\frac 1 \gamma\big\},$} we have
   \smash{$\alpha_k= \frac 1 {(\delta-1)^k}.$}
        \item[$\blacktriangleright$] Then $\alpha_{k_0+1}\in (0,\ 1+ \delta-\frac 1 \gamma).$
        \item[$\blacktriangleright$] Finally for $k\ge k_0+1$,
        \[
        \alpha_{k+1}= \frac {\alpha_k}{(\delta-1)^2}= \frac {\alpha_{k_0+1}}{(\delta-1)^{2(k-k_0)}}.
        \]
    \end{itemize}
    Thus $\alpha_k$ tends to 0, concluding this case.
\end{proof}

\subsection{Proof of Theorems~\ref{thm:main_R_alpha_R} and~\ref{thm:main_u_R}.}
Theorems~\ref{thm:main_R_alpha_R} and~\ref{thm:main_u_R} 
follow from Theorem~\ref{thm:two-edges} and the expressions obtained for $\pi_{{r,R}}, \pi^{_{(2)}}_{^{r,R}}$
and $\pi_{{(u),R}}, \pi^{_{(2)}}_{^{(u),R}}$ in 
Proposition~\ref{prop:asymp_escape_crossing_probabilities} by identifying the domain in which the two-edge probabilities dominate the one-edge probabilities. 

\subsection{Proof of Theorem~\ref{thm:subcritical_one_arm_exponent}.}

We assume that $\mathbb P^*$ is given by adding the point $(0,U)$ to the vertex set $\mathcal P$. 
We use the computations already made in Section~\ref{sec:one_two_crossing_bounds} to deduce
\begin{align*}
    \mathbb P^*\big( (0,U) \simtwo B(0,R)^\complement \big)
    &= \int_0^1 \pi_{(u),R}^{_{(2)}} \de u \asymp \pi_{(1),R}^{_{(2)}}
    = h(R).
\end{align*}
We deduce  the lower bounds in Theorem~\ref{thm:subcritical_one_arm_exponent} from Table~\ref{tab:gandh}, but in fact this yields lower bounds with up-to-constants precision. We conjecture that these bounds are sharp.
\smallskip


For the upper bound we use 
\begin{align*}
\mathbb P^*\big( (0,U) \longleftrightarrow B(0,R)^\complement \big) &
\le \int_0^{R^{-\epsilon}} \mathbb P\big( (0,u) \leftrightarrow B(0,R)^\complement \big) \, du+ 
\mathbb P\big(  (0,R^{-\epsilon}) \leftrightarrow B(0,R)^\complement \big)\\
& \asymp 
\int_0^{R^{-\epsilon}} \pi^{_{(2)}}_{(u),R} \, du+ 
\pi^{_{(2)}}_{(R^{-\epsilon}),R}.
\end{align*}
Using that
$\int  \mathbb P ( (0,1) \sim (x,v)) \,  dx$
is uniformly bounded from below and
Lemma~\ref{lem:integral_edge_crossing} we see
\begin{align*}
\int_0^{R^{-\epsilon}} \pi_{(u),R}^{_{(2)}} \, du 
& \lesssim  \int_0^1 \int  \mathbb P \big( (0,1) \sim (x,v) \big) \, \de x\,\pi_{(v),R}^{_{(2)}} \, \de v  \lesssim \pi_{(1),R}^{_{(2)}} =h(R),
\end{align*}
which we get again from Table~\ref{tab:gandh}.
By Proposition~\ref{prop:asymp_escape_crossing_probabilities} we have
$$\pi^{_{(2)}}_{(R^{-\epsilon}), R}
\lesssim \pi_{(R^{-\epsilon}), R} + R^{\epsilon\gamma} h(R) \wedge 1,
\text{ and }
\pi_{(R^{-\epsilon}), R}  \lesssim R^\epsilon g(R),$$
so that the upper bound follows from Table~\ref{tab:gandh} 
by choosing a small $\epsilon>0$.\bigskip

\emph{Acknowledgment:}
AL was supported by the ANID/FONDECYT regular grant 1252012. EJ was supported by grant GrHyDy ANR-20-CE40-0002.
CK and PM are supported by DFG project 444092244 ``Condensation in random geometric graphs" within the priority programme SPP~2265. This material is further based upon work supported by the National Science Foundation under Grant No. DMS-1928930, while CK and PM were in residence at the Simons Laufer Mathematical Sciences Institute in Berkeley, California, during the spring semester of 2025. 

\bibliographystyle{alpha}
\bibliography{citation}
\end{document}